\newtheorem{theorem}{Theorem}[section]
\newtheorem{lemma}[theorem]{Lemma}
\newtheorem{corollary}[theorem]{Corollary}
\newtheorem{proposition}[theorem]{Proposition}
\newtheorem{claim}[theorem]{Claim}
\theoremstyle{definition}
\theoremstyle{remark}
\newtheorem{remark}{Remark}
\def\cB{\mathcal{B}}
\def\cP{\mathcal{P}}
\def\cS{\mathcal{S}}
\def\tw{\widetilde{w}}
\def\hQ{\widehat{Q}}
\def\tse{t^{\searrow}}
\def\tnw{t^{\nwarrow}}
\def\opi{\overline{\pi}}
\def\bx{\bar{x}}
\def\by{\bar{y}}
\begin{document}

 \title[On the enumeration of plane bipolar posets and transversal structures]{On the enumeration of plane bipolar posets and transversal structures}



\author[\'E. Fusy]{\'Eric Fusy}
\address{\'E.F.: CNRS/LIGM, UMR 8049, Universit\'e Gustave Eiffel, Marne-la-vall\'ee, France.}
\email{eric.fusy@u-pem.fr}

\author[E. Narmanli]{Erkan Narmanli}
\address{E.N.: LIX, \'Ecole Polytechnique, Palaiseau, France.}
\email{erkan.narmanli@normalesup.org}

\author[G. Schaeffer]{Gilles Schaeffer}
\address{G.S.: CNRS/LIX, UMR 7161, \'Ecole Polytechnique, Palaiseau, France.}
\email{schaeffe@lix.polytechnique.fr}



\maketitle

\begin{abstract}
We show that plane bipolar posets (i.e., plane bipolar orientations with no transitive edge) and transversal structures can be set in correspondence to certain (weighted) models of quadrant walks, via suitable specializations of a bijection due to Kenyon, Miller, Sheffield and Wilson. 
We then derive exact and asymptotic counting results. 
In particular we prove (computationally and then bijectively) that the number of plane bipolar posets on $n+2$ vertices equals the number of plane permutations (i.e., avoiding the vincular pattern $2\underbracket[.5pt][1pt]{14}3$) of size $n$. 
 Regarding transversal structures, for each $v\geq 0$ we consider $t_n(v)$ the number of such structures with $n+4$ vertices and weight $v$ per quadrangular inner face 
 (the case $v=0$ corresponds to having only triangular inner faces). We obtain a recurrence to compute $t_n(v)$, and an asymptotic formula that for $v=0$ 
 gives $t_n(0)\sim c\ \!(27/2)^nn^{-1-\pi/\mathrm{arccos}(7/8)}$ for some $c>0$, which also ensures  that the associated generating function is not D-finite.
\end{abstract}


\section{Introduction}
The combinatorics of planar maps (i.e., planar multigraphs endowed with an embedding on the sphere) 
has been a very active research topic ever since the early works of W.T. Tutte~\cite{Tutte63}. In the last few years, after tremendous progresses on the enumerative and probabilistic theory of maps~\cite{Bous11,bona15,ey16,Mi14}, the focus has started to shift to planar maps endowed with \emph{constrained orientations}. Indeed constrained orientations capture a rich variety of models~\cite{felsner2004lattice,eppsteinbook} with  connections to (among others) graph drawing~\cite{schnyder1990embedding,bonichon2007convex}, pattern-avoiding permutations~\cite{bonichon2010baxter,reading2012generic} and the study of their permuton limits~\cite{borga2020scaling,borga2021skew,borga2021permuton}, 
Liouville quantum gravity~\cite{li2017schnyder}, or theoretical physics~\cite{loll2015locally}. From an enumerative perspective, these new families of maps are expected to depart (e.g. \cite{felsner2010asymptotic,elveyprice:hal-01738160}) from the usual \emph{algebraic generating function} pattern followed by many families of planar maps with local constraints~\cite{schaeffer-book}. From a probabilistic point of view, they lead to new models of random graphs and surfaces, as opposed to the universal Brownian map limit capturing earlier models. Both phenomena are first witnessed by the appearance of new critical exponents $\alpha\neq5/2$ in the generic $\gamma^nn^{-\alpha}$ asymptotic formulas for the number of maps of size~$n$, e.g. the critical exponent $\alpha=4$ holds for plane bipolar orientations~\cite{kenyon2019bipolar,bousquet2020plane}, whereas the exponent $\alpha=5/2$ is known to be universal~\cite{drmota2022universal} for undecorated map enumeration.

A fruitful approach to oriented planar maps is through bijections (e.g.~\cite{BERNARDI200955}) with walks with a specific step-set in the quadrant, or in a cone, up to shear transformations. We rely here on a recent such bijection due to Kenyon, Miller, Sheffield, and Wilson~\cite{kenyon2019bipolar}, 
hereafter referred to as the KMSW bijection, that encodes plane bipolar orientations by certain quadrant walks called \emph{tandem walks}, and that was recently used in the article~\cite{bousquet2020plane} to obtain counting formulas for plane bipolar orientations with control on the face-degrees. 
Precisely, we will exploit the KMSW bijection to study the enumerative properties of two families of oriented planar maps. On one hand, \emph{plane bipolar posets}, which are plane bipolar orientations with no transitive edges. These also correspond to so-called \emph{planar lattices} (i.e., lattices admitting a planar upward drawing) equipped with a planar embedding so that the min and max are both in the outer face.  Planar lattices have been extensively studied, in particular a classical result is that they are exactly the lattices of Dushnik-Miller dimension at most $2$ (see~\cite{kelly1982dimension} and references therein), and an analog of the Kuratowski theorem characterizing planarity of lattices has also been established~\cite{kelly1975planar}.   On the other hand, \emph{transversal structures}~\cite{he1993finding,kant1997regular,fusy2009transversal} (also known as regular edge labelings) are specific partitions of the inner edges of maps (with a quadrangular outer face) into two plane bipolar posets that cross at any inner vertex. These are the topological embedded structures dual to rectangular tilings (as illustrated in Figures~\ref{fig:rectangular} and~\ref{fig:rectangular_quad}).  

\medskip

\noindent{{\bf Overview of the results}.} We show in Section~\ref{sec:corresp} that the KMSW bijection can be adapted to plane bipolar posets and transversal structures, by a suitable reduction of these models to plane bipolar orientations with some decorations on the faces.    
Building on these specializations, in Section~\ref{sec:exac} we obtain exact enumeration results. In particular, we show that the number $b_n$ of plane bipolar posets on $n+2$ vertices is equal to the number of plane permutations of size $n$ introduced in~\cite{bousquet2007forest} and recently further studied in~\cite{bouvel2018semi}, and that a reduction to small-steps quadrant walks models (which makes coefficient computation faster) can be performed for the number $e_n$ of plane bipolar posets with $n$ edges, and for the number $t_n(v)$ of transversal structures on $n+4$ vertices
with weight $v$ per quadrangular inner face. 
 In Section~\ref{sec:asymptotic} we then obtain asymptotic formulas for the coefficients $b_n,e_n,t_n(v)$ (with $v\geq 0$), 
 all of the form $c\gamma^nn^{-\alpha}$ with $c>0$ and with $\gamma,\alpha\neq5/2$ explicit. Using the approach of~\cite{bostan2012} we then deduce from these estimates that the generating functions for $e_n, t_n(0), t_n(1)$ are not D-finite. We also briefly explain that, based on our asymptotic estimates on $t_n(v)$, random 
 transversal structures on $n+4$ vertices with weight $v$ 
 per quadrangular face can be proposed as a model that interpolates between a random lattice (in the regime $v=\Theta(1)$) and a regular lattice (in the regime $v\to\infty$).  
 Finally, in Section~\ref{sec:bijec} we provide a direct bijection between plane permutations of size $n$ and plane bipolar posets with $n+2$ vertices, which is similar to the one~\cite{bonichon2010baxter} between Baxter permutations and plane bipolar orientations.

\section{Oriented planar maps and tandem walks in the quadrant}\label{sec:corresp}
After general definitions and properties on plane bipolar orientations in Section~\ref{sec:bipo}, 
we recall in Section~\ref{sec:KMSW} the KMSW bijection, which encodes plane bipolar orientations by certain quadrant walks, while controlling the number of edges, vertices, and faces of each type.
Then, respectively in Section~\ref{sec:appli_KMSW} and Section~\ref{sec:appli_ts}, 
we show how the bijection can be applied
to plane bipolar posets (in two different ways) and to transversal structures.

\subsection{Plane bipolar orientations}\label{sec:bipo}
A \emph{planar map} (we refer to the survey~\cite{schaeffer-book}) 
is a connected multigraph embedded on the oriented sphere. A \emph{rooted planar map} is a planar map with a marked corner, called its \emph{root}.  
The \emph{root vertex} (resp. \emph{root face}) is the vertex (resp. face) incident to the root,
the root face is taken as the unbounded  face in planar representations (obtained by a projection from the root face). 

In an orientation of a planar map, a corner $c=(v,e_1,e_2)$ is called \emph{lateral} if one of $e_1,e_2$ is ingoing at $v$ and the other one is outgoing at $v$, it is called \emph{extremal} otherwise, i.e., if either $e_1,e_2$ are both ingoing or both outgoing at $v$. 

A \emph{plane bipolar orientation} is a rooted planar map endowed with an acyclic orientation with a unique source $S$ at the root-vertex, and a unique sink $N$ incident to the outer face. The vertices $S,N$ are called the \emph{poles} of the orientation.  
It is known~\cite{de1995bipolar} that a plane bipolar orientation is characterized by the following local properties (for orientations with $S$ as a  
source and $N$ as a sink), illustrated in Figure~\ref{fig:bipolar_rules}:
\begin{itemize} 
\item[(B):] Each non-pole vertex has two lateral corners (so the incident edges form two groups: ingoing and outgoing edges).
\item[(B'):] Each face $f$ (including the outer one) has two extremal corners, so that the contour is partitioned into a left lateral path $L_f$ and a right lateral path $R_f$ that share their origins and ends, which are called the \emph{bottom vertex} and \emph{top vertex} of the face.
\end{itemize}
The \emph{type} of a face $f$  is the pair $(i,j)$ where $i+1$ (resp. $j+1$) is the length of $L_f$ (resp. $R_f$). The \emph{outer type} of a plane bipolar orientation 
is the type of the outer face, while the \emph{pole-type} is the pair $(p,q)$ such that $p+1$ is the degree of $S$ and $q+1$ is the degree of~$N$.

\begin{figure}
\begin{center}
\includegraphics[width=0.8\textwidth]{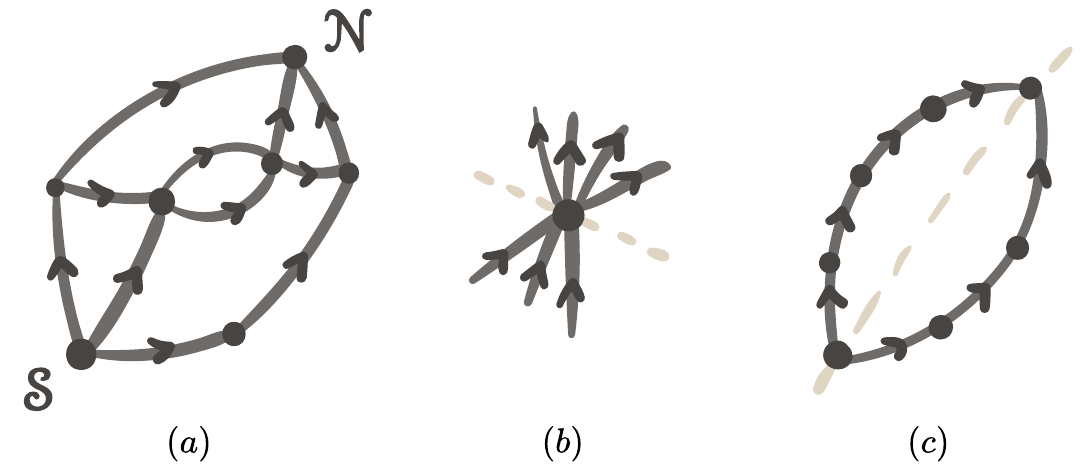}
\end{center}
\caption{(a) A plane bipolar orientation. (b) The local conditions at non-pole vertices. (c) The local condition at inner faces.}
\label{fig:bipolar_rules}
\end{figure}

\begin{figure}
\begin{center}
\includegraphics[width=\textwidth]{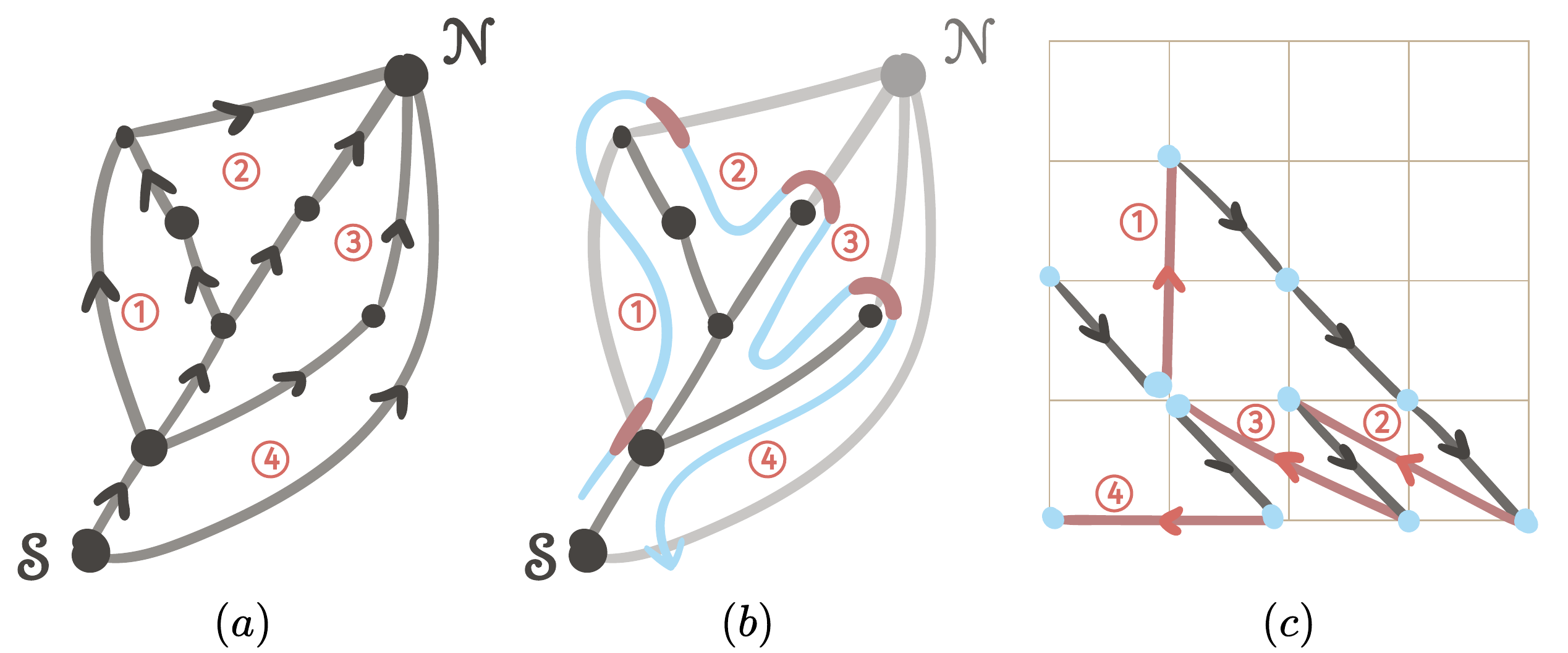}
\end{center}
\caption{A plane bipolar orientation of outer type $(2,0)$, and the corresponding quadrant tandem walk from $(0,2)$ to $(0,0)$ under the KMSW bijection.}
\label{fig:ex_kmsw}
\end{figure}
 
\subsection{The KMSW bijection}\label{sec:KMSW}

A \emph{tandem walk} is a walk on $\mathbb{Z}^2$ with steps in~$\{(1,-1)\}\cup\{(-i,j), i,j\geq 0\}$; it is a \emph{quadrant} walk if it stays in $\mathbb{N}^2$ all along. 
We use the usual terminology\footnote{Similarly later on, we will usually use compass notation for short steps.} of SE steps for the steps $(1,-1)$.  
On the other hand, every step $(-i,j)$ in such a walk is called a \emph{face-step}. 
The \emph{KMSW bijection} maps a plane bipolar orientation $B$ to a tandem walk staying in the quadrant.   
Letting $(a,b)$ be the outer type of $B$, the tandem walk starts at $(0,a)$ and is drawn step by step while traversing $B$, by the following procedure. 
We consider the \emph{rightmost ingoing tree} for $B$, i.e., the tree $T$ spanning all vertices except $N$ and rooted at $S$, where the parent-edge of every non-pole vertex
is its rightmost ingoing edge. Then, during a clockwise walk  around $T$ starting at the root (at $S$),  we draw a SE step when walking along a tree-edge away from the root,  
and we draw a face-step $(-i,j)$ when first entering an inner face of type $(i,j)$, see Figure~\ref{fig:ex_kmsw} for an example. 

\begin{theorem}[\cite{kenyon2019bipolar}]\label{theo:KMSW}
The KMSW bijection is a bijection between plane bipolar orientations of outer type $(a,b)$ with $n+1$ edges, and quadrant tandem walks of length $n$ from $(0,a)$ to $(b,0)$. Every non-pole vertex corresponds to a SE step, and every inner face of type $(i,j)$ corresponds to a face-step $(-i,j)$. 
\end{theorem}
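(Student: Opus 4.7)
The plan is to construct the bijection explicitly, defining a forward map $\Phi$ from plane bipolar orientations to quadrant tandem walks and an inverse map $\Psi$, then checking they are mutual inverses.

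For the forward map, given a plane bipolar orientation $B$ of outer type $(a,b)$, I would first form its \emph{rightmost ingoing tree} $T$: keep, at each non-source vertex, only its rightmost ingoing edge. A standard argument for bipolar orientations shows $T$ is a spanning tree. Then, after removing $N$, perform a clockwise contour traversal of $T$ starting at $S$ (entering via $S$'s leftmost outgoing edge on the outer face). During this traversal, emit a step of the walk each time one discovers either a non-pole vertex (emit $SE$) or an inner face (emit the corresponding face-step of type $(i,j)$). Euler's formula $V+F = E+2$ immediately gives total length $(V-2)+(F-1)=n$, and the emission rule ensures the claimed correspondence between non-pole vertices / inner faces and walk-steps.

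The quadrant condition is the main bookkeeping step. I would give a combinatorial interpretation of the running coordinates $(x_k,y_k)$ at time $k$ in terms of a \emph{frontier} separating processed and unprocessed parts of $B$: $x_k$ counts the non-tree edges crossing the frontier on one side, and $y_k$ counts pending outgoing edges at the currently exposed vertex (or, equivalently, edges remaining on the right lateral path of the currently open face). Using the local rules (B) and (B'), a $SE$-step corresponds to moving past a vertex (one ingoing absorbed, one outgoing exposed, so $x$ increases and $y$ decreases), and a face-step of type $(-i,j)$ corresponds to completing a face (consuming $i$ units from $y$ and contributing $j$ units to $x$ via the right lateral path). Nonnegativity follows since at each stage there are genuinely that many frontier edges present. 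The starting point $(0,a)$ and ending point $(b,0)$ then fall out of the outer-type conventions.

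For the inverse map $\Psi$, I would read the walk left-to-right while maintaining the same frontier. A $SE$-step attaches a fresh vertex with the prescribed ingoing/outgoing edges on the frontier; a face-step of type $(-i,j)$ closes off a face by identifying $i+1$ consecutive frontier edges as its left lateral path and opening a new right lateral path of length $j+1$. The quadrant condition guarantees the required number of frontier edges is always available. The composition identities $\Phi\circ\Psi=\mathrm{id}$ and $\Psi\circ\Phi=\mathrm{id}$ then follow from the step-by-step nature of both maps.

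The main obstacle will be verifying that $\Psi$ always outputs a bona fide plane bipolar orientation — in particular, that the resulting orientation is acyclic, that it has a unique source and unique sink, and that its outer type matches $(a,b)$ — and that the rightmost ingoing tree of $\Psi(w)$ is exactly the tree traced out during the frontier evolution, so that $\Phi(\Psi(w))=w$. Once the frontier formalism is cleanly set up and shown to be consistent with rules (B) and (B'), the rest is a careful induction on the walk length.
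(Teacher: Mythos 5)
This theorem is imported from~\cite{kenyon2019bipolar}; the paper gives no proof of it, only a one-sentence description of the forward map in the caption of Figure~\ref{fig:ex_kmsw} (rightmost ingoing tree, clockwise contour, an $SE$ step at each first visit of a tree edge, a face-step at each first entry into an inner face). Your forward map $\Phi$ coincides with that description, your length count via Euler's formula is right, and the ``read the walk left-to-right while growing a frontier'' strategy for $\Psi$ is indeed how the bijection is established in \cite{kenyon2019bipolar} and in the follow-up work \cite{bousquet2020plane}. So the route is the standard one; as a proof it is still an outline, since the hard verifications you yourself flag (that $\Psi(w)$ is acyclic with unique source and sink, and that the frontier tree of $\Psi(w)$ is the rightmost ingoing tree) are exactly where the work lies.

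One concrete error to fix before the bookkeeping can be carried out: your two step descriptions are mutually inconsistent with the step set. You say an $SE$ step makes $x$ increase and $y$ decrease (correct, $SE=(1,-1)$), but then say a face-step of type $(i,j)$ ``consumes $i$ units from $y$ and contributes $j$ units to $x$'', i.e.\ acts as $(+j,-i)$, whereas the step is $(-i,+j)$: under the convention fixed by your $SE$ description, completing a face of type $(i,j)$ must \emph{decrease} $x$ by $i$ (absorbing the left lateral path, of length $i+1$) and \emph{increase} $y$ by $j$ (exposing the right lateral path, of length $j+1$). Since this sign convention is precisely what yields the quadrant condition, the matching of types, and the endpoints $(0,a)\to(b,0)$, the frontier invariant needs to be restated consistently (e.g.\ in the form used in \cite{kenyon2019bipolar}: at each time the current position records the number of edges below the current edge on the left lateral path of the face to its right, and the number of edges above it on the right lateral path of the face to its left, both manifestly nonnegative).
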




\subsection{Application to plane bipolar posets}\label{sec:appli_KMSW}

For $B$ a plane bipolar orientation, an edge $e=(u,v)\in B$ is called \emph{transitive} if there is a path from $u$ to $v$ avoiding $e$.
If $B$ has no transitive edge it is called a \emph{plane bipolar poset}. 

\begin{claim}\label{claim:tra}
Let $B$ be a plane bipolar orientation. Then $B$ is a plane bipolar poset iff for each inner face, its type $(i,j)$ satisfies $i\geq 1$ and $j\geq 1$. 
\end{claim}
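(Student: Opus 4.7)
The plan is to prove both directions of the iff by contraposition.

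For the easy direction, suppose an inner face $f$ has type $(0,j)$. Then its left lateral path $L_f$ consists of a single edge $e = (\mathrm{bot}(f), \mathrm{top}(f))$, and the right lateral path $R_f$ is a directed path between the same endpoints of length $j+1 \ge 1$ that does not use $e$ (either a parallel edge when $j = 0$ or a longer path when $j \ge 1$); so $e$ is transitive. The case of type $(i,0)$ is symmetric.

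For the hard direction, suppose $e = (u,v)$ is transitive, and take a simple directed path $P$ from $u$ to $v$ avoiding $e$ (extracted from any such path by cycle removal). Then $e \cup P$ is a Jordan curve whose complement has two components; the one not containing the outer face is bounded. Let $f$ be the face adjacent to $e$ in this bounded component, so $f$ is inner; after possibly swapping left and right, I may assume $e \in L_f$. My goal is to show $L_f = \{e\}$, which will force $f$ to have type $(0, j)$. Suppose for contradiction that $L_f$ contains an edge $e'$ entering $u$ just before $e$. Then $f$ lies on the right of both $e'$ and $e$, so the corner of $u$ between $e'$ (incoming) and $e$ (outgoing) is a lateral corner of $u$ incident to $f$. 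By property~(B), at $u$ the incoming and outgoing edges form two contiguous cyclic arcs separated by the two lateral corners, one on each side of $u$; the lateral corner we just identified is therefore the one on the $f$-side, which forces $e$ to be the outgoing edge adjacent to it, i.e.\ the $f$-side-most outgoing edge at $u$. But then the first edge of $P$, which must leave $u$ on the $f$-side of $e$ so that $P$ bounds the disk containing $f$ on that side, has no outgoing edge available past $e$, a contradiction. The symmetric argument at $v$ rules out an edge of $L_f$ leaving $v$ after $e$, so $u = \mathrm{bot}(f)$ and $v = \mathrm{top}(f)$, whence $L_f = \{e\}$.

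The main care is needed in the topological/cyclic setup: in making the ``WLOG $P$ is on the right of $e$'' step rigorous (especially when $e$ lies on the outer boundary, so one of its two adjacent faces is the outer face), and in identifying the corner between $e'$ and $e$ with the correct lateral corner of $u$. Once property~(B) is unpacked as ``the two lateral corners of a non-pole vertex separate the contiguous cyclic arcs of incoming and outgoing edges, with one lateral corner on each side,'' both the local argument and the final contradiction become short.
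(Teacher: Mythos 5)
Your forward direction is fine and matches the paper's. In the converse direction you take a different route from the paper (which chooses $P$ so that the region enclosed by $P+e$ is \emph{minimal}, and argues that this region is then a single face with $e$ as one lateral path), and your route has a gap at the key step. Having supposed that $L_f$ contains an edge $e'$ entering $u$ just before $e$, you conclude from property (B) that $e$ is the extreme outgoing edge on the $f$-side at $u$, and that the first edge $e_1$ of $P$ therefore ``has no outgoing edge available past $e$.'' But ``past $e$ on the $f$-side'' refers to an arc of a \emph{cyclic} order: the sector of the disk $D$ at $u$, which runs from $e$ to $e_1$ and contains the corner of $f$, could a priori contain the whole contiguous block of ingoing edges (starting with $e'$), then the \emph{second} lateral corner of $u$, and only then reach the outgoing edge $e_1$. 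Property (B) does not exclude this: it is realized locally by any vertex whose ingoing block sits between two of its outgoing edges in the cyclic order. So no contradiction is obtained purely from the local condition at $u$ (and the symmetric step at $v$ has the same problem).

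What actually rules out the configuration is a global argument that you do not make: an ingoing edge $e'=(w,u)$ lying in the $D$-sector at $u$ has $w\in\overline{D}$; following ingoing edges backwards from $w$, one stays in $\overline{D}$ (edges cannot cross $\partial D=P+e$) and cannot terminate at $S$, since $S$ is incident to the outer face and hence not interior to $D$; so the backward path must reach a vertex of $P\cup\{u,v\}$, every one of which is reachable from $u$ along $P+e$, producing a directed cycle. Hence no edge enters $u$ from inside $D$ (and, symmetrically, no edge leaves $v$ into $D$), which is what forces the lateral path of $f$ containing $e$ to be $\{e\}$. Either supply this acyclicity argument, or follow the paper and take $P$ enclosing a minimal region, in which case minimality directly yields that the interior of $P+e$ is a single face having $e$ as a lateral path.
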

\begin{proof}
Assume $B$ has such an inner face $f$, and w.l.o.g. assume that $L_f$ has length $1$. Then $L_f$ is an edge $e=(u,v)$, and it is transitive since $R_f$ avoids $e$ and goes from $u$ to $v$. Conversely, assume $B$ has a transitive edge $e=(u,v)$, and let $P\neq{e}$ be a path from $u$ to $v$ such that the region enclosed by $P+e$ is minimal. Then, by minimality, the interior of $P+e$ has to be a face, which has $e$ as one of its lateral paths. 
\end{proof}

A tandem walk is called \emph{$E$-admissible} if every face-step $(-i,j)$ has $i\geq 1$ and $j\geq 1$. 

\begin{proposition}\label{prop:E-admissible}
The KMSW bijection specializes into a bijection between plane bipolar posets of outer type $(a,b)$, and $E$-admissible  tandem walks from $(0,a)$ to $(b,0)$ in the quadrant. 
\end{proposition}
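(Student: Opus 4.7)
The plan is to derive the statement as an essentially immediate consequence of the KMSW bijection combined with Claim~\ref{claim:tra}. The KMSW bijection already transports plane bipolar orientations of outer type $(a,b)$ with $n+1$ edges bijectively onto quadrant tandem walks of length $n$ from $(0,a)$ to $(b,0)$, and crucially it sends each inner face of type $(i,j)$ to a face-step of the same type $(i,j)$. The work is therefore to check that the subfamily of plane bipolar posets on the left is mapped exactly onto the subfamily of $E$-admissible walks on the right.

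First I would note the general principle that, under the KMSW bijection, any subfamily of plane bipolar orientations singled out by a constraint on the multiset of inner-face types corresponds to the subfamily of tandem walks singled out by the same constraint on the multiset of face-step types; this requires no further argument because inner faces and face-steps are identified by the bijection and their types are preserved. Then I would invoke Claim~\ref{claim:tra}: a plane bipolar orientation $B$ is a plane bipolar poset if and only if every inner face of $B$ has type $(i,j)$ with $i\geq 1$ and $j\geq 1$. On the walks side, this is literally the definition of $E$-admissibility, so the two restrictions correspond to one another.

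There is no real obstacle to overcome: Claim~\ref{claim:tra} has already done the substantive work of translating the absence of transitive edges into a local face-type condition, and the KMSW theorem supplies the bijection itself. The only two additional things I would explicitly record are that the outer type $(a,b)$ and the edge count $n+1$ (hence the walk length $n$ and its endpoints $(0,a)$, $(b,0)$) are invariants of the KMSW bijection, so these parameters are unaffected by the restriction to $E$-admissible walks. The resulting map is therefore a bijection between the two stated sets.
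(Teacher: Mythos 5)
Your proposal is correct and matches the paper exactly: the paper states Proposition~\ref{prop:E-admissible} as an immediate consequence of Claim~\ref{claim:tra} together with the type-preservation property of the KMSW bijection, which is precisely your argument. Nothing further is needed.
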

\begin{proof}
It is a direct consequence of Theorem~\ref{theo:KMSW} and Claim~\ref{claim:tra}. 
Indeed, Claim~\ref{claim:tra} ensures that a plane bipolar orientation is a plane bipolar poset iff the type $(i,j)$ of every inner face satisfies $i\geq 1$ and $j\geq 1$, which 
occurs iff every face-step $(-i,j)$ in the corresponding tandem walk  satisfies $i\geq 1$ and $j\geq 1$, i.e., the tandem walk is $E$-admissible. 
\end{proof}

Note that in Proposition~\ref{prop:E-admissible}, the primary parameter of the poset (the one corresponding to the walk length) is 
the number of edges (minus $1$). 
We now give a bijection that will allow us to encode plane bipolar posets by (weighted)  quadrant tandem walks, this time with the number of vertices as the primary parameter (associated wtih the walk length)\footnote{Though we will focus on univariate enumeration of plane bipolar posets, both approaches make it possible to compute the number $b_{n,m}$ of plane bipolar posets having $n$ vertices and $m$ edges.}. 

Given  a plane bipolar orientation~$B$ (whose vertices and edges are black), a~\emph{transversal completion} of~$B$ consists of the following steps:
\begin{itemize}
	\item subdivide every edge into a path of length 2, with a so-called~\emph{subdivision-vertex} (white vertex) in the middle ;
	\item in every inner face~$f$, add so-called~\emph{transversal edges} (red edges), each such edge from a subdivision-vertex on the left side to a subdivision-vertex on the right side of~$f$, so that after adding these edges, all faces within~$f$ are of one of the 4 types shown in~Figure~\ref{fig:face_v-transverse}. Such a configuration of edges within $f$ 
	is called \emph{admissible}.
\end{itemize}
\begin{figure}
\begin{center}
\includegraphics[width=\textwidth]{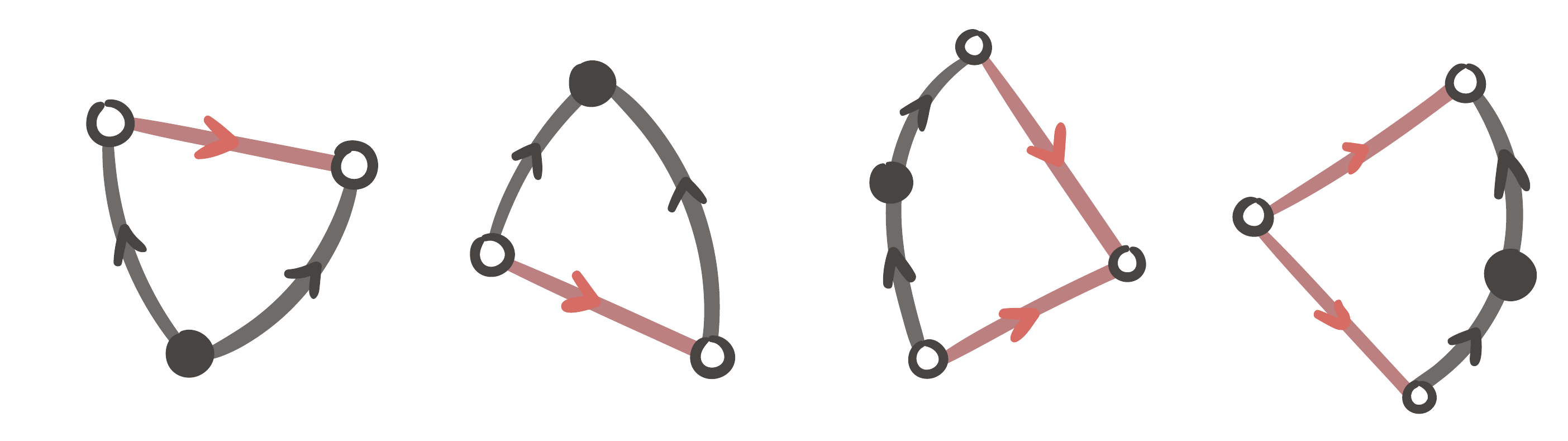}
\end{center}
\caption{The four possible types of inner faces in a~$V$-transverse bipolar orientation.
}
\label{fig:face_v-transverse}
\end{figure}
A~\emph{$V$-transverse bipolar orientation} is defined as a plane bipolar orientation endowed with a transversal completion, see the left-part of 
Figure~\ref{fig:bij_bipolar_pos} for an example.

\begin{lemma}\thlabel{res:transversal_completion}
	Let~$B$	be a plane bipolar orientation, and let~$f$ be an inner face of type~$(i,j)$ in $B$. 
	Then, the admissible configurations within $f$ ---in transversal completions of $B$--- 
	are encoded by walks from~$(0,0)$ to~$(-i,j)$ with steps in~$\{W,N\}$.
\end{lemma}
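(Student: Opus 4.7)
The plan is to parametrize admissible transversal completions inside $f$ by the ordered list of transversal edges, and then translate the admissibility of each subface into a $\{W,N\}$-step condition.

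First I would label the subdivision vertices on $L_f$ as $l_1,\dots,l_{i+1}$ from bottom to top, and those on $R_f$ as $r_1,\dots,r_{j+1}$, so that any transversal edge is of the form $(l_a,r_b)$. Since transversal edges lie inside $f$ and can only share endpoints, they can be linearly ordered from bottom to top as $T_1,\dots,T_K$ with $T_k=(l_{a_k},r_{b_k})$, both sequences $(a_k)$ and $(b_k)$ being non-decreasing.

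Next I would inspect the four authorised face types of Figure~\ref{fig:face_v-transverse}: two are triangles whose unique black vertex is either the bottom or the top vertex of $f$, while the other two are quadrangular faces formed by two transversal edges sharing a common left (resp.\ right) endpoint, together with a bipolar path of length~$2$ on the opposite side (passing through one intermediate black vertex). The bottom-apex triangle forces $T_1=(l_1,r_1)$ and the top-apex triangle forces $T_K=(l_{i+1},r_{j+1})$. For the face bounded by two successive transversals $T_k$ and $T_{k+1}$ to be of one of the two quadrangular types, the transition must satisfy $(a_{k+1}-a_k,b_{k+1}-b_k)\in\{(1,0),(0,1)\}$: any larger increment would leave an intermediate subdivision vertex on the boundary of that face, turning it into a polygon of degree at least~$5$, which is not among the four allowed shapes.

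Consequently $K=i+j+1$, and recording the consecutive transitions gives a word of length $i+j$ over $\{W,N\}$, where $W$ records a $(1,0)$ transition in $(a,b)$ (read as a walk-step $(-1,0)$) and $N$ records a $(0,1)$ transition. This word has exactly $i$ letters $W$ and $j$ letters $N$, so it encodes a walk from $(0,0)$ to $(-i,j)$. Conversely, any such walk reconstructs the sequence $(a_k,b_k)$ starting from $(1,1)$, from which the transversal edges, and hence a transversal completion of $f$, are uniquely determined; by construction every resulting face is one of the four authorised types, so the two maps are mutually inverse. The main step to check with care is the forward direction: namely, ruling out every transition other than $(1,0)$ and $(0,1)$ by identifying which intermediate subdivision vertices must lie on the boundary of the face between two consecutive transversals. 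The rest of the argument is routine bookkeeping.
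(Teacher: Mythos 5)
Your proof is correct and follows essentially the same route as the paper's: both set up the same correspondence between the $i+j$ steps of the walk and the intermediate faces of the completion (a $W$ for each quadrangular face advancing one white vertex on the left side, an $N$ for each one advancing on the right), with the bottom and top triangles forcing the first and last transversal edges. The paper spells out the walk-to-configuration direction and treats the converse briefly, whereas you give more detail on the converse (the planarity and face-degree argument forcing unit increments), but the underlying bijection is identical.
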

\begin{proof}
	The addition of edges within $f$ is done as follows (see Figure~\ref{fig:trans_completion}). 
	Given a walk $\gamma$ from $(0,0)$ to~$(-i,j)$ with steps in~$\{W,N\}$, we start with~$f$ having no transversal edge, and maintain two marked vertices :~$v_\ell$ on the left lateral path of~$f$ and~$v_r$ on its right lateral path. We start with~$v_\ell$ (resp.~$v_r$) on the first white vertex on the left (resp. right) lateral path, and we start by drawing a transversal edge directed from~$v_\ell$ toward~$v_r$. When reading a step~$W$ (resp.~$N$) of $\gamma$, we move up~$v_\ell$ (resp.~$v_r$) on the next white vertex on the left (resp. right) lateral path of~$f$; then we add a transversal edge directed from~$v_\ell$ toward~$v_r$. 
\begin{figure}
\begin{center}
\includegraphics[width=0.7\textwidth]{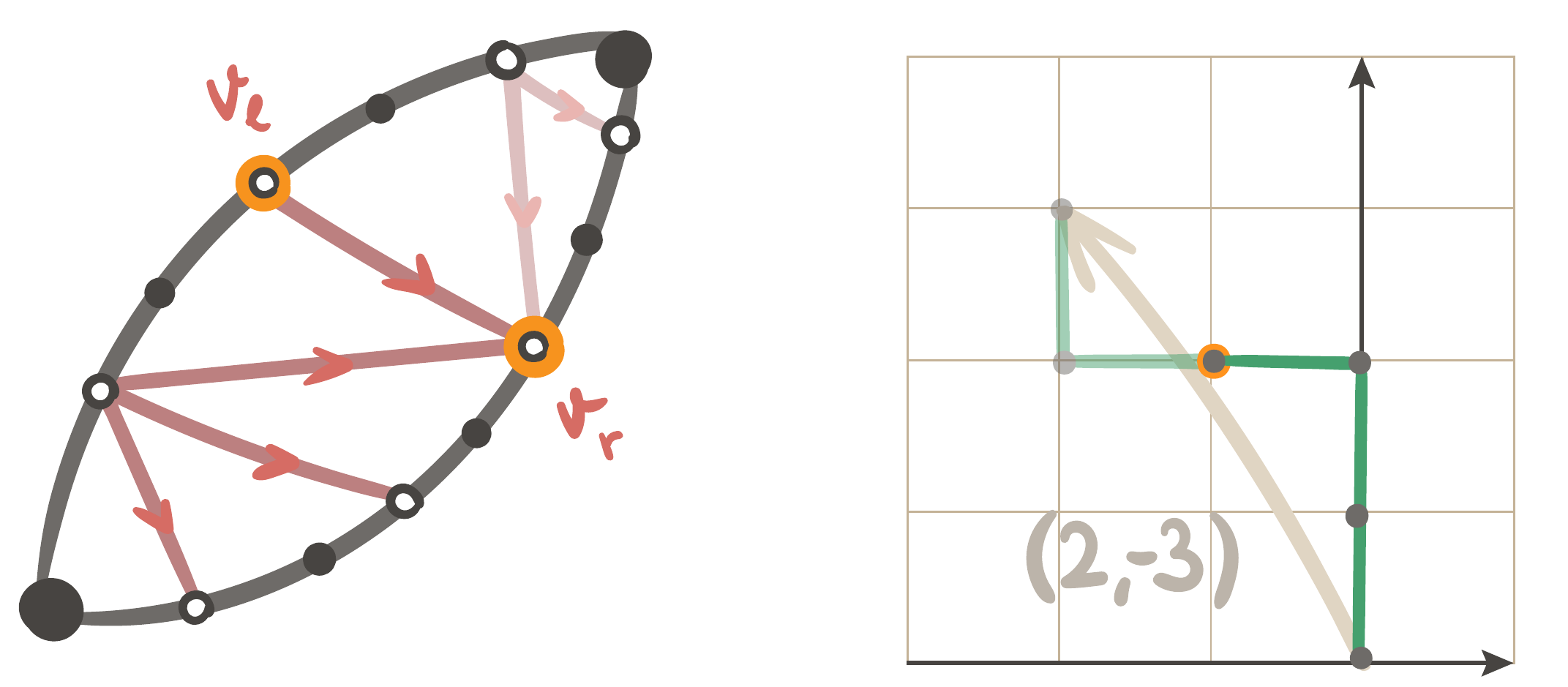}
\end{center}
\caption{A possible configuration of transversal edges within an inner face (of the underlying plane bipolar orientation) 
in a $V$-transverse bipolar orientation, and the corresponding walk with steps in~$\{W,N\}$.}
\label{fig:trans_completion}
\end{figure}

Conversely, every admissible configuration within $f$ clearly yields such a walk, 
by traversing the faces in $f$ from bottom to top, excluding the first and last one,
 and writing a step $W$ (resp. $N$) each time a face of the 3rd (resp. 4th) type is traversed.  	
\end{proof}
We define a \emph{$V$-admissible} tandem walk as a tandem walk where, to each face-step $(-i,j)$, a walk with steps in $\{W,N\}$ is attached, with same starting and ending point as the face-step. 
\begin{proposition}\label{prop:posets_vertices}
Plane bipolar posets of pole-type $(p,q)$, with $n+2$ vertices and $f$ inner faces, are in bijection with $V$-transverse bipolar orientations such that the underlying plane bipolar orientation has outer type $(p,q)$, with $n$ edges and $f+2$ vertices. These are in bijection  with $V$-admissible quadrant tandem walks 
 of length $n-1$ from $(0,p)$ to $(q,0)$ with $f$ SE steps.
\end{proposition}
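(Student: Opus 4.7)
The proposition packages two bijections: (A) plane bipolar posets $\leftrightarrow$ $V$-transverse bipolar orientations, and (B) $V$-transverse bipolar orientations $\leftrightarrow$ $V$-admissible quadrant tandem walks. I would handle them in order. For (B), the bijection is essentially automatic from the KMSW bijection combined with \thref{res:transversal_completion}. Indeed, KMSW sends the underlying plane bipolar orientation $B$ of outer type $(p,q)$ with $n$ edges and $f+2$ vertices to a quadrant tandem walk of length $n-1$ from $(0,p)$ to $(q,0)$ with $f$ SE-steps (one per non-pole vertex) and one face-step of type $(i,j)$ per inner face of $B$ of type $(i,j)$. By \thref{res:transversal_completion}, the transversal completion inside each such inner face is encoded by a walk from $(0,0)$ to $(-i,j)$ with steps in $\{W,N\}$, which is precisely the datum attached to the corresponding face-step in the $V$-admissibility definition.

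For (A), I would construct the map $\Phi: P \to O$ by a local ``face-to-vertex'' surgery. Given a plane bipolar poset $P$, place a black vertex $v_g$ inside each inner face $g$ of $P$ and retain the poles $S,N$ as black vertices; these form the $f+2$ vertices of the underlying plane bipolar orientation $B$. By property (B), each non-pole vertex $u$ of $P$ has exactly two lateral corners, incident to two distinguished faces $f_u^L, f_u^R$ (inner faces of $P$ or the outer face). Declare $u$ the white subdivision vertex $w_u$ of a black edge of $B$ joining the two corresponding black vertices (with the appropriate pole replacing an outer-face vertex), oriented consistently with the bipolar order of $P$. Every edge of $P$ lying strictly between two inner faces becomes a red transversal edge inside the corresponding inner face of $B$, while the edges of $P$ incident to a pole become pieces of the outer boundary of $B$ after subdivision. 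The inverse $\Psi: O \to P$ is the reverse ``vertex-to-face'' surgery: contract each non-pole black vertex $v$ of $B$ into an inner face of $P$, whose boundary cycle is assembled from the white vertices incident to $v$ together with the transversal edges in the surrounding $B$-faces.

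The substantive verification has three parts. First, one checks that $\Phi$ produces a plane bipolar orientation $B$ satisfying (B) and (B$'$) with the claimed parameter counts: property (B) at $w_u$ comes from the two lateral corners at $u$; property (B$'$) at $v_g$ comes from the two extremal corners of $g$; acyclicity descends from $P$; and the parameter match (including pole-type $=$ outer type) is bookkeeping. Second, one verifies that the red edges placed inside each inner face of $B$ assemble into an admissible configuration in the sense of \thref{res:transversal_completion}, so that the small faces of $B$ after completion are only of the four types in Figure~\ref{fig:face_v-transverse}. Here the no-transitive-edge hypothesis on $P$ is decisive: by Claim~\ref{claim:tra} it rules out inner faces of type $(0,j)$ or $(i,0)$ in $P$, which is exactly what would otherwise produce an inadmissible configuration in $B$. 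Third, one checks $\Psi\circ\Phi=\mathrm{id}$ and $\Phi\circ\Psi=\mathrm{id}$, which are local verifications at each non-pole black vertex of $B$ (resp.\ each inner face of $P$) and reduce to a routine case analysis over the four face-types. The main obstacle I anticipate is the second step: matching the cyclic order of the transversal edges inside each $B$-face with the $\{W,N\}$-walk encoding of \thref{res:transversal_completion}, which demands a careful local geometric analysis of the interplay between the plane structure of $P$ around an inner face of $B$ and the bipolar orientation at the associated black vertex.
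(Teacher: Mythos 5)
Your two-stage architecture (posets $\leftrightarrow$ $V$-transverse bipolar orientations $\leftrightarrow$ $V$-admissible walks) is the paper's, and your part (B) --- KMSW applied to the underlying orientation, combined with Lemma~\ref{res:transversal_completion} for the per-face decorations --- is correct as stated. The core dictionary in your part (A) is also the right one: inner faces of $P$ correspond to non-pole black vertices of $B$, non-pole vertices $u$ of $P$ correspond to white subdivision vertices, the black edge through $u$ joins the two faces at $u$'s lateral corners, and the no-transitive-edge hypothesis (via Claim~\ref{claim:tra}) is indeed what guarantees that each face-vertex $v_g$ acquires positive in- and out-degree, i.e.\ that $B$ is bipolar.

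However, your treatment of the poles is wrong, and it is not a cosmetic issue. You ``retain the poles $S,N$ of $P$ as black vertices'' of $B$ and assert that the edges of $P$ incident to a pole ``become pieces of the outer boundary of $B$ after subdivision.'' Count what this would require: $P$ of pole-type $(p,q)$ has $p+1$ edges at $S$ and $q+1$ at $N$, whereas a subdivided black edge contributes exactly one half-edge at each of its black endpoints, so only a bounded number of half-edges of the subdivided outer boundary of $B$ can be incident to $S$ or $N$. Worse, under your own rule the black edge through a neighbor $u$ of $S$ joins the faces at $u$'s two lateral corners, and neither corner involves $S$ unless $u$ happens to lie on the outer boundary of $P$; so an edge of $P$ from $S$ to an interior neighbor has no counterpart at all in your $V$-transverse structure, and the inverse map cannot recover it. In the correct construction the poles of $B$ are two \emph{new} vertices $S',N'$ placed in the outer face of $P$, one on each side of $P$ (the bipolar structure of $B$ is rotated a quarter turn relative to that of $P$); the poles $S,N$ of $P$ together with all their incident edges are deleted, and conversely they are re-created as extra red edges joining $S$ (resp.\ $N$) to the white vertices on the left (resp.\ right) lateral path of $B$. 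This is exactly what makes the pole-type $(p,q)$ of $P$ equal to the outer type of $B$: the $p+1$ neighbors of $S$ are the white midpoints of the $p+1$ edges of $B$'s left lateral path. With your identification the parameter correspondence claimed in the proposition fails and the map is not invertible, so part (A) needs to be redone along these lines.
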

\begin{remark}
Proposition~\ref{prop:posets_vertices} yields an extension of the bijection in~\cite{fusy2010new} between plane bipolar posets with no N-pattern and bipolar orientations.
That bijection corresponds to the case where the walk attached to each face-step $(-i,j)$ is $W^iN^j$, as illustrated in Figure~\ref{fig:poset_no_N}. \hfill$\triangle$
\end{remark}
\begin{figure}
\begin{center}
\includegraphics[width=0.7\textwidth]{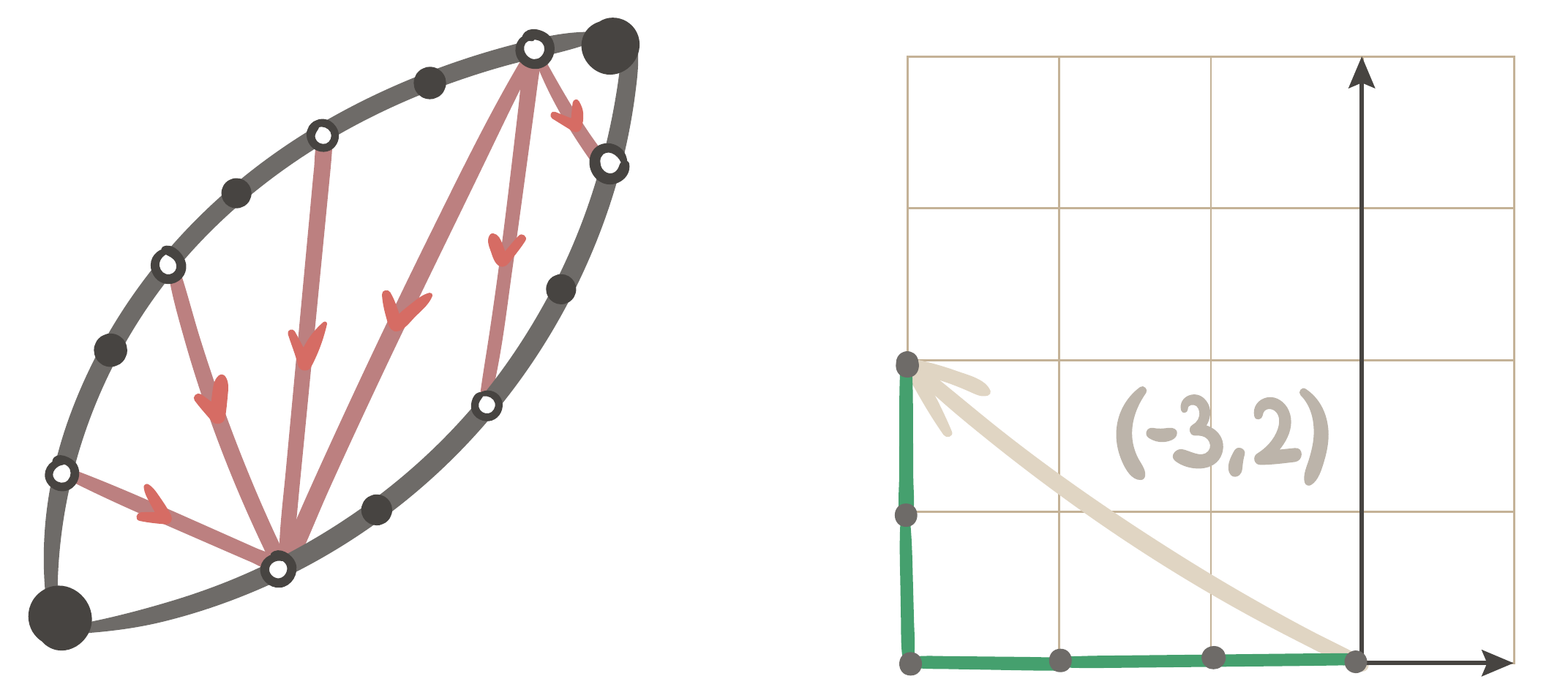}
\end{center}
\caption{The unique completion avoiding the appearance of an N-pattern is the one where the encoding walk is of the form $W^iN^j$.}
\label{fig:poset_no_N}
\end{figure}
\begin{proof}
Let~$B$ be a~$V$-transverse bipolar orientation with poles~$S'$ and~$N'$. Let~$B'$ be obtained as follows (see~Figure~\ref{fig:bij_bipolar_pos}):
  \begin{itemize}
	\item We add two vertices~$S$ and~$N$ in the outer face of~$B$, respectively next to the left lateral path and next to the right lateral path of $B$.
	\item For each white vertex $v$ on the left (resp. right) boundary of $B$, we add a red edge from $S$ to $v$ (resp. from $v$ to $N$).
	\end{itemize}
\begin{figure}
\begin{center}
\includegraphics[width=\textwidth]{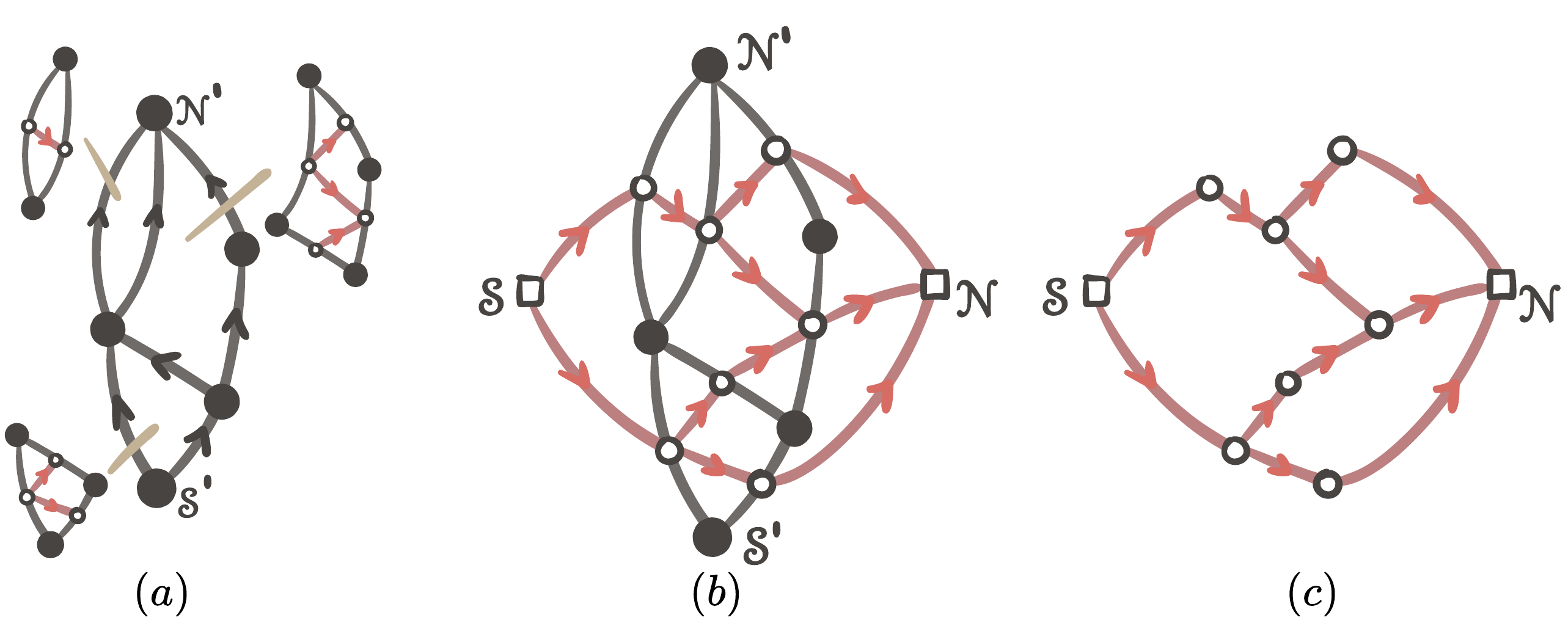}
\end{center}
\caption{(a) A $V$-transverse bipolar orientation~$B$, (b) the map~$B'$ obtained after addition of $S,N$ and their incident edges, (c) the plane bipolar poset~$P=\phi(B)$.}
\label{fig:bij_bipolar_pos}
\end{figure}	
	Let~$P$ be obtained from $B'$ by removing all back vertices and all black edges. Let $\Pi_E^*$ be the partial order~\cite[Proposition 5.1]{de1995bipolar} on the edges of $B$ 
	such that $e<e'$ if there is a sequence $e=e_1,\ldots,e_k=e'$ of edges such that for $1\leq i<k$, $e_i$ and $e_{i+1}$ are respectively on the left side and on the right side of  
	a same inner face. 
	Clearly, any directed edge of $P$ is from a white vertex in the middle of an edge $e\in B$ to a white vertex in the middle of an edge $e'\in B$ such that
	$e< e'$. Hence $P$ is acyclic. Moreover, if~$v$ is a white vertex different from~$N$ or~$S$, let $e$ be the edge of $B$ having $v$ in its middle, and let $f_\ell$ be the face
	of $B$ on the left of $e$ (possibly the ``left outer face"), and let $f_r$ be the face of $B$ on the right of $e$ (possibly the ``right outer face").  
	From Figure~\ref{fig:face_v-transverse} it easily follows that if $f_\ell$ is an inner face, then $v$ must have at least one ingoing edge within $f_{\ell}$, and if $f_{\ell}$
	is the left outer face, there is one edge from $S$ to $v$. Hence, $v$ has positive indegree. Similarly, it has positive outdegree.  Thus~$S$ is the only source 
	and $N$ the only sink of $P$. Hence,  $P$ is a plane bipolar orientation. 
	Moreover, from Figure~\ref{fig:face_v-transverse}, if follows that in $B'$ there is an inner face $f$ of $P$ around each non-pole black vertex $v$, with the configuration
	shown  in Figure~\ref{subfig:red_face_V-trans}. Thus, the type of $f$ is $(i,j)$ where $i$ (resp. $j$) is the outdegree (resp. indegree) of $v$. Hence, $P$ is a plane bipolar poset. 
	We let $\phi$ be the mapping that associates $P$ to $B$. 

\begin{figure}
\begin{center}
\begin{subfigure}[b]{0.4\textwidth}
         \centering
         \includegraphics[width=\textwidth]{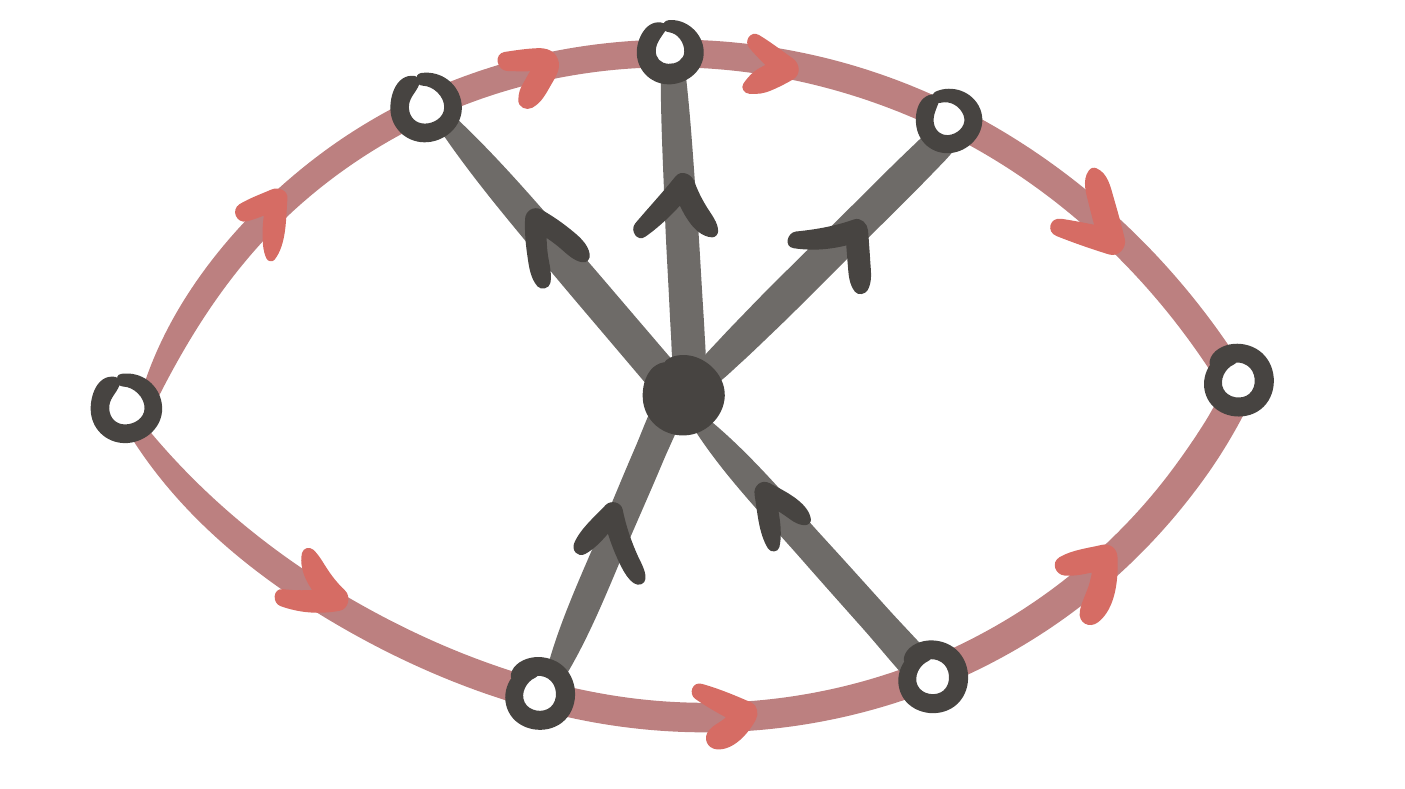}
    		\caption{Around a black vertex $\notin\{S',N'\}$.}
    		\label{subfig:red_face_V-trans}
     \end{subfigure}\hspace*{0.03\textwidth}%
         \begin{subfigure}[b]{0.4\textwidth}
         \centering
         \includegraphics[width=\textwidth]{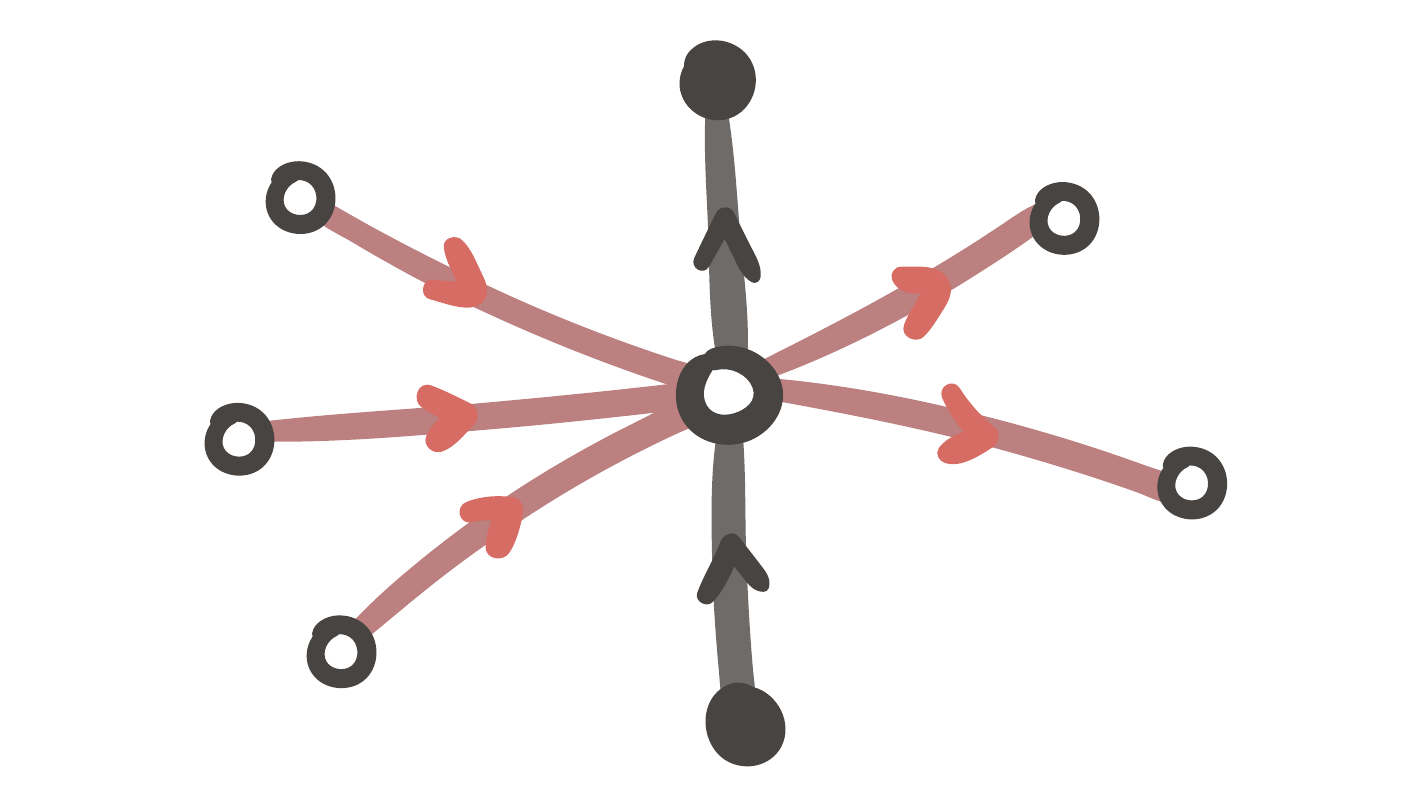}
    		\caption{Around a white vertex $\notin\{S,N\}$.}
    		\label{subfig:white_vertex_V-trans}
     \end{subfigure}
\end{center}
\caption{Configurations around vertices in the intermediate map~$B'$.}
\end{figure}
The other way around, let~$P$ be a plane bipolar poset, with white vertices and red edges,  and let~$B'$ be obtained from $P$ by the following additions :
	\begin{itemize}
	\item We add a black vertex~$v_f$ in every inner face~$f$ of~$P$. We also add two black vertices~$N', S'$ in the outer face of $P$, respectively on the left side and on the right side of $P$. 
	\item For every inner face~$f$ of $P$, 
	we add black edges connecting $v_f$ to all non-extremal vertices around $f$; the edges connected to the left (resp. right) lateral path of $f$ are directed from (resp. toward) $v_f$, as in Figure~\ref{subfig:red_face_V-trans}. 
	\item For every vertex $v\notin\{S,N\}$ on the left (resp. right) boundary of $P$, we add a black edge from $N'$ to $v$ (resp. from $v$ to $S'$).
	\end{itemize}
	At every non pole white vertex, we have the configuration shown in~Figure~\ref{subfig:white_vertex_V-trans}.
	Hence, the part made of the black edges and the vertices $\notin\{S,N\}$ is the 2-subdivision of an oriented map $B$.
	Moreover, by Figure~\ref{subfig:red_face_V-trans}, $B$ has a single source at $S'$ and a single sink at $N'$. We also observe (see Figure~\ref{subfig:white_vertex_V-trans}) that whenever there is an edge from $u$ to $v$
	in $B$,  in $P^*$ (the dual of $P$, in the sense of~\cite{de1995bipolar}) there is a directed path from $u$ to $v$. Since $P^*$ is acyclic, $B$ is thus also acyclic, so that it is a plane bipolar orientation. We let $\psi$ be the mapping that 
	associates $B$ to $P$. 
		
To show that~$\phi$ and~$\psi$ are inverse of one another, we observe that black vertices and black edges removed by the construction~$\phi$ are those added by~$\psi$.  
Moreover, red edges we add with~$\phi$ on the left and on the right of~$B$'s outer face are the edges incident to~$S$ and~$N$ in~$P$, which we remove in the construction~$\psi$.

Applying the KMSW bijection to $V$-transverse bipolar orientations, and using Lemma~\ref{res:transversal_completion}, 
we obtain the bijective correspondence with $V$-admissible tandem walks as stated.  
\end{proof}

\subsection{Application to transversal structures}\label{sec:appli_ts}

Another kind of oriented maps to be related below to weighted quadrant tandem walks are transversal structures~\cite{he1993finding,kant1997regular,fusy2009transversal} (which  encode the combinatorial types of generic rectangulations). 
A \emph{4-outer map} is a simple map whose outer face contour is a (simple) 4-cycle, the outer vertices being denoted $W,N,E,S$ in clockwise order.  A \emph{transversal structure} on such a map (see Figure~\ref{fig:transversal}) is an orientation and  bicoloration of its inner edges (in blue or red) so that the following local conditions are satisfied:  
\begin{itemize}
\item[(T1):] the edges incident to $W,N,E,S$ are respectively outgoing blue, ingoing red, ingoing blue, and outgoing red;
\item[(T2):] for each inner vertex $v$, the incident edges in clockwise order around $v$ form four (non-empty) groups: outgoing red, outgoing blue, ingoing red, ingoing blue. 
\end{itemize}
The \emph{$WE$-type} of a transversal structure is the integer pair $(\mathrm{deg}(W)-2,\mathrm{deg}(E)-2)$. 

\begin{figure}
\begin{center}
\begin{subfigure}[b]{0.3\textwidth}
         \centering
         \includegraphics[width=\textwidth]{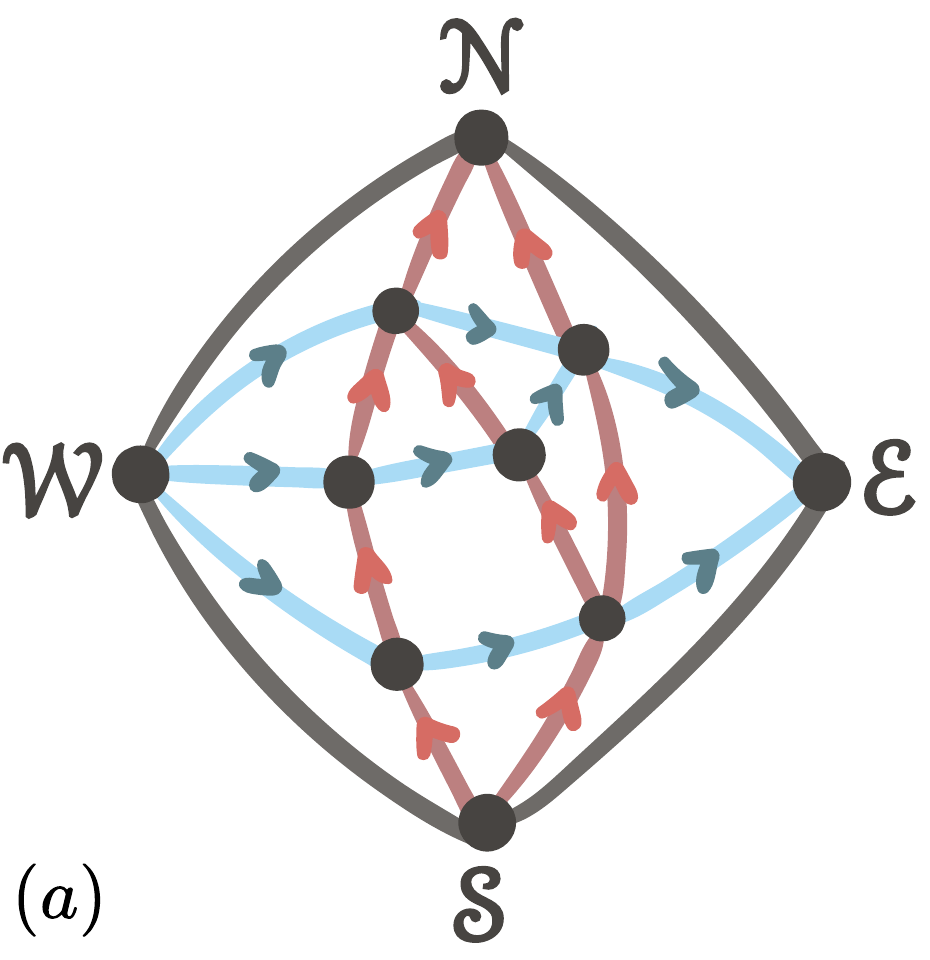}
    
     \end{subfigure}
     \begin{subfigure}[b]{0.3\textwidth}
         \centering
         \includegraphics[width=\textwidth]{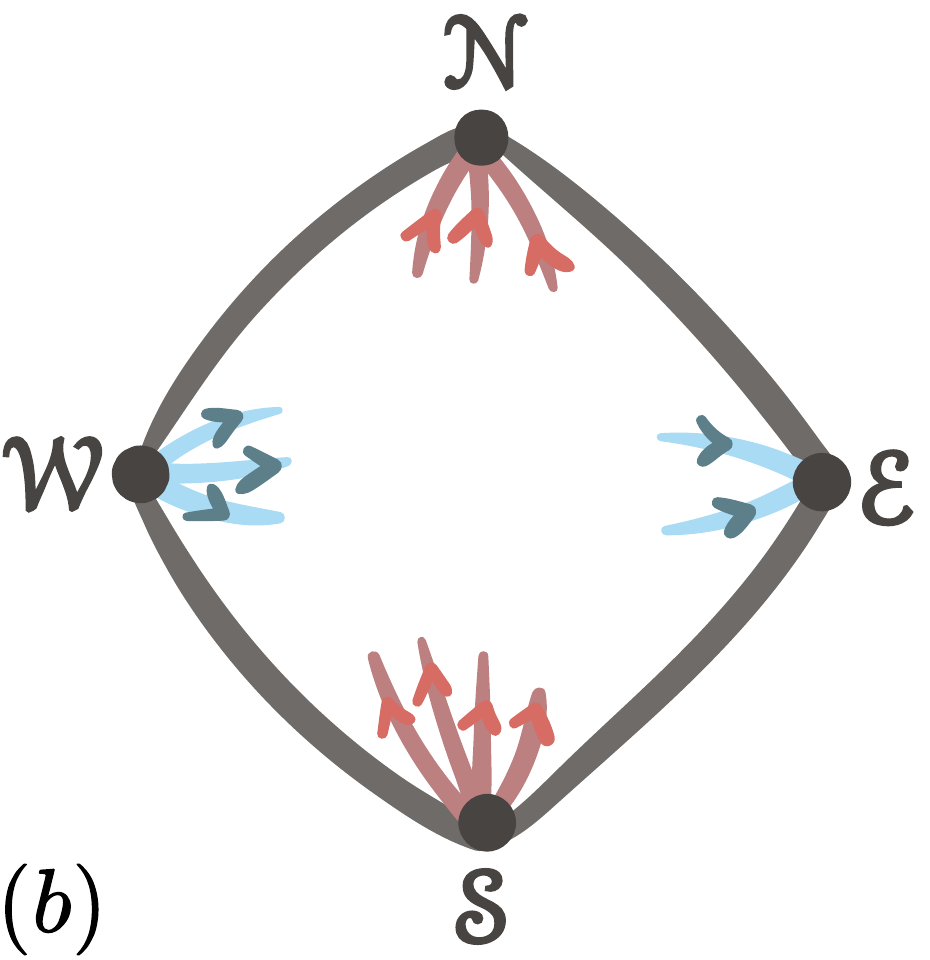}
     \end{subfigure}
          \begin{subfigure}[b]{0.3\textwidth}
         \centering
         \includegraphics[width=\textwidth]{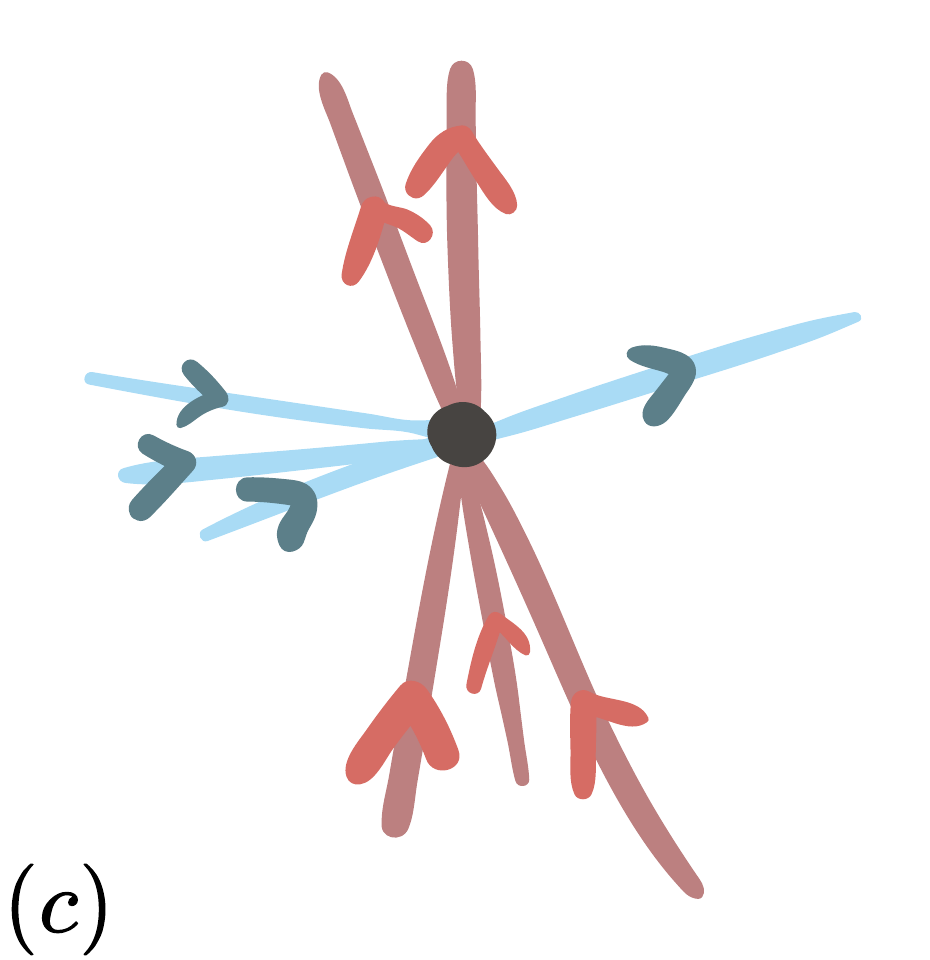}
     \end{subfigure}
\end{center}
\caption{(a) A  4-outer map endowed with a transversal structure. (b) The local condition (T1). (c) The
local condition (T2).}
\label{fig:transversal}
\end{figure}

\begin{lemma}\thlabel{res:T:acyclic_poset}
In a transversal structure $X$, let $X_r$ be the spanning oriented map formed by the red edges, and the $4$ outer edges, recolored red and oriented so as to form 
two directed paths from $S$ to $N$. Then $X_r$ is acyclic and defines a poset. All inner faces have degree in $\{3,4\}$ and must be of one of the five types shown in Figure~\ref{fig:five_lcdt_faces}. Inner faces incident to~$W,N,E,S$ must have degree~$3$.
\end{lemma}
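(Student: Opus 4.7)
The plan is to establish the three claims of the lemma in turn: that $X_r$ is a plane bipolar orientation (hence acyclic and defines a poset), that every inner face of $X$ has degree $3$ or $4$ and matches one of the five types of Figure~\ref{fig:five_lcdt_faces}, and that inner faces incident to $W, N, E, S$ are triangles.

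First I would verify conditions (B) and (B') of Section~\ref{sec:bipo} for $X_r$. At an inner vertex $v$, deleting the two blue blocks in the CW rotation supplied by (T2) leaves the two red blocks outR and inR as consecutive groups in the $X_r$-rotation, giving (B); at $W$ and $E$ the only red edges of $X_r$ are the two recolored outer ones, one incoming and one outgoing; at $S$ and $N$ all red edges are outgoing, resp.\ ingoing, by (T1) together with the orientation of the outer edges. For (B'), I would walk around each face $f_r$ of $X_r$ and use (T2) at each vertex of $\partial f_r$ to count transitions between the outR and inR sub-blocks of the $X_r$-rotation, showing exactly two extremal corners per face. Once (B) and (B') hold, $X_r$ is a plane bipolar orientation, in particular acyclic and inducing a poset on its vertex set.

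For the face classification, I would analyze the boundary walk of an inner face $f$ of $X$ by recording, at each vertex $v_i$ along $\partial f$, the 4-block type (outR, outB, inR, inB) of the two darts of $f$ at $v_i$. Consecutiveness in the rotation forces the corner to be either a non-transition corner (both darts in the same block) or a transition corner (in two CCW-adjacent blocks), while across each edge the dart type flips direction and preserves color. This imposes a rigid arithmetic on the cyclic block sequence around $\partial f$. Combined with the bipolarity of $X_r$ and of its blue analogue $X_b$ (which rules out single-color boundaries with more than one source or sink), a case analysis pins down the admissible cyclic patterns to the five configurations of Figure~\ref{fig:five_lcdt_faces}. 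The main obstacle is to organize this case analysis efficiently, showing that every non-transition corner forces closure of $\partial f$ within three or four edges and excluding all cyclic patterns of length $\geq 5$.

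For the final claim, note that (T1) makes the block structure degenerate at outer vertices: at $N$ every dart is inR, at $S$ every dart is outR, at $W$ the inB block is empty, and at $E$ the outB block is empty. Any inner face $f$ incident to $N$ (resp.\ $S$) has its corner there of type $(\mathrm{inR},\mathrm{inR})$ (resp.\ $(\mathrm{outR},\mathrm{outR})$), a non-transition corner absent from the quadrangular type of Figure~\ref{fig:five_lcdt_faces}; hence $f$ is a triangle. For $W$, if the corner of $f$ at $W$ is between two interior edges it is of type $(\mathrm{outB},\mathrm{outB})$ and the same argument applies; otherwise $f$ contains one of the outer edges $WS$ or $WN$, hence also contains $S$ or $N$, so the previous case forces $f$ to be a triangle. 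The case of $E$ is symmetric.
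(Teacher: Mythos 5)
Your verification of condition (B) is fine, but the step on which everything else rests --- ``showing exactly two extremal corners per face'' of $X_r$ --- is asserted rather than proved, and it cannot be obtained by the local count you describe. The number of extremal corners of a face of $X_r$ is a global property of its boundary walk: a face whose boundary is a directed red cycle has \emph{zero} extremal corners, violates (B'), and is perfectly consistent with all the local data (T1)--(T2) at its vertices. Even the global corner count via Euler's formula only gives an \emph{average} of two extremal corners per inner face, so one face with four could be compensated by one with zero; excluding the zero case is exactly the acyclicity statement you are trying to derive from (B'). Your part 2 cannot rescue this, since you explicitly invoke the bipolarity of $X_r$ (and of the blue analogue) in the case analysis there, so the argument is circular. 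The missing ingredient is a genuinely non-local argument: the paper takes a \emph{minimal} directed red cycle $\gamma$, observes that (T2) forces every blue edge inside $\gamma$ incident to a vertex of $\gamma$ to be oriented the same way (all outgoing or all ingoing), so $\gamma$ has no red chord and must enclose a vertex $v_0$; the outgoing-blue path and the ingoing-blue path from $v_0$ then hit $\gamma$ at two distinct vertices and, together with an arc of $\gamma$, bound a smaller directed cycle inside $\gamma$ --- a contradiction. Some argument of this kind (blue paths trapped inside a minimal red cycle) is indispensable.

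Two further points. First, ``defines a poset'' is used downstream (Proposition~\ref{prop:transversal}) in the strong sense that $X_r$ is a plane bipolar \emph{poset}, i.e.\ has no transitive edge, equivalently no inner face of type $(i,0)$ or $(0,j)$ (Claim~\ref{claim:tra}); your proposal only delivers the trivial fact that an acyclic orientation induces a partial order. The paper rules out a face of type $(i,0)$ by noting that a vertex in the interior of its left lateral path must emit a blue edge inside the face by (T2), and no corner of that face can receive it. Second, your five-type classification is left as an acknowledged ``obstacle''; the paper's route --- first establish that $X_r$ is a plane bipolar poset, then argue within each inner face of $X_r$ that the blue edges must join the interior of the left lateral path to the interior of the right lateral path, forcing the five configurations --- is the cleaner way to organize that case analysis, and you should adopt it once acyclicity is secured.
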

\begin{proof}
Assume that $X_r$ has a directed cycle, and let $\gamma$ be a minimal one, i.e. whose interior~$\gamma^\circ$ does not contain the interior of another directed red cycle of~$X_r$. Condition~(T2) implies that if~$\gamma$ is clockwise (resp. counterclockwise), then any blue edge in~$\gamma^\circ$, incident to a vertex~$v$ on~$\gamma$, must be outgoing (resp. ingoing) at~$v$. Hence, $\gamma$ has no red chord inside, so that there must be at least one vertex~$v_0$ in~$\gamma^\circ$.
	
	From~$v_0$ starts a path of outgoing blue edges (every vertex has at least one outgoing blue edge) that can not loop by minimality of~$\gamma$, hence has to reach a vertex~$v'$ on~$\gamma$. Similarly, from~$v_0$ starts a path of ingoing blue edges that can not loop and reaches a vertex~$v''\neq v'$ on~$\gamma$. These two paths together with the path on~$\gamma$ connecting~$v'$ and~$v''$, form a directed cycle whose interior is in~$\gamma^\circ$, a contradiction. Conditions~(T1) and~(T2) ensure that~$S$
	is the only source and $N$ the only sink of $X_r$. Hence, $X_r$ is a plane bipolar orientation.  
	
	Now we show that $X_r$ is a poset. Assume it has an inner face $f$ whose type has a zero entry. Only one entry is zero, since the underlying map is simple.
	Assume e.g. $f$ has type $(i,0)$ for some $i\geq 1$. Then there is a vertex $v$ in the interior of the left lateral path of $f$. By Condition~(T2), there is at least one
	blue edge $e$ in $f$ starting from $v$. Since there is no vertex in the interior of the right lateral path of $f$, no corner around $f$ can possibly have the ingoing part 
	of $e$ so as to satisfy~(T2). Similarly, there can be no inner face of type $(0,j)$ for some $j\geq 1$. Hence, $X_r$ is a plane bipolar poset. 
	It is similarly easy to check that, within each inner face $f$ 
	of $X_r$, the blue edges have to go from the interior of the left lateral path to the interior of the right lateral path of $f$ (in order to have (T2) satisfied), and 
	all faces within $f$ have to be of the five types shown in Figure~\ref{fig:five_lcdt_faces}.   
	\end{proof}

\begin{figure}
\begin{center}
\includegraphics[width=\textwidth]{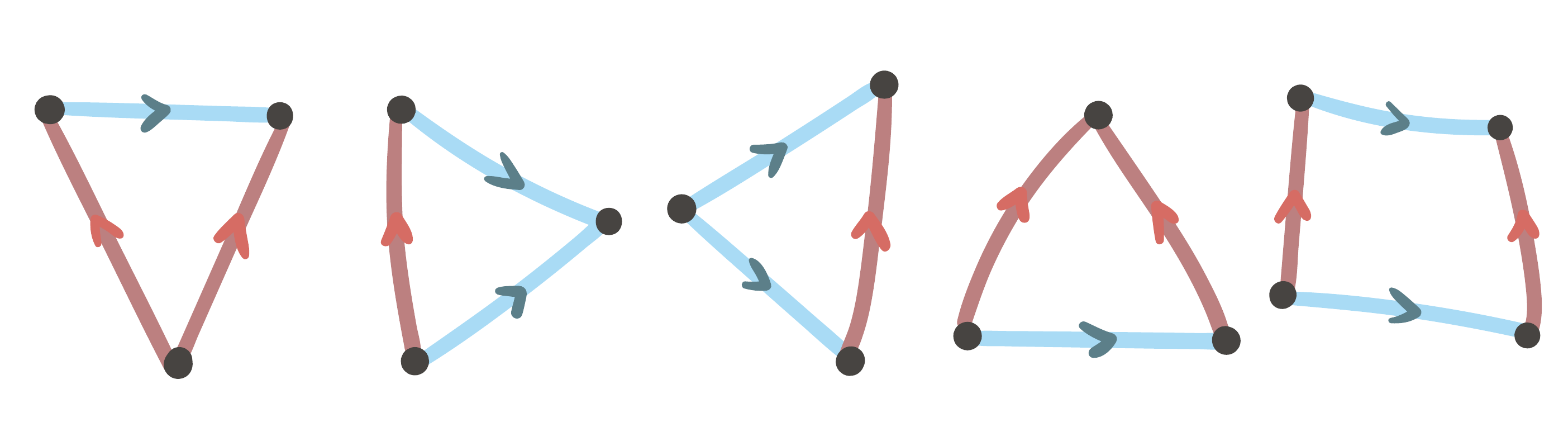}
\end{center}
\caption{The five types of inner faces in a transversal structure. 
}
\label{fig:five_lcdt_faces}
\end{figure}

\begin{remark}
Lemma~\ref{res:T:acyclic_poset} implies that a 4-outer map admitting a transversal structures has to have all its face-degrees in $\{3,4\}$. In the 
reverse direction, it is known that if a 4-outer map with triangular inner faces has no non-facial triangle, then it admits a transversal structure~\cite{he1993finding}. 
Such transversal structures are called \emph{triangulated}. 
As is also known~\cite{he1993finding}, transversal structures naturally arise as the dual to rectangular tilings, the quadrangular faces corresponding to ``degenerate" vertices in the tiling, where $4$ rectangles meet. This correspondence is illustrated in Figures~\ref{fig:rectangular} and~\ref{fig:rectangular_quad}. \hfill$\triangle$
\end{remark}

\begin{figure}
\begin{center}
\includegraphics[width=\textwidth]{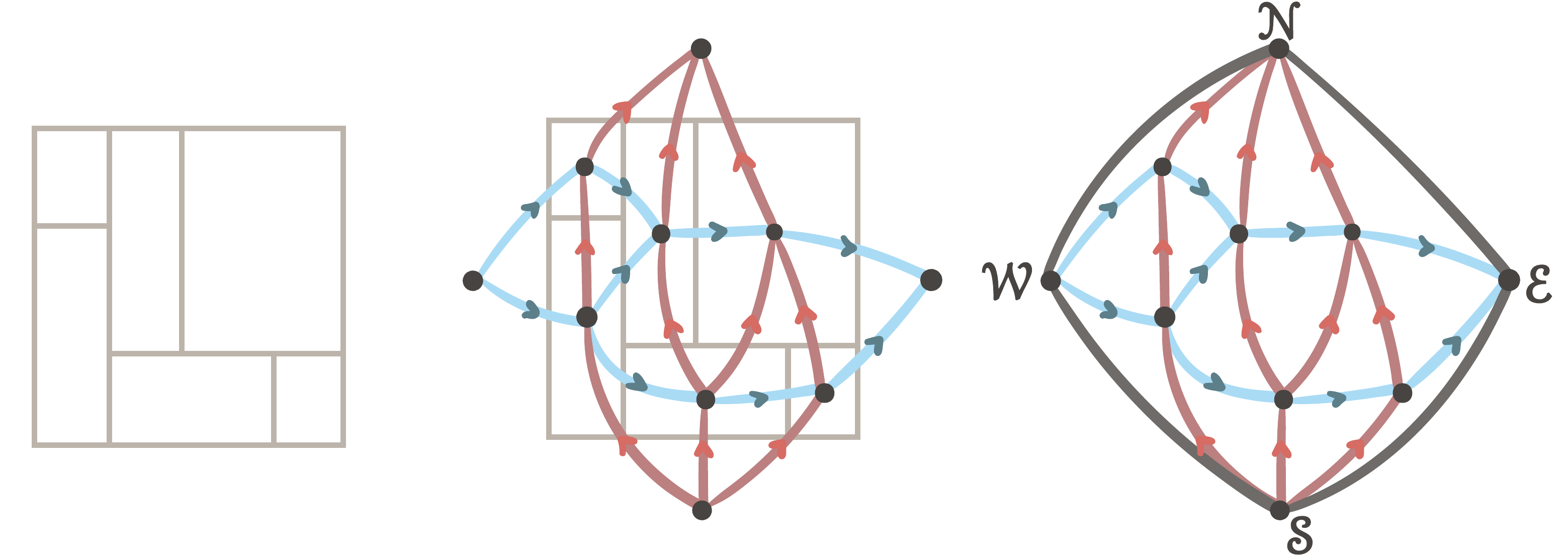}
\end{center}
\caption{A ``generic" rectangular tiling, and its corresponding triangulated transversal structure.}
\label{fig:rectangular}
\end{figure}
\begin{figure}
\begin{center}
\includegraphics[width=\textwidth]{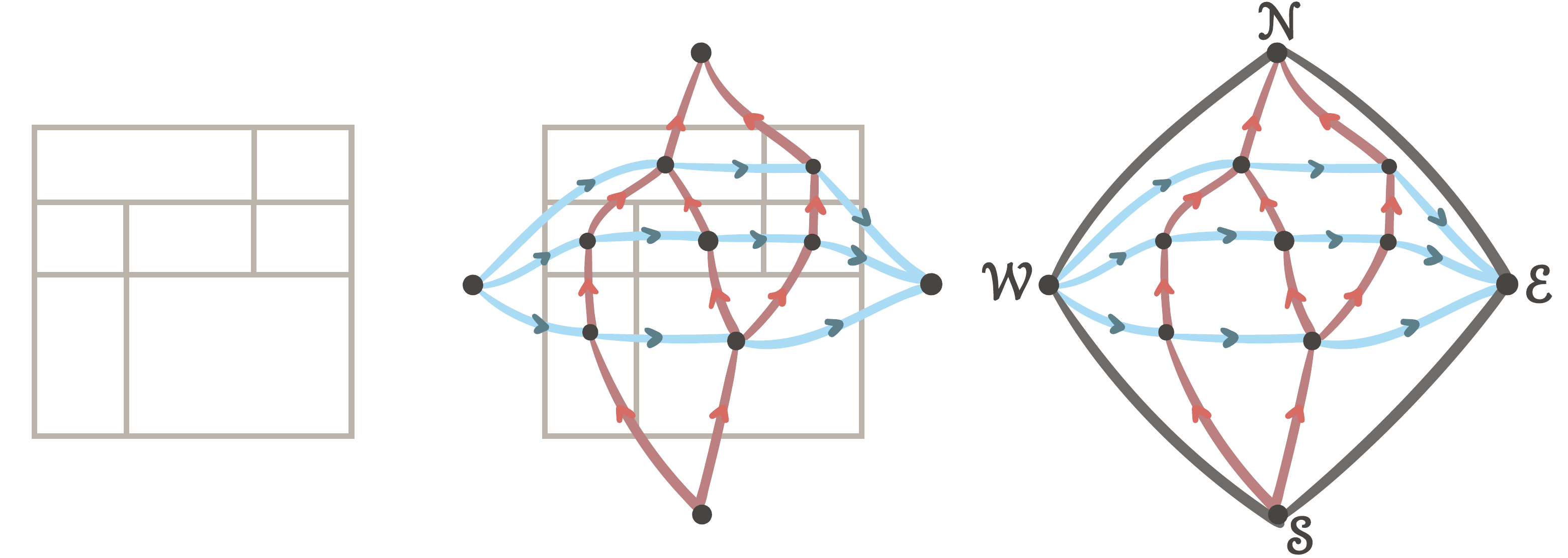}
\end{center}
\caption{A rectangular tiling with two ``degenerate" vertices where 4 rectangles meet, and its corresponding transversal structure.}
\label{fig:rectangular_quad}
\end{figure}
As we are now going to show, transversal structures are bijectively related to certain decorated plane bipolar posets 
  which we call T-transverse bipolar orientations (in doing this, we break the symmetry of the roles played by red edges and blue edges in transversal structures).  
Precisely, for a plane bipolar poset (with red edges), a \emph{transversal addition} consists in the planar addition of so-called \emph{transversal edges} (blue edges) in each inner face~$f$, each such edge directed from a vertex in the interior of the left lateral path to a vertex in the interior of the right lateral path of~$f$, such that after addition of these edges, all faces within~$f$ are of the types shown in~Figure~\ref{fig:five_lcdt_faces}. A~\emph{T-transverse bipolar orientation} is defined as a plane bipolar poset endowed with a transversal addition, see the left-part of Figure~\ref{fig:t-transverse_example} for an example.    

\begin{figure}
\begin{center}
\includegraphics[width=0.7\textwidth]{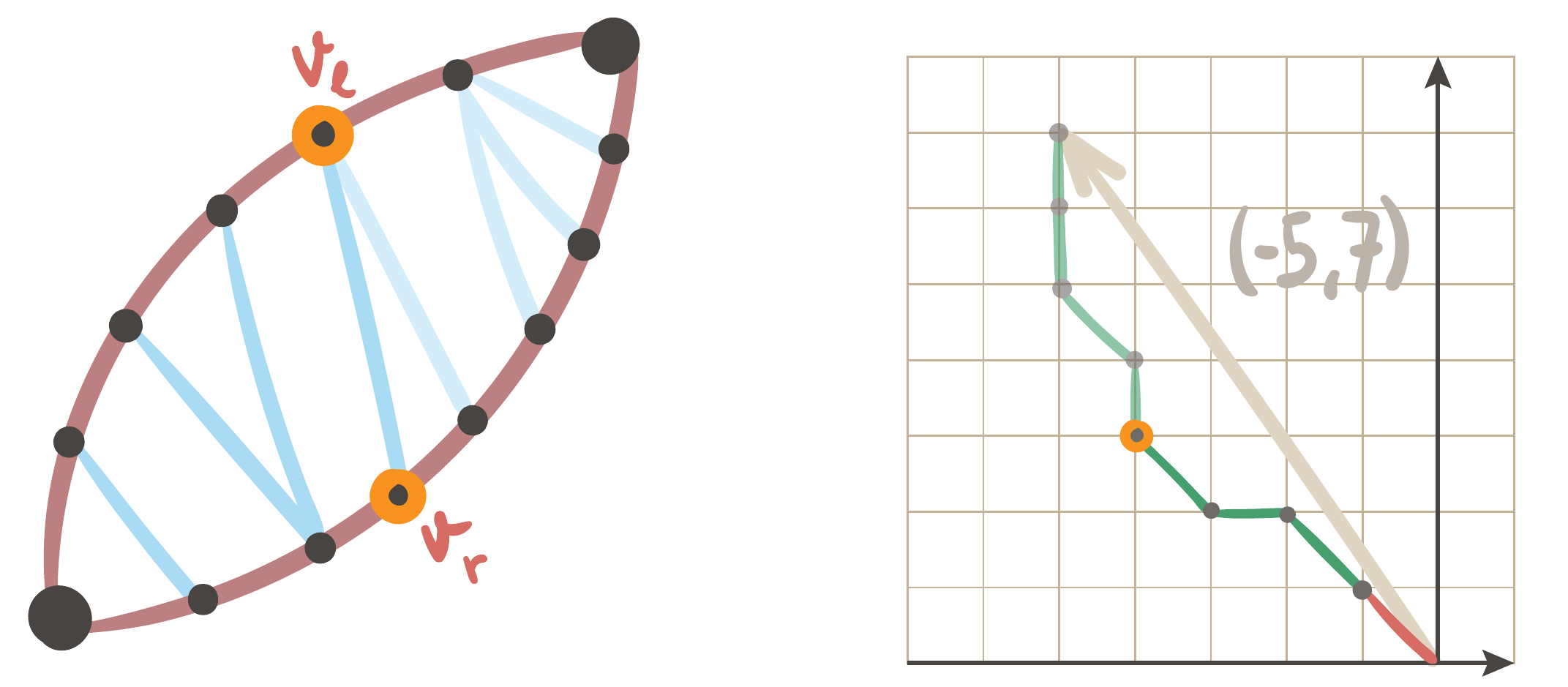}
\end{center}
\caption{The configuration of transversal edges in an inner face of type $(i,j)$ (here $i=5,j=7$) of the red bipolar poset  of a $T$-transverse bipolar orientation, 
 and the small step walk attached to the corresponding face-step~$(-i,j)$.}
\label{fig:kmsw_proof_R3}
\end{figure}

\begin{lemma}\thlabel{res:transversal_addition_lcdt}
	Let~$B$ be a plane bipolar poset, and let~$f$ be an inner face of type~$(i,j)$ in $B$.  
	Then the admissible configurations for transversal edges within $f$ ---in transversal additions on $B$--- 
	are encoded by  walks from~$(0,0)$ to~$(-i,j)$ with steps in~$\{W,N,NW\}$ and starting
	with a step $NW$.
\end{lemma}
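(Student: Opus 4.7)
The plan is to mimic the proof of Lemma~\ref{res:transversal_completion}, the five face-types of Figure~\ref{fig:five_lcdt_faces} now dictating a larger step-set $\{W,N,NW\}$. I first classify these five types by their role inside $f$: two are red-red-blue triangles whose apex is the bottom (resp. top) extremal vertex of $f$, and three are faces bounded above and below by two consecutive transversal edges --- a blue-blue-red triangle whose two transversal edges share their right endpoint (type W), the analogous triangle sharing their left endpoint (type N), and a red-blue-red-blue quadrilateral (type NW). Using (T2), I observe that every interior vertex of the two lateral paths of $f$ is an endpoint of at least one transversal edge inside $f$: at such a vertex on the left (resp. right) lateral path, the outgoing (resp. ingoing) blue edges necessarily lie inside $f$ (by the clockwise arrangement prescribed by (T2)), and (T2) requires them to be non-empty. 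Consequently, the lowest (resp. highest) transversal edge joins the bottom-most (resp. top-most) interior vertices of the two lateral paths, and the bottom-most (resp. top-most) small face of $f$ is forced to be a red-red-blue boundary triangle.

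I then define the forward encoding. Starting at $(0,0)$, I sweep the small faces of $f$ from bottom to top, maintaining marked vertices $v_\ell$ and $v_r$; the bottom triangle places $v_\ell$ (resp. $v_r$) at the bottom-most interior vertex of the left (resp. right) lateral path and contributes the step NW. Each subsequent intermediate small face then contributes W, N, or NW according to its type, advancing $v_\ell$, $v_r$, or both to the next interior vertex along the corresponding lateral path. The sweep stops upon reaching the top triangle, without writing a step for it. Since $v_\ell$ traverses all $i$ and $v_r$ all $j$ interior lateral vertices, the walk terminates at $(-i,j)$; by the forced structure of the bottom face, it starts with NW.

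Conversely, from a walk of the prescribed form I reconstruct the configuration step by step: the initial NW places the first transversal edge between the bottom-most interior vertices, and each subsequent step places the next transversal edge between the updated marked vertices, creating in between a small face of the corresponding type. Once $v_\ell$ and $v_r$ reach the top-most interior vertices, the top triangle closes the configuration automatically. The two maps are inverse of each other by construction.

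The main obstacle is the use of (T2) in the first paragraph: it is what pins down the two boundary faces as red-red-blue triangles and hence forces the encoding walk to start with NW. Once this structural fact is in hand, the rest is a direct translation between small faces and walk steps, in the same spirit as Lemma~\ref{res:transversal_completion}.
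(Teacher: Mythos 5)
Your encoding is the same as the paper's: one transversal edge per walk step, the bottom and top red--red--blue triangles forced at the extremal vertices of $f$, the initial $NW$ accounting for the bottom triangle, and the three intermediate face types matching $W$, $N$, $NW$. The paper presents the walk-to-configuration direction in detail and dismisses the converse as clear; you do the reverse, but it is the same bijection.

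One point needs repair: your key structural fact (every interior vertex of a lateral path of $f$ is incident to a blue edge inside $f$, hence the extreme small faces are the boundary triangles and consecutive transversal edges advance by at most one vertex on each side) is justified by appealing to condition (T2). But (T2) is part of the definition of a \emph{transversal structure}, which is not the object in this lemma: a transversal addition is defined purely by the requirement that all faces within $f$ be of the five types of Figure~\ref{fig:five_lcdt_faces}, and the link to (T2) is only established afterwards in Proposition~\ref{prop:transversal}. Invoking (T2) here is therefore not available (and would make the later proposition circular). The fix is immediate from the actual hypothesis: among the five authorized face types, the only ones containing a corner formed by two red edges are the bottom and top triangles, whose red--red corner sits at an extremal vertex of $f$; so an interior lateral vertex with no incident blue edge inside $f$ would put a red--red corner on a small face at a non-extremal vertex, which is forbidden. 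With that substitution your argument is complete and matches the paper's.
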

\begin{proof}
Given a walk $\gamma$ from $(0,0)$ to~$(-i,j)$ with steps in~$\{W,N,NW\}$ and with $NW$ as first step, the transversal edges within $f$ are inserted from bottom to top 
while reading $\gamma$. Precisely, we maintain two marked vertices:~$v_\ell$ on the left lateral path of~$f$ and~$v_r$ on its right lateral path, initially~$v_\ell$ and~$v_r$ are both at the bottom vertex of~$f$. When reading
a step $(-\varepsilon_\ell, \varepsilon_r)\in\{W,N,NW\}$ in $\gamma$, we move up~$v_\ell$ (resp.~$v_r$) by~$\varepsilon_\ell$ edge (resp. by~$\varepsilon_r$ edge) on the left (resp. right) lateral path of~$f$; then we add an oriented transversal edge from~$v_\ell$ toward~$v_r$. Since $\gamma$ starts with $NW$, the first added transversal
edge forms with the bottom vertex of $f$ a face of the 1st type shown in Figure~\ref{fig:five_lcdt_faces}. In addition, every face enclosed between two consecutive transversal edges
has to be of the 2nd type (for each step $W$), or the 3rd type (for each step $N$), or the 5th type (for each step $NW$). Since $\gamma$ ends at $(-i,j)$ 
and $f$ has type $(i,j)$, the last added transversal edge forms with the top-vertex of $f$ a face of the 4th type. 

Conversely, every admissible configuration within $f$ clearly yields such a walk, 
by writing an initial step $NW$, then traversing the faces in $f$ from bottom to top and writing a step $W$ (resp. $N,NW$) each time a face of the 2nd (resp. 3rd, 5th) type is traversed.  
\end{proof}

A \emph{$T$-admissible tandem walk} is a tandem walk where each face-step $(-i,j)$ satisfies $i\geq 1$ and $j\geq 1$, 
and to each such step is attached a walk with steps in $\{W,N,NW\}$ starting with a step $NW$ considered as \emph{marked}, and with same starting and ending point as the face-step.

\begin{proposition}\label{prop:transversal}
Transversal structures of $WE$-type $(p,q)$, having $n$ inner vertices and $m$ red edges, are in bijection with $T$-transverse bipolar orientations of outer type $(p,q)$,
 having $m$ plain edges and $n+2$ vertices. These are in bijection to $T$-admissible tandem walks of length $m-1$ from $(0,p)$ to $(q,0)$, with $n$ SE steps. 
 Each quadrangular inner face in the transversal structure corresponds to an unmarked NW step in an attached walk of a face-step.  
\end{proposition}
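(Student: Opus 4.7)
My plan is to prove the proposition in three stages, mirroring the structure of the proof of Proposition~\ref{prop:posets_vertices}.

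Stage 1: set up the bijection between transversal structures and $T$-transverse bipolar orientations. Starting from a transversal structure $X$ of $WE$-type $(p,q)$, I delete the outer vertices $W$ and $E$ together with the four outer edges and the $p+q$ inner blue edges incident to $W\cup E$. By Lemma~\thref{res:T:acyclic_poset} the remaining red edges form a plane bipolar poset $P$ with source $S$ and sink $N$; its outer left lateral path is the red path $S\to v_1\to\cdots\to v_p\to N$ obtained by following the (former) face of $X_r$ around $W$, hence has length $p+1$, and symmetrically the right lateral path has length $q+1$, so $P$ has outer type $(p,q)$. The remaining blue edges, sitting inside each inner face of $P$, form a valid transversal addition because of the face-type constraint in Lemma~\thref{res:T:acyclic_poset}. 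Conversely, from a $T$-transverse bipolar orientation of outer type $(p,q)$ I reinsert $W$ on the left of the outer face, joined by outer edges to $S$ and $N$ and by blue edges to each of the $p$ interior vertices of the outer left path, and analogously for $E$ on the right; this uniquely reconstructs a transversal structure. These two operations are inverse to each other and match the parameters: an inner vertex of $X$ corresponds to a non-pole vertex of $P$, and an inner red edge of $X$ to an edge of $P$.

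Stage 2: compose with the KMSW bijection. Since $P$ is a plane bipolar poset, Proposition~\ref{prop:E-admissible} turns it into an $E$-admissible tandem walk of length $m-1$ from $(0,p)$ to $(q,0)$, with $n$ SE-steps (one per non-pole vertex of $P$) and one face-step $(-i,j)$ per inner face of $P$ of the same type. For each such face, Lemma~\thref{res:transversal_addition_lcdt} encodes the admissible transversal additions inside it by walks from $(0,0)$ to $(-i,j)$ with steps in $\{W,N,NW\}$ starting with a marked $NW$ step. Attaching this small-step walk to the face-step of the same type produces a $T$-admissible tandem walk, and reversing the construction decorates each face-step back into a transversal addition.

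Stage 3: identify quadrangular faces with unmarked $NW$ steps. In the encoding of Lemma~\thref{res:transversal_addition_lcdt}, the initial (marked) $NW$ step corresponds to the degenerate triangular face at the bottom of the face of $P$, whereas every subsequent $NW$ step corresponds to a face of the 5th type in Figure~\ref{fig:five_lcdt_faces}, i.e.\ a quadrangular one. Hence quadrangular inner faces of $X$ are in bijection with unmarked $NW$ steps across all attached walks.

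I expect the delicate point to be Stage~1, specifically verifying that the deletion/reinsertion of $W$ and $E$ cleanly converts the fixed outer 4-cycle of the transversal structure into the outer face of a plane bipolar poset of outer type exactly $(p,q)$, with all face-type constraints of Lemma~\thref{res:T:acyclic_poset} preserved. Once this is established, Stages~2 and~3 are direct applications of the preceding bijections.
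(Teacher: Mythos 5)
Your proposal is correct and follows essentially the same route as the paper: the addition/removal of $W,E$ and their incident blue edges (your Stage 1 is the paper's $\phi$/$\psi$ pair, just presented starting from the transversal-structure side), followed by the KMSW bijection together with Lemma~\thref{res:transversal_addition_lcdt} to reach $T$-admissible tandem walks, with quadrangular faces matching the non-initial (unmarked) $NW$ steps. The only point treated more lightly than in the paper is the verification that reinserting $W$ and $E$ restores conditions (T1) and (T2) — in particular that each inner vertex acquires exactly the required non-empty groups of ingoing and outgoing blue edges — which the paper checks explicitly using the five face types of Figure~\ref{fig:five_lcdt_faces}.
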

\begin{proof}
Starting from $M$ a $T$-transverse bipolar orientation of outer type $(p,q)$, with $B$ the underlying plane bipolar poset, 
we add vertices $W,E$ in the outer face of $B$, respectively on the left side and on the right side,
and for every vertex $v\notin\{S,N\}$ on the left (resp. right) boundary of $B$, we add a blue edge from $W$ to $v$ (resp. from $v$ to $E$). 
We then add  the edges $(W,N),(N,E), (E,S), (S,W)$ (unoriented, uncolored) to form an outer quadrangle. Let $X$ be the obtained structure. 
Clearly, Condition~(T1) is satisfied. In addition, the local condition at non-pole vertices of plane bipolar orientations ensures that 
each inner vertex $v$ of $X$ has a non-empty group of ingoing red edges, and a non-empty group of outgoing red edges. Let $f_{\ell}$
be the face of $B$ (possibly the ``left outer face'') incident to the left lateral corner of $v$, and let $f_r$ be the face of $B$ (possibly the ``right outer face")
incident to the right lateral corner of $v$. Since all inner faces of $X$ (upon recoloring red the outer edges and directing them as two paths from $S$ to $N$)
are of the type shown in Figure~\ref{fig:five_lcdt_faces}, all ingoing (resp. outgoing)  blue edges incident to $v$ have to be in $f_{\ell}$ (resp. $f_r$), and there 
has to be at least one such edge. Hence, Condition~(T2) is satisfied, so that $X$ is a transversal structure, of WE-type $(p,q)$. 
We let $\phi$ be the mapping that associates $X$ to $M$.  

Conversely, from $X$ a transversal structure of WE-type $(p,q)$, we simply remove $W,E$ and their incident edges. By Lemma~\ref{res:T:acyclic_poset}, we obtain a $T$-transverse bipolar orientation $M$. Let $\psi$ be the mapping that associates $M$ to $X$. Clearly, the two mappings are inverse of one another, hence give a bijection. We give in Figure~\ref{fig:t-transverse_example} an illustration of this bijection.

\begin{figure}
\begin{center}
\includegraphics[width=0.8\textwidth]{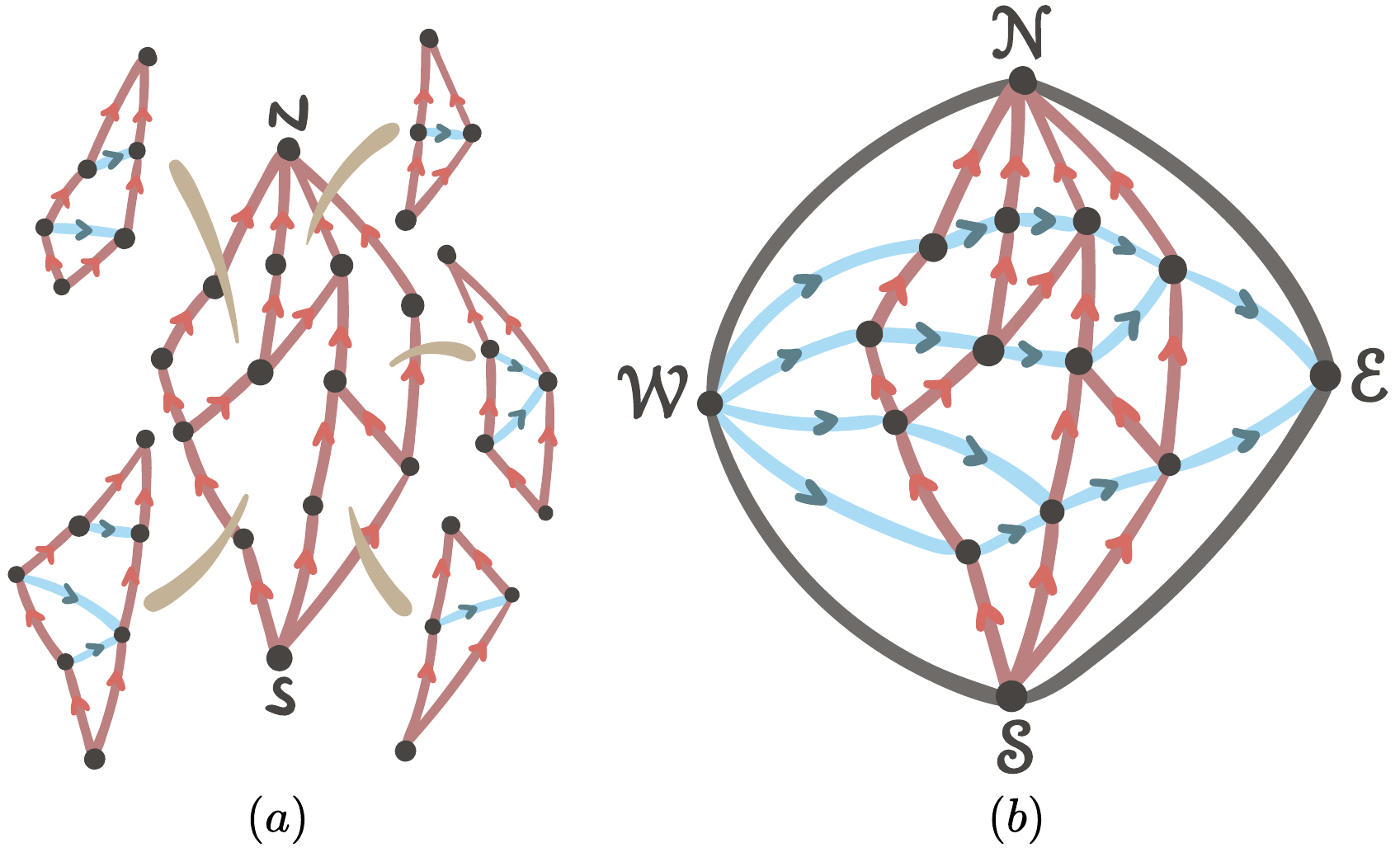}
\end{center}
\caption{(a) A $T$-transverse bipolar orientation. (b) The corresponding transversal structure.}
\label{fig:t-transverse_example}
\end{figure}  

 Applying the KMSW bijection to $T$-transverse bipolar orientations, and using Lemma~\ref{res:transversal_addition_lcdt}, 
we obtain the bijective correspondence with $T$-admissible tandem walks as stated. 
\end{proof}

\section{Exact counting results}\label{sec:exac}
In this section, we obtain exact enumeration results for plane bipolar posets and transversal structures. 
After rephrasing the results of the previous section in the general terminology of counting weighted tandem walks in the quadrant (Theorem~\ref{prop:weight}),  
we give a unified functional equation, which we call the master equation, for the generating function of weighted tandem walks in the quadrant. 
Manipulations on the master equation then imply that plane bipolar posets counted by edges are 
equinumerous to quadrant excursions for an explicit set of small steps (Proposition~\ref{prop:small_step_plane_posets_edges}), and that 
 plane bipolar posets counted by vertices are equinumerous to plane permutations (Proposition~\ref{prop:plane_perm}), which have been counted in~\cite{bouvel2018semi}. 
On the other hand, for transversal structures (with 
a weight-parameter for quadrangular inner faces), we resort to a rewriting of the corresponding tandem walks into a model of quadrant walks with small steps (and 
some forbidden patterns for consecutive steps), which yields an explicit recurrence for the counting coefficients (Proposition~\ref{prop:rec_transversal}).

For $w(i,j)$ a weight-function (with $(i,j)\in\mathbb{N}^2$), 
a \emph{$w$-weighted tandem walk} is a tandem walk where every face-step $(-i,j)$ has weight $w(i,j)$. 
The \emph{$w$-weight} of such a walk is the product of the weights of its face-steps; and for a given finite set of  walks, the associated \emph{$w$-weighted number} is 
the sum of $w$-weights of the walks in the set. 
Via the results obtained in the previous section, we can reformulate the enumeration
of plane bipolar posets and of transversal structures in terms of weighted enumeration of quadrant tandem walks. 
In the statement below, transversal structures are counted with weight $v$ per quadrangular inner face; accordingly we use the terminology of \emph{$v$-weighted number} of transversal structures. The case $v=0$ (resp. $v=1$) gives the enumeration of triangulated (resp. general) transversal structures.

\begin{theorem}\label{prop:weight}
Let $w(i,j)=\mathbf{1}_{i>0,j>0}$. Then the number of plane bipolar posets with $n$ edges and outer type $(a,b)$ 
is equal to the $w$-weighted number of quadrant tandem walks of length $n-1$ from $(0,a)$ to $(b,0)$.   

Let $w(i,j)=\binom{i+j}{i}$. Then the number of plane bipolar posets with $n+2$ vertices and pole-type $(p,q)$ is equal to the  
$w$-weighted number of quadrant tandem walks of length $n-1$ from $(0,p)$ to $(q,0)$. 

Let $w(i,j)=\sum_{r\geq 0}\frac{(i+j-2-r)!}{(i-1-r)!(j-1-r)!r!}v^r$ (so $w(i,j)=\binom{i+j-2}{i-1}$ for $v=0$).  
Then the $v$-weighted number of transversal structures of WE-type $(p,q)$, with $m$ red edges and $n+4$ vertices,  is equal to the $w$-weighted number of quadrant tandem walks
 of length $m-1$ from $(0,p)$ to $(q,0)$, having $n$ SE steps.    
\end{theorem}
\begin{proof}
This is a direct consequence of Proposition~\ref{prop:E-admissible}, Proposition~\ref{prop:posets_vertices}, and Proposition~\ref{prop:transversal}, respectively. 
Indeed, for plane bipolar posets counted by edges, the weight just filters those tandem walks with no face-step having a zero entry; while 
for plane bipolar posets counted by vertices, and for transversal structures,  the 
weight $w(i,j)$ corresponds to the number of ways to choose the walk attached to a face-step $(-i,j)$, as shown in Figure~\ref{fig:trans_completion} and Figure~\ref{fig:kmsw_proof_R3}.  
\end{proof}

Let $P^w_a(x,y)$ denote the generating series of $w$-weighted quadrant tandem walks starting in position $(0,a)$, with respect to the number of steps (variable $t$), end positions (variables $x$ and $y$) and number of SE steps (variable $u$). A last step decomposition immediately yields the following \emph{master equation} 
in the ring 
of formal power series in $t$ and $y$ with coefficients that are Laurent series in $\bar x=1/x$ (and polynomial in $u$): \vspace{-1em}
\begin{align*}
    P^w_a(x,y)&=y^a\!+ t u \frac x y \left(P^w_a(x,y)\!-\!P^w_a(x,0)\right)\!+\!t\sum_{i,j\geq0}\!w(i,j)\frac{y^j}{x^i}\left(\!\!P^w_a(x,y)\!-\!\sum_{k=0}^{i-1}x^{k}[x^k]P^w_a(x,y)\!\right)\\
    &=y^a\!+\!t u \frac x y \left(P^w_a(x,y)\!-\!P^w_a(x,0)\right)\!
      +\!tW_{0}(\bar x,y)P^w_a(x,y)\!
      -\!t\sum_{k\geq0}W_{k+1}(\bar x,y)x^{k}[x^k]P^w_a(x,y)\\[-2.5em]
\end{align*}
where $W_{k}(\bar x,y)=\sum_{i\geq k,j\geq0}w(i,j)\frac{y^j}{x^i}$. The subtracted terms correspond to the cases of adding a step that makes the walk
leave the quadrant. For the addition of a face-step $(-i,j)$, note that the walk leaves the quadrant iff it ends at some abscissa $k\in\{0,\ldots,i-1\}$.



\subsection{Plane bipolar posets by edges} The case of bipolar posets counted by edges  corresponds to having $w(i,j)=\mathbf{1}_{i\neq 0,j\neq 0}$ (cf Claim~\ref{claim:tra}). The master equation then becomes
\[
E_a(x,y)=y^a+t\frac{x}{y}\big(E_a(x,y)-E_a(x,0)\big)+t\frac{\bar{x}}{1-\bar{x}}\frac{y}{1-y}\big(E_a(x,y)-E_a(1,y)\big),
\]
and the coefficient $[t^n x^b]E_a(x,0)$ gives the number of plane bipolar posets of outer type $(a,b)$ with $n+1$ edges. 

By some algebraic manipulations similar to those performed in~\cite[Sect.5.2]{bouvel2018semi}, we can relate the enumeration of plane bipolar posets by edges to a simpler model of quadrant walks:
\begin{proposition}\label{prop:small_step_plane_posets_edges}
For $n\geq 1$, let $e_n$ be the number of plane bipolar posets with $n$ edges. Then $e_n$ is equal to the number of quadrant excursions of length $n-1$ with steps in $\{0,E,S,NW,SE\}$.
\end{proposition}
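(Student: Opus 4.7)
The plan is to transform the functional equation satisfied by the generating series of plane bipolar posets counted by edges into the master equation of a small-step quadrant walk model, and then conclude by uniqueness of the $t$-adic solution. First I would sum the master equation for $E_a(x,y)$ over $a\geq 0$ to obtain an equation for $E(x,y):=\sum_a E_a(x,y)$ (the initial term becomes $1/(1-y)$), noting that $e_n = [t^{n-1}] E(1,0)$. The key step is the change of variables
\[
\tilde{x} = x-1, \qquad \tilde{y} = \frac{y}{1-y},
\]
chosen so that $\frac{\bar{x}}{1-\bar{x}} = \frac{1}{\tilde{x}}$, $\frac{y}{1-y} = \tilde{y}$ and $\frac{1}{1-y} = 1+\tilde{y}$. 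Under it, the face-step block $\frac{t\bar{x}y}{(1-\bar{x})(1-y)}$ collapses into the single $NW$-step contribution $\frac{t\tilde{y}}{\tilde{x}}$, while the SE-term $\frac{tx}{y}$ expands as $\frac{t(1+\tilde{x})(1+\tilde{y})}{\tilde{y}} = t\bigl(\tfrac{1}{\tilde{y}} + \tfrac{\tilde{x}}{\tilde{y}} + 1 + \tilde{x}\bigr)$, i.e.\ the step polynomial of the four small steps $\{S, SE, 0, E\}$. Writing $\tilde{E}(\tilde{x},\tilde{y}) := E(1+\tilde{x}, \tilde{y}/(1+\tilde{y}))$, the transformed equation reads
\[
\tilde{E} = (1+\tilde{y}) + \frac{t(1+\tilde{x})(1+\tilde{y})}{\tilde{y}}\bigl(\tilde{E} - \tilde{E}(\tilde{x},0)\bigr) + \frac{t\tilde{y}}{\tilde{x}}\bigl(\tilde{E} - \tilde{E}(0,\tilde{y})\bigr),
\]
and $\tilde{E}(0,0) = E(1,0)$.

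Let $Q(\tilde{x},\tilde{y})$ be the length-generating series of quadrant walks with step set $\{0,E,S,NW,SE\}$ starting at the origin. A last-step decomposition yields
\[
Q = 1 + t(1+\tilde{x})Q + \frac{t(1+\tilde{x})}{\tilde{y}}\bigl(Q - Q(\tilde{x},0)\bigr) + \frac{t\tilde{y}}{\tilde{x}}\bigl(Q - Q(0,\tilde{y})\bigr),
\]
and the quantity I want to match with $e_n$ is $[t^{n-1}] Q(0,0)$. I would then guess and verify the identity $\tilde{E} = (1+\tilde{y})\, Q$: multiplying the $Q$-equation by $(1+\tilde{y})$ and using $\tfrac{1+\tilde{y}}{\tilde{y}} = 1 + \tfrac{1}{\tilde{y}}$ absorbs the $t(1+\tilde{x})Q$ summand into the SE-block, producing exactly the equation displayed above for $\tilde{E}$. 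Since both series equal $1+\tilde{y}$ at $t=0$, uniqueness of the coefficient-by-coefficient solution in $t$ forces the identification, and specializing at $\tilde{x}=\tilde{y}=0$ gives $e_n = [t^{n-1}] Q(0,0)$, as required.

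The main obstacle is spotting the correct change of variables. The factor $\bar{x}/(1-\bar{x})$ in the face-step block dictates $\tilde{x} = x-1$, and the symmetric $\tilde{y} = y/(1-y)$ is the natural choice on the $y$ side. The crucial piece of luck that makes the approach succeed is that the SE-term $tx/y$ simultaneously reorganizes into the step polynomial of exactly four small steps, so that the resulting small-step model is precisely $\{0, E, S, NW, SE\}$. Once the substitution is set, the identification $\tilde{E} = (1+\tilde{y})Q$ is easy to guess from the $t=0$ values and reduces to a short algebraic check.
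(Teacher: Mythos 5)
Your proposal is correct and takes essentially the same route as the paper: the identical change of variables $x=1+\tilde{x}$, $\bar{y}=1+1/\tilde{y}$, the same target small-step model $\{0,E,S,NW,SE\}$, and the same guess-and-verify of an algebraic relation between the two kernel equations (I checked that $(1+\tilde{y})Q$ does satisfy your transformed equation, and that $e_n=[t^{n-1}]E(1,0)$ with $E=\sum_a E_a$ is legitimate since only finitely many $a$ contribute to each coefficient). The only difference is cosmetic but pleasant: by summing over all starting ordinates $a$ rather than specializing to $a=1$ as the paper does (via gluing a left boundary path of length $2$), you obtain the clean identity $\tilde{E}=(1+\tilde{y})Q$ in place of the paper's shifted relation $Q(u,v)=\tfrac{v}{1+v}+t(1+u)-t^2+t^2(1+v)\widehat{Q}(u,v)$, avoiding the index bookkeeping at the end.
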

\begin{proof}
Note that, for $n\geq 2$, $e_n$ is also (by adding a path of length $2$ from source to sink on the left side) the number of plane bipolar posets with $n+2$ edges
and left boundary of length $2$, thus $e_n=[t^{n+1}]E_1(1,0)$. 
Let $Q(u,v):=E_1(x,y)$ under the change of variable relation $\{x=1+u, \bar{y}=1+\bar{v}\}$ (note that $E_1(1,0)=Q(0,0)$). 
Via the change of variable, the functional equation for $E_1(x,y)$ becomes
\[
Q(u,v)=1+t(1+u+\bar{v}+u\bar{v}+\bar{u}v)Q(u,v)-t(1+u)(1+\bar{v})Q(u,0)-t\bar{u}vQ(0,v).
\]
This resembles the functional equation for the series $\hQ(u,v)$ of quadrant walks (starting at the origin) with steps in $\{0,E,S,NW,SE\}$, whose functional equation is 
\[
\hQ(u,v)=1+t(1+u+\bar{v}+u\bar{v}+\bar{u}v)\hQ(u,v)-t(1+u)\bar{v}\hQ(u,0)-t\bar{u}v\hQ(0,v).
\]
By coefficient extraction in each of these two functional equations, we recognize that $Q(u,v)$ and $\hQ(u,v)$ appear to be related as  $Q(u,v)=\frac{v}{1+v}+t(1+u)-t^2+t^2(1+v)\hQ(u,v)$. This relation can then be easily checked. Indeed, if we substitute $Q(u,v)$ 
by $\frac{v}{1+v}+t(1+u)-t^2+t^2(1+v)\hQ(u,v)$ in the first functional equation, we recover the second functional equation (multiplied by $t^2(1+y)$).   
As a consequence $[t^n]Q(0,0)=[t^{n-2}]\hQ(0,0)$ for $n\geq 3$. Thus for $n\geq 2$, $e_n= [t^{n+1}]E_1(1,0)=[t^{n+1}]Q(0,0)=[t^{n-1}]\hQ(0,0)$
(and for $n=1$ one manually checks that $e_1=1=[t^0]\hQ(0,0)$). 
\end{proof}

While the series $E_1(1,0)$ is non D-finite as discussed in the next section, the reduction to a quadrant walk model with small steps ensures that the sequence $e_1,\ldots,e_n$ can be computed with time complexity $O(n^4)$ and using $O(n^3)$ bit space. The sequence starts as $1,1,1,2,5,12,32,93,279,872,2830,\ldots$, it is A363682 in~\cite{oeis}.


\subsection{Plane bipolar posets by vertices}
In the case of plane bipolar posets enumerated by vertices, by Proposition~\ref{prop:weight} we have $w(i,j)={i+j\choose i}$ for $i,j\geq0$, so that $W_k(\bar x,y)=\frac{1}{1-(\bar x+y)}\frac{\bar x^{k}}{(1-y)^{k}}$ in $\mathbb{Q}[[y,\bar x]]$. The master equation 
then rewrites
\begin{align*}
  B_a(x,y)
      &\!=\!y^a\!+ t \frac x y \left(B_a(x,y)\!-\!B_a(x,0)\right)\!
      +\!\frac{t}{1\!-\!y}\ \!\frac1{x\!-\!\frac{1}{1-y}}
      \left(xB_a(x,y)-\frac{1}{1\!-\!y}B_a\left(\frac1{1\!-\!y},y\right)\right),
      \label{master2}
\end{align*}
and the coefficient $[t^n x^b]B_a(x,0)$ gives the number of plane bipolar posets of pole-type $(a,b)$ with $n+3$ vertices.

From the functional equation, we now prove that plane bipolar posets counted by vertices are equinumerous to so-called \emph{plane permutations} introduced in~\cite{bousquet2007forest} and that have been recently studied in~\cite{bouvel2018semi}. These are the permutations avoiding the vincular pattern $2\underbracket[.5pt][1pt]{14}3$, 
i.e., with no pattern $2143$ where $1$ and $4$ are adjacent. Plane permutations also the permutations such that the dominance poset of the point diagram is planar.  

\begin{proposition}\label{prop:plane_perm}
Let $b_n$ denote the number of plane bipolar posets with $n+2$ vertices. Then $b_n$ is equal to the number of plane permutations on $n$ elements.
\end{proposition}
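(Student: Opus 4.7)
The plan is to show that the generating function $B(t) := \sum_{n \geq 0} b_n t^n$ agrees with the generating function $P(t) := \sum_{n \geq 0} p_n t^n$ for plane permutations by size, by matching functional equations. This is the "computational" proof advertised in the introduction, with the bijective proof deferred to Section~\ref{sec:bijec}.

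First I would package the family $\{B_a(x,y)\}_{a \geq 0}$ into a single series by introducing a catalytic variable $u$ marking $a$: set $\mathbf{B}(t;x,y,u) := \sum_{a\geq 0} u^a B_a(x,y)$, so that, by the identification just below Proposition~\ref{prop:weight}, the series of plane bipolar posets by vertices is recovered as an appropriate specialization (essentially $\mathbf{B}(t;1,0,1)$, up to a shift accounting for the $n+2$ vs.\ $n+3$ convention). Summing the master equation over $a \geq 0$ yields a functional equation for $\mathbf{B}$ in which the inhomogeneous term $\sum_a (uy)^a = 1/(1-uy)$ appears, together with the rational kernel coming from $W_k(\bar{x},y) = \bar{x}^k/((1-y)^k(1-\bar{x}-y))$ determined by $w(i,j)=\binom{i+j}{i}$. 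This produces a 3-catalytic-variable functional equation with two boundary unknowns $\mathbf{B}(t;x,0,u)$ and $\mathbf{B}(t;\tfrac{1}{1-y},y,u)$.

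Next I would invoke the functional equation for plane permutations derived in \cite{bouvel2018semi} (via the generating tree associated with their active-site statistic). After a rational change of variables in the spirit of the one used in the preceding proof that reduced $e_n$ to walks with small steps --- presumably a substitution coupling $x$ with $1/(1-y)$, and $u$ with $y$, to absorb the $1/(1-y)$-type factors --- the equation for $\mathbf{B}$ should take the same shape as the one governing plane permutations. I would then verify the match by substituting one series into the functional equation of the other and checking that the result is an identity (as was done with $Q(u,v)$ and $\widehat{Q}(u,v)$ in the proof for $e_n$).

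Finally, uniqueness of the solution concludes the proof: both functional equations are linear in the unknown series and, when expanded in powers of $t$, determine each coefficient (a polynomial in the catalytic variables) from the previous ones. Combined with agreement at $t^0$ (the empty/trivial object), this forces $\mathbf{B}$ and the plane-permutation series to coincide, hence $b_n = p_n$. The main obstacle is identifying the correct change of variables: the kernel $1-\bar x - y$ arising from the binomial weights $w(i,j)=\binom{i+j}{i}$ must be reconciled with the kernel appearing in \cite{bouvel2018semi}, and the algebraic bookkeeping around the divided-difference terms $\mathbf{B}(t;\tfrac{1}{1-y},y,u)$ is likely to be the most delicate step.
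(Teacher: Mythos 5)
Your overall strategy---deriving a functional equation from the master equation and matching it against the equation for plane permutations in \cite{bouvel2018semi}---is indeed the route the paper takes, but the proposal stops short of the two steps that constitute the actual proof, and your setup makes both harder. First, you do not need the whole family $\{B_a\}_{a\geq 0}$, nor a third catalytic variable: adding a new source of degree $1$ turns a plane bipolar poset with $n+2$ vertices into one of pole-type $(0,j)$ with $n+3$ vertices, so $b_n=[t^n]B_0(1,0)$ and only the single two-catalytic-variable series $B_0(x,y)$ is needed---matching the two catalytic variables of the target equation. Your series $\mathbf{B}(t;x,y,u)=\sum_a u^aB_a(x,y)$ carries the inhomogeneous term $1/(1-uy)$ (or $1/(1-y)$ after setting $u=1$), which no substitution will turn into the inhomogeneous term $tuv$ of \cite[Eq.~(2)]{bouvel2018semi}; you would then be forced into an extra rational-correction step of the type used for $Q$ versus $\widehat{Q}$ in the edge count, with no guarantee that one exists, and your target $[t^n]\mathbf{B}(t;1,0,1)=b_{n+1}$ is off by a shift besides.

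Second, and more importantly, the change of variables \emph{is} the proof, and you leave it unidentified. The correct one is $S(u,v):=v\bigl(B_0(v,1-\bar u)-1\bigr)$, i.e.\ $x=v$ and $y=1-1/u$; with it one gets $B_0(1,0)=1+S(1,1)$ and $\tfrac1{1-y}B_0\bigl(\tfrac1{1-y},y\bigr)=S(u,u)+u$, so the two boundary unknowns of the master equation become $S(1,v)$ and the diagonal $S(u,u)$, and the equation for $B_0$ becomes verbatim \cite[Eq.~(2)]{bouvel2018semi}. Your guess of ``coupling $x$ with $1/(1-y)$'' describes the locus $v=u$ on which the second boundary term is evaluated, not the substitution itself. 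Your final appeal to uniqueness of the power-series solution is correct and is implicitly how the identification concludes, but without exhibiting the substitution the argument remains a plan rather than a proof.
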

\begin{proof}
  Note that $b_n$ is also, by adding a new source of degree $1$
  (connected to the former source), the number of plane bipolar posets
  of pole-type $(0,j)$ with $n+3$ vertices and arbitrary $j\geq 0$, so
  that $b_n=[t^{n}]B_0(1,0)$. Let $S(u,v):=v(B_0(v,1-\bar u)-1)$, so
  that, under the change of variable $\{y=1-\bar u, x=v\}$, we have
  $S(u,v)=x(B_0(x,y)-1)$, $B_0(1,0)=1+S(1,1)$ and
  $\frac1{1-y}B_0\left(\frac1{1-y},y\right)=uB_0(u,1-\bar
  u)=S(u,u)+u$. The main equation for $B_0$ then rewrites
\begin{align*}
  S(u,v)&=x(B_0(x,y)-1)\\
      &\!=\!x(y^0-1)\!+ t \frac x y \left(xB_0(x,y)\!-\!xB_0(x,0)\right)\!
      +\!\frac{tx}{1\!-\!y}\ \!\frac1{x\!-\!\frac{1}{1-y}}
      \left(xB_0(x,y)-\frac{1}{1\!-\!y}B_0\left(\frac1{1\!-\!y},y\right)\right)\\
      &=\frac{tuv}{u-1}\left(S(u,v)+v-S(1,v)-v\right)
      +\frac{tuv}{v-u}\left(S(u,v)+v-S(u,u)-u\right)\\
      &=tuv+\frac{tuv}{1-u}\left(S(1,v)-S(u,v)\right)
      +\frac{tuv}{v-u}\left(S(u,v)-S(u,u)\right).
\end{align*}
This equation for $S(u,v)$ is  exactly \cite[Eq.~(2)]{bouvel2018semi} (they use $(x,y,z)$ for our $(t,u,v)$), where $[t^n]S(1,1)$ gives 
the number of plane permutations of size $n$. This concludes the proof, since  $b_n=[t^{n}]B_0(1,0)$ and $B_0(1,0)=1+S(1,1)$.
\end{proof}

\begin{remark}
It is shown in~\cite[Prop~13]{bouvel2018semi} that the generating function of plane permutations is D-finite, and in~\cite[Theo~14]{bouvel2018semi} that the number of plane permutations of size $n$ admits single sum expressions similar to the sum expression for the Baxter numbers. \hfill$\triangle$
\end{remark}

In Section~\ref{sec:bijec} we will give a direct bijective proof of Proposition~\ref{prop:plane_perm}, via a similar approach as in the bijection between Baxter permutations and plane bipolar orientations introduced in~\cite{bonichon2010baxter}.

\subsection{Transversal structures}
Finally, in the case of transversal structures, the weight-function is 
$w(i,j)=\sum_{r\geq 0}\frac{(i+j-2-r)!}{(i-1-r)!(j-1-r)!r!}v^r=[\bar x^iy^j]\frac{\bar x y}{1-\bar x -v\bar x y -y}$, 
 so that $W_0(\bar x,y)=W_1(\bar x,y)=\frac{\bar x y}{1-\bar x -v\bar x y-y}$, and more generally $W_{k+1}(\bar x,y)=\frac{y}{1-y}\frac{1}{x-\frac{1+vy}{1-y}}\left(\frac{\bar x(1+vy)}{1-y}\right)^k$. 
The master equation 
then rewrites
\begin{align*}
  T_a(x,y)
      &=y^a+ t u \frac x y \left(T_a(x,y)-T_a(x,0)\right)
      +\frac{ty}{1-y}\frac1{x-\frac{1+vy}{1-y}}
      \left(T_a(x,y)-T_a\left(\frac{1+vy}{1-y},y\right)\right),
\end{align*}
and $[u^{n}t^{m}]T_1(1,0)$ gives the $v$-weighted number of transversal structures with $n+4$ vertices and $m+1$ red edges, having WE-type $(1,j)$ for $j\geq 1$. 
Upon deleting the outer path $(S,W,N)$, this is also the $v$-weighted number of transversal structures (of arbitrary WE-type)  with $n+3$ vertices and $m-1$ red edges.  
As shown in the next section this series (in the variable $u$, with $t=1$) is non D-finite. However, similarly as for plane bipolar posets counted by edges, we can make the coefficient computation faster by reduction to a model of quadrant walks with small steps (however this time with some forbidden two-step sequences). 

\begin{proposition}\label{prop:rec_transversal}
Let $t_n(v)$ be the $v$-weighted number of transversal structures on $n+4$ vertices. 
Let $\tse_n(i,j), \tnw_n(i,j)$ be the coefficients given by the recurrence, valid
for $n\geq 1$:
\begin{equation}\label{eq:rec_trans}
  \left\{\begin{array}{l}
  \tse_n(i,j)=\tse_{n-1}(i-1,j+1)+\tnw_{n-1}(i-1,j+1),\\
  \tnw_n(i,j)=\tse_{n}(i+1,j-1)+(1+v)\,\tnw_{n}(i+1,j-1)+\tnw_{n}(i+1,j)+\tnw_{n}(i,j-1),
  \end{array}\right.
\end{equation} 
with boundary conditions $\tse_n(i,j)=\tnw_n(i,j)=0$ 
for any $(n,i,j)$ with $n\leq 0$ or $i<0$ or $j<0$ or $i>n$, with the exception (initial condition) of 
$\tse_0(0,1)=1$. 

Then, for $n\geq 1$, we have $t_n(v)=\tse_{n+2}(1,0)$. 
\end{proposition}
\begin{proof}
Upon adjoining the paths $SWN$ and $SEN$ to the red poset, $t_n(v)$ is the $v$-weighted number of transversal structures on $n+6$ vertices, with WE-type $(1,1)$. 
Let $\mathcal{N}_n(i,j)$ be the set of quadrant walks starting at $(0,1)$, ending at $(i,j)$, with steps in $\{SE,W,N,NW,{\bf NW}\}$ (with $\bf{NW}$ for the marked NW steps), 
having $n$ SE step, and such that no step in $\{N,W,NW\}$ comes after a $SE$ step. Such walks are counted with weight $v$ per step $NW$. 
By Proposition~\ref{prop:transversal}, $t_n(v)$ is the $v$-weighted number of walks in $\mathcal{N}_{n+2}(1,0)$. 
For $n\geq 1$ and $i,j\geq 0$, let $\tse_n(i,j)$ (resp. $\tnw_n(i,j)$) be the $v$-weighted number of walks in $\mathcal{N}_n(i,j)$ ending (resp. not ending) with a SE step. 
Then a last step decomposition ensures that $\tse_n(i,j),\tnw_n(i,j)$ satisfy the recurrence above. Note also that a quadrant walk in $\mathcal{N}_n(1,0)$
has to end with a SE step, hence the $v$-weighted number of these walks is $\tse_n(1,0)$. 
\end{proof}

 
Similarly as for plane bipolar posets counted by edges, for any fixed integer $v$,   
the recurrence makes it possible to compute the sequence $t_1(v),\ldots,t_n(v)$ with $O(n^4)$ bit operations using $O(n^3)$ bit space. This 
includes triangulated transversal structures (case $v=0$) and general transversal structures (case $v=1$). 
For the triangulated case, we obtain the same complexity order as the recurrence in~\cite{inoue2009counting}; 
another counting method is described in~\cite{reading2012generic} exploiting a bijection to a certain family of pattern-avoiding permutations. 
On the other hand, a counting method working for general $v$ is given in~\cite{conant2014number} using a growth process and inclusion-exclusion.   
The sequences for $v=0$ and $v=1$ appear as A342141 and A181594 in~\cite{oeis}, they start respectively as 
\[
1,2,6,24,116,642,3938,26194, 186042,\ldots\ \ \ \ \ \ \ 1, 2, 6, 25, 128, 758, 5014, 36194,280433,\ldots
\]
For general transversal structures with control on the number of quadrangular inner faces, the complexity to compute $t_1(v),\ldots,t_n(v)$ (this time seen as polynomials   
in $v$) is now $O(n^5)$ bit operations using $O(n^4)$ bit space. The sequence starts as
\[
1, 2, 6, 24 + v, 116 + 12v, 642 + 114v + 2v^2,  3938 + 1028v + 48v^2,  26194+9220v+ 770v^2+10v^3\ldots 
\]
The inclusion-exclusion method of~\cite{conant2014number} gives another polynomial-time procedure to compute these coefficients.
On the other hand, efficient encoding procedures for general transversal structures have been given in~\cite{saito2012two,takahashi2014compact}, and it should be possible to turn them into a recurrence for $t_n(v)$ (extending the one in~\cite{inoue2009counting}) with same complexity order as we obtain here.

\section{Asymptotic counting results}\label{sec:asymptotic}

We adopt here the method by Bostan, Raschel and Salvy~\cite{bostan2012} (itself relying on results by Denisov and Wachtel~\cite{denisov2015random}) to obtain asymptotic estimates for the counting coefficients of plane bipolar posets (by vertices or by edges) and transversal structures (by vertices). Let $\cS=\{SE\}\cup\{(-i,j),\ i,j\geq 0\}$ be the tandem step-set. Let $w:\mathbb{N}^2\to\mathbb{R}_+$ satisfying the symmetry property $w(i,j)=w(j,i)$. 
The induced weight-assignment on $\cS$ is $w(s)=1$ for $s=SE$ and $w(s)=w(i,j)$ for $s=(-i,j)$. 
Let $a_n^{(w)}$ be the weighted number (i.e., each walk $\sigma$ is counted with weight $\prod_{s\in \sigma}w(s)$) of quadrant tandem walks of length $n$, for some fixed starting and ending points. Let 
\[
S(z;x,y):=\frac{x}{y}z^{-2}+\sum_{i,j\geq 0}w(i,j)\frac{y^j}{x^i}z^{i+j},\]
 let $S(z):=S(z;1,1)$, and let $\rho$ be the radius of convergence (assumed here to be strictly positive) of $S(z)-z^{-2}$.   
Let $\tw(s):=\frac1{\gamma}w(s)z_0^{y(s)-x(s)}$ be the modified weight-distribution where $\gamma,z_0>0$ are adjusted so that $\tw(s)$ is a probability distribution (i.e. $\sum_{s\in\cS} \tw(s)=1$) and the drift is zero, which is here equivalent to having $z=z_0\in(0,\rho)$ solution of $S'(z)=0$ (one solves first for $z_0$ and then takes $\gamma=S(z_0)$;  
note also that $S'(z)$ is increasing on $(0,\rho)$ so that $z_0$ is unique if it exists). 
Then as shown in~\cite{denisov2015random,bostan2012} we have, for some $c>0$,
\begin{equation}
a_n^{(w)}\sim c\ \!\gamma^n\ \!n^{-\alpha},\ \mathrm{where}\ \alpha=1+\pi/\mathrm{arccos}(\xi),\ \mathrm{with}\ \xi=-\frac{\partial_x\partial_yS(z_0;x,y)}{\partial_x \partial_xS(z_0;x,y)}\Big|_{x=1,y=1}.
\end{equation}  
As shown in~\cite{denisov2015random}, the dependence of $c$ on the starting point $(i_0,j_0)$ and ending point $(i_1,j_1)$ is of the form  $c=\kappa\ \!g(i_0,j_0)g(j_1,i_1)$ 
for some absolute constant $\kappa>0$ and a function $g$ from $\mathbb{N}^2$ to $\mathbb{R}_{>0}$,
(which is a discrete harmonic function for the walk model). In the special case where the starting point is $(0,p)$ and 
ending point is $(q,0)$, the dependence of $c$ on $p$ and $q$ is thus of the form $c=\kappa\ \!f(p)f(q)$ for some function $f$ from $\mathbb{N}$ to $\mathbb{R}_{>0}$, in which case we say that $c$ has separate dependence on $p$ and $q$.  

\begin{proposition}[Plane bipolar posets counted by edges]\label{prop:asympt_posets_edges} 
For fixed $p,q\geq 1$, let $e_n(p,q)$ be the number of plane bipolar posets 
of outer type $(p,q)$ with $n$ edges. Let $z_0\approx 0.54$ be the unique positive root of $z^4+z^3-3z^2+3z-1$. Let $\gamma=5z_0^3+7z_0^2-13z_0+9\approx 4.80$, $\xi=1-z_0/2\approx 0.73$, and $\alpha=1+\pi/\mathrm{arccos}(\xi)\approx 5.14$.
Then there exists a positive constant $c$ (with separate dependence on $p$ and $q$) such that 
\[
e_n(p,q)\sim c\ \!\gamma^n\ \!n^{-\alpha}.
\]
Moreover, the associated generating function $\sum_{n\geq 1}e_n(p,q)z^n$ is not D-finite. These results also apply to the coefficients $e_n$ (number of plane bipolar posets with $n$ edges) via the relation $e_n=e_{n+4}(1,1)$ (valid for $n\geq 2$).  
\end{proposition}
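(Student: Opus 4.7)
The plan is to apply the Denisov--Wachtel / Bostan--Raschel--Salvy framework recalled just before the statement, with weight function $w(i,j)=\mathbf{1}_{i>0,\,j>0}$. By Proposition~\ref{prop:weight}, $e_n(p,q)$ is exactly the $w$-weighted number of quadrant tandem walks of length $n-1$ from $(0,p)$ to $(q,0)$; the weight is symmetric in $(i,j)$, so the framework applies, and it remains to identify $\gamma,\xi,\alpha$ in closed form.

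First I would compute the step-generating series. Summing the two geometric series yields
\[
S(z;x,y)=\frac{x}{yz^2}+\frac{z/x}{1-z/x}\cdot\frac{yz}{1-yz},\qquad S(z)=S(z;1,1)=\frac{1}{z^2}+\frac{z^2}{(1-z)^2},
\]
so $S(z)-z^{-2}$ has radius of convergence $\rho=1$. A short computation reduces $S'(z)=0$ to $z^4=(1-z)^3$, i.e.\ to the quartic $z^4+z^3-3z^2+3z-1=0$, which has a unique root $z_0\approx 0.54$ in $(0,1)$ (by monotonicity of $S'$ on $(0,\rho)$).

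Next I would extract $\gamma$ and $\xi$ via the defining polynomial of $z_0$. From $(1-z_0)^3=z_0^4$ one obtains $z_0^2/(1-z_0)^2=(1-z_0)/z_0^2$, hence $\gamma=S(z_0)=(2-z_0)/z_0^2$; expanding $z_0^2(5z_0^3+7z_0^2-13z_0+9)$ and reducing modulo the quartic recovers exactly $2-z_0$, yielding the polynomial form $\gamma=5z_0^3+7z_0^2-13z_0+9$. Differentiation of $S(z;x,y)$ gives
\[
\partial_x\partial_y S(z;1,1)=-\frac{1}{z^2}-\frac{z^2}{(1-z)^4},\qquad \partial_x^2 S(z;1,1)=\frac{2z^2}{(1-z)^4},
\]
hence $\xi=(1-z_0)^4/(2z_0^4)+1/2$; applying $(1-z_0)^3=z_0^4$ once more collapses the first term to $(1-z_0)/2$, so $\xi=1-z_0/2$ and $\alpha=1+\pi/\arccos(1-z_0/2)$. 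The claimed asymptotic $e_n(p,q)\sim c\gamma^n n^{-\alpha}$, with $c$ depending separately on $p$ and $q$, then follows from the framework.

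For non-D-finiteness, I would invoke the approach of~\cite{bostan2012}: it suffices to prove that $\alpha$ is irrational, and since $\xi=1-z_0/2$ is algebraic of degree $4$, a short verification (comparing its minimal polynomial with those of $\cos(k\pi/q)$ for the finitely many $q$ with $\phi(2q)=8$) rules out $\arccos(\xi)/\pi\in\mathbb{Q}$, so $\alpha$ is irrational. Finally, for the relation $e_n=e_{n+4}(1,1)$ with $n\geq 2$, I would use the following ``framing'' bijection: given any plane bipolar poset $B$ with $n$ edges (which for $n\geq 2$ automatically has outer type $(a,b)$ with $a,b\geq 1$), adjoin two new vertices $v_L,v_R$ in the outer face and four new edges $S\to v_L\to N$ and $S\to v_R\to N$; the resulting object has outer type $(1,1)$, exactly $n+4$ edges, and its two newly created inner faces (of types $(1,a)$ and $(b,1)$) have no zero entry, so it is again a plane bipolar poset. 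The main technical obstacles are the algebraic reductions modulo the quartic (straightforward but requiring care) and, more substantively, the verification that the Denisov--Wachtel hypotheses (notably finiteness of exponential moments under the Cram\'er tilt) are satisfied on the infinite step set $\{SE\}\cup\{(-i,j):i,j\geq 1\}$.
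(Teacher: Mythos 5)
Your proposal is correct and follows essentially the same route as the paper: reduce to the $w$-weighted tandem-walk model via Proposition~\ref{prop:weight}, compute $S(z;x,y)$, locate $z_0$ as the root of $z^4+z^3-3z^2+3z-1$, reduce $\gamma=S(z_0)=(2-z_0)/z_0^2$ and $\xi=1-z_0/2$ modulo the quartic, invoke the Denisov--Wachtel/Bostan--Raschel--Salvy estimate, and use the same outer-path-framing bijection for $e_n=e_{n+4}(1,1)$ (your observation that $a,b\geq 1$ for $n\geq 2$ correctly justifies that the two new inner faces have no zero entry). The only cosmetic difference is the irrationality check for $\alpha$: you compare the degree-$4$ minimal polynomial of $\xi$ against the finitely many $\cos(k\pi/q)$ of that degree, whereas the paper verifies that the numerator of $X(\tfrac12(\zeta+\zeta^{-1}))$ has no cyclotomic factor --- two equivalent forms of the same criterion from~\cite{bostan2012}; and the exponential-moment condition you flag is ensured by $z_0\in(0,\rho)$ as set up in the framework.
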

\begin{proof}
This case corresponds to taking $w(i,j)=\mathbf{1}_{i\neq 0,j\neq 0}$. By Proposition~\ref{prop:weight}, 
$e_n(p,q)$ is the number of $w$-weighted tandem walks of length $n-1$ from $(0,p)$ to $(q,0)$. We have
\[S(z;x,y)=\frac{x}{y}z^{-2}+\frac{z/x}{1-z/x}\frac{zy}{1-zy}.\]
Hence $S(z)-z^{-2}=\frac{z^2}{(1-z)^2}$, of radius of convergence $\rho=1$. Next we have $S'(z)=\frac{2(z^4+z^3-3z^2+3z-1)}{z^3(1-z)^3}$; and thus $S'(z)$ has a positive root in $(0,1)$ which is the unique positive root $z_0\approx 0.54$ of the polynomial $P(z)=z^4+z^3-3z^2+3z-1$. 
Since $P(z)$ is the minimal polynomial of $z_0$, any rational expression (with coefficients in $\mathbb{Q}$) in $z_0$ reduces to a polynomial 
expression in $z_0$ of degree smaller than $\mathrm{deg}(P)=4$. Clearly, $\gamma=S(z_0)$ and $\xi=-\frac{\partial_x\partial_yS(z_0;x,y)}{\partial_x \partial_xS(z_0;x,y)}\big|_{x=1,y=1}$ have rational expressions in $z_0$ (since $S(z;x,y)$ is rational in $x,y,z$), and the expressions we obtain by reduction are $\gamma=5z_0^3+7z_0^2-13z_0+9$ and $\xi=1-z_0/2$. 

For a generating function $\sum_na_nz^n$ with $a_n\in\mathbb{Z}$, a known sufficient condition for being not D-finite is that $a_n\sim c\ \gamma^nn^{-\alpha}$ with $\alpha\notin\mathbb{Q}$. As shown in~\cite{bostan2012}, if $\xi$ is an algebraic number with minimal polynomial $X(s)$, then 
  $\alpha:=1+\pi/\mathrm{arccos}(\xi)$ is rational iff (the numerator of) $X(\tfrac1{2}(\zeta+\zeta^{-1}))$ has a cyclotomic polynomial among its prime factors. Here the minimal polynomial of $\xi$ is $X(s)=P(2-2s)$ (since $z_0=2-2\xi$), and the numerator of $X(\tfrac1{2}(\zeta+\zeta^{-1}))$ is the prime polynomial $\zeta^8-9\,\zeta^7+31\,\zeta^6-62\,\zeta^5+77\,\zeta^4-62\,\zeta^3+31\,\zeta^2-\zeta+1$ which is not cyclotomic since, as recalled in~\cite{bostan2012}, all cyclotomic polynomials of degree at most $30$ have their coefficients in $\{-2,-1,0,1,2\}$. Hence for any fixed $p,q\geq 1$, the generating function $\sum_{n\geq 1}e_n(p,q)z^n$ is not D-finite.

Finally, the claimed relation $e_n=e_{n+4}(1,1)$ follows from the fact that, for $n\geq 2$, a plane bipolar poset with $n$ edges identifies to a plane bipolar poset of outer type $(1,1)$ with $n+4$ edges, upon adding a left outer path of length $2$ and a right outer path of length $2$. 
\end{proof}

\begin{proposition}[Plane bipolar posets counted by vertices]\label{prop:asympt_posets_vertices} For fixed $p,q\geq 0$, let $b_n(p,q)$ be the number of plane bipolar posets 
of pole-type $(p,q)$ with $n+2$ vertices. 
Then there exists a positive constant $c$ (with separate dependence on $p$ and $q$) such that 
\[
b_n(p,q)\sim c\ \!\big(\tfrac{11+5\sqrt{5}}{2}\big)^n\ \!n^{-6}.
\]
These results also apply to the coefficient $b_n$ (number of plane bipolar posets with $n+2$ vertices) via the relation $b_n=b_{n+2}(0,0)$.  
\end{proposition}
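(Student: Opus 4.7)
The plan is to apply the Bostan--Raschel--Salvy asymptotic framework recalled at the start of Section~\ref{sec:asymptotic}, exactly as in the proof of Proposition~\ref{prop:asympt_posets_edges}. By Proposition~\ref{prop:weight}, $b_n(p,q)$ equals the $w$-weighted number of quadrant tandem walks of length $n-1$ from $(0,p)$ to $(q,0)$ with weight $w(i,j)=\binom{i+j}{i}$, which satisfies the required symmetry $w(i,j)=w(j,i)$.

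Using $\sum_{i,j\geq 0}\binom{i+j}{i}a^ib^j=(1-a-b)^{-1}$ with $a=z/x$, $b=yz$, I obtain
\[
S(z;x,y)=\frac{x}{yz^2}+\frac{1}{1-z/x-yz},
\]
so $S(z)=z^{-2}+(1-2z)^{-1}$ and $\rho=1/2$. Solving $S'(z)=0$ reduces to $(1-2z)^2=z^3$, equivalently $z^3-4z^2+4z-1=(z-1)(z^2-3z+1)=0$; the unique root in $(0,\rho)$ is $z_0=(3-\sqrt{5})/2$, the smallest root of $z^2-3z+1$. Two consequences of the minimal polynomial do all the work below: the identity $(1-2z_0)^2=z_0^3$, and the values $(1-2z_0)^{-1}=\sqrt{5}+2$ and $z_0^{-2}=(7+3\sqrt{5})/2$. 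Combining these gives $\gamma=S(z_0)=(11+5\sqrt{5})/2$.

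For the exponent $\alpha$, a direct calculation yields
\[
\partial_x\partial_y S(z;1,1)=-\frac{1}{z^2}-\frac{2z^2}{(1-2z)^3},\qquad \partial_x^2 S(z;1,1)=\frac{2z(1-z)}{(1-2z)^3}.
\]
Placing $-\partial_x\partial_y S(z_0;1,1)$ over the common denominator $z_0^2(1-2z_0)^3$ and using $(1-2z_0)^2=z_0^3$, the numerator becomes $(1-2z_0)^3+2z_0^4=z_0^3[(1-2z_0)+2z_0]=z_0^3$, so the ratio collapses to $\xi=1/(2(1-z_0))=(1+\sqrt{5})/4$. Recognising this value as $\cos(\pi/5)$ gives $\alpha=1+\pi/\arccos(\xi)=6$. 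The separate dependence of $c$ on $(p,q)$ is inherited from the general statement, and the identity $b_n=b_{n+2}(0,0)$ follows from attaching a pendant source below $S$ and a pendant sink above $N$. The only nonmechanical step is the collapse of $\xi$ to $\cos(\pi/5)$, which is driven entirely by the minimal-polynomial identity $(1-2z_0)^2=z_0^3$.
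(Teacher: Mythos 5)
Your proposal is correct and follows essentially the same route as the paper: apply the Bostan--Raschel--Salvy framework to the weight $w(i,j)=\binom{i+j}{i}$ from Proposition~\ref{prop:weight}, compute $S(z;x,y)$, locate $z_0=(3-\sqrt5)/2$ as the root of $z^2-3z+1$ in $(0,1/2)$, and evaluate $\gamma$ and $\xi$ there. Your closed form $\xi=1/(2(1-z_0))$ agrees with the paper's $\xi=1-z_0/2$ modulo the minimal polynomial, and your extra step of recognising $\xi=\cos(\pi/5)$ is a clean way to justify $\alpha=6$, which the paper simply asserts.
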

\begin{proof}
This case corresponds to  $w(i,j)=\binom{i+j}{i}$. By Proposition~\ref{prop:weight}, $b_n(p,q)$ is the number of $w$-weighted tandem walks of length $n$ from $(0,p)$ to $(q,0)$. 
We have 
\[
S(z;x,y)=\frac{x}{y}z^{-2}+\frac{1}{1-z/x-zy}.
\]
Hence $S(z)-z^{-2}=1/(1-2z)$ of radius of convergence $\rho=1/2$. Next, we have $S'(z)=\frac{2(z-1)(z^2-3z+1)}{(z^3(1-2z)^2}$ thus $S'(z)$ has a positive root in $(0,1/2)$ which is the unique positive root $z_0=\frac{3-\sqrt{5}}{2}\approx 0.38$ of the polynomial $z^2-3z+1$. Any rational expression in $z_0$ thus reduces to an expression $a+a'\sqrt{5}$ with $a,a'$ in $\mathbb{Q}$.   We find 
$\gamma=\tfrac1{2}(11+5\sqrt{5})\approx 11.09$, and   
$\xi =\tfrac1{4}(1+\sqrt{5})\approx 0.81$ (we actually have $\xi=1-z_0/2$, as for plane bipolar posets counted by edges), and $\alpha=1+\pi/\mathrm{arccos}(\xi)=6$. 

Finally, the claimed relation $b_n=b_{n+2}(0,0)$ follows from the fact that for $n\geq 0$ a plane bipolar poset with $n+2$ vertices identifies to a plane bipolar poset of pole-type $(0,0)$ with $n+4$ vertices, upon creating a new sink (resp. a new source) connected to the former sink (resp. source) by a single edge. 
\end{proof}
\begin{remark}
We recover, as expected in view of the previous section, the asymptotic constants $\gamma$ and $\alpha$ for plane permutations, which were obtained in~\cite{bouvel2018semi} (where $c$ was also explicitly computed). \hfill$\triangle$
\end{remark}

\begin{theorem}[Transversal structures]\label{theo:asympt_trans_structures}
Let $v\in[0,+\infty)$. Let $\gamma(v)>1$, $\xi(v)\in(0,1)$ be given by (see Figure~\ref{fig:plots} for plots)
\[
\gamma(v)=\frac {1}{2(2+v)} \big(2v^{2}+18\,v+27+(9+4v)^{3/2}\big),\ \ \ \xi(v)=\frac {1}{4(2+v)^2} \left(4v^{2}+14v+11+
\sqrt {9+4v} \right),
\]
and let $\alpha(v)=1+\pi/\mathrm{arcos}(\xi(v))$. For fixed $p,q\geq 0$, let $t_n^{(p,q)}(v)$ be the $v$-weighted number of transversal structures of WE-type $(p,q)$ with $n+4$ vertices.  

Then there exists a positive constant $c(v)$ (with separate dependence on $p$ and $q$) such that 
\[
t_n^{(p,q)}(v)\sim c(v)\ \!\gamma(v)^nn^{-\alpha(v)}.
\]
These results also apply to the $v$-weighted number $t_n(v)$ of transversal structures with $n+4$ vertices, via the relation $t_n(v)=t_{n+2}^{(1,1)}(v)$.  

For $v=0$ (triangulated case) and $v=1$ (general case), we have
\[
\gamma(0)=\frac{27}{2},\ \xi(0)=\frac{7}{8},\ \alpha(0)\approx 7.22,\ \ \ \ \gamma(1)=\frac{47+13\sqrt{13}}{6},\ \xi(1)=\frac{29+\sqrt{13}}{36},\ \alpha(1)\approx 8.18.
\]
In both cases, the associated generating function $\sum_{n\geq 1}t_n(v) z^n$ is not D-finite.
\end{theorem}
\begin{figure}
	\center
	\begin{subfigure}[b]{0.5\textwidth}
		\center
		\includegraphics[width=\textwidth]{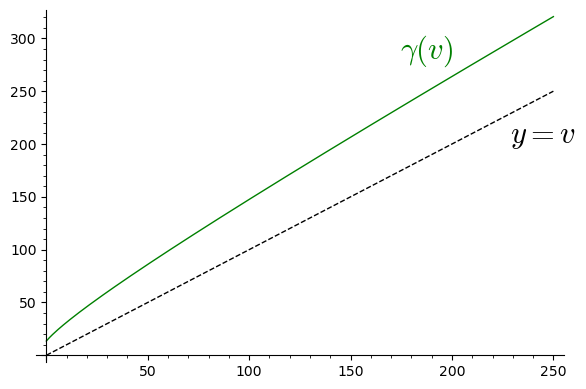}
	\end{subfigure}%
	\begin{subfigure}[b]{0.5\textwidth}
		\center
		\includegraphics[width=\textwidth]{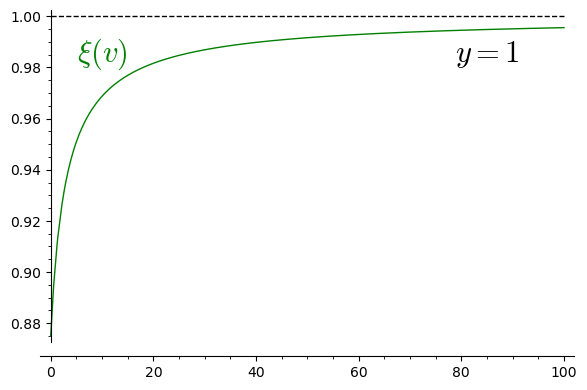}
	\end{subfigure}
	\caption{The plot of~$\gamma(v)$ (on the left) and~$\xi(v)$ (on the right) and their respective equivalents~$v$ and~$1$.}
	\label{fig:plots}
\end{figure}
\begin{proof}
For $v=0$, we have $w(i,j)=\binom{i+j-2}{i-1}$, and the associated series is 
\[
\tilde{S}(z;x,y)=x\bar y z^{-2}+\frac{z^2\bar x y}{1-z(\bar x+v\bar x y+y)}. 
\]
By Proposition~\ref{prop:weight}, $t_n^{(p,q)}(v)$ is the $w$-weighted number of tandem walks 
from $(0,p)$ to $(q,0)$ with $n$ SE steps. Consider such a walk $\pi$. Since there is no step of the form $(0,j)$ (such a step has weight $0$ here) it starts with a SE step, and since there is no step of the form $(-i,0)$, it also ends with a SE step. 
Let $\tilde{\pi}$ be the walk obtained from $\pi$ by deleting the last SE step, and aggregating the other steps into groups formed by a SE step followed by a (possibly empty) sequence of non-SE steps; precisely each such group $s_1,\ldots,s_k$ yields the aggregated step $\sigma=s_1+\cdots+s_k$ with weight $w(\sigma)=\prod_{i=1}^kw(s_i)$. The obtained weighted walk $\tilde{\pi}=(\sigma_1,\ldots,\sigma_n)$ 
starts at $(0,p)$, ends at $(q-1,1)$ (since the last SE step of $\pi$ has been deleted), and the condition that $\pi$ stays in the quadrant $\{x\geq 0; y\geq 0\}$ translates to the condition that $\tilde{\pi}$ stays in the shifted quadrant $\{x\geq 0; y\geq 1\}$ (indeed, $\pi$ stays in the quadrant iff the starting point of every SE step is in the shifted quadrant).    Note also that the series corresponding to one aggregated step is 
\[
S(z;x,y)=\frac{x\bar y\ \!z^{-2}}{1-\frac{z^2\bar x y}{1-z(\bar x+v\bar xy +y)}}.
\]
Hence $S(z)-z^{-2}=\frac{1}{1-2z-z^2}$, of radius of convergence $\rho(v)=\frac{\sqrt{v+2}}{v+1}-1$. 
The function $S(z)$, defined on $(0,\rho(v))$, is convex, and diverges as $z\to 0^+$ and $z\to\rho(v)^-$, 
hence reaches its minimum at the unique value $z=z_0(v)\in(0,\rho(v))$, where $S'(z)=0$. 
We have
\[
S'(z)=-2\,{\frac { \left( v{z}^{2}+{z}^{2}+z-1 \right)  \left( v{z}^{2}+3\,z
-1 \right) }{ \left( v{z}^{2}+{z}^{2}+2\,z-1 \right) ^{2}{z}^{3}}}
\]
The single root of the numerator in the interval $(0,\rho(v))$ is $z_0(v)=\frac{-3+\sqrt{9+4v}}{2v}$, which is one of the two roots of $v{z}^{2}+3\,z
-1$. Note that it is a regular function at $v=0$: $z_0(v)=\frac{1}{3} - \frac{1}{27}\, v + \frac{2}{243}\, v^2+O(v^3)$. We then find
$\gamma(v)=S(z_0(v))=\frac {1}{2(2+v)} \big(2v^{2}+18\,v+27+(9+4v)^{3/2}\big)$, and $\xi(v)=\frac {1}{4(2+v)^2} \left(4v^{2}+14v+11+
\sqrt {9+4v} \right)$.

Let $v\in\{0,1\}$. To show that the associated series $\sum_n t_n(v)z^n$ is not D-finite, we consider the minimal polynomial $X(s)$ of the algebraic number $\xi(v)$.
For $v=0$, we find $X(s)=8s-7$. We have 
 $X(\tfrac1{2}(\zeta+\zeta^{-1}))=(4\zeta^2-7\zeta+4)/\zeta$, whose numerator is not cyclotomic, as it is irreducible of degree $2$ with coefficients of absolute value larger than $2$. Hence, $\sum_n t_n(0)z^n$ is not D-finite according to~\cite{bostan2012}. For $v=1$, we find $X(s)=36s^2-58 s+23$, and $X(\frac1{2}(\zeta+\zeta^{-1}))=\frac1{36\,{x}^{2}}\big(36\,{x}^{4}-58\,{x}^{3}+95\,{x}^{2}-58\,x+36\big)$, whose numerator is not cyclotomic, as it is irreducible of degree~$4$ with coefficients of absolute value larger than $2$. Hence, $\sum_n t_n(1)z^n$ is not D-finite
\end{proof}

\begin{figure}
\begin{center}
\includegraphics[width=8cm]{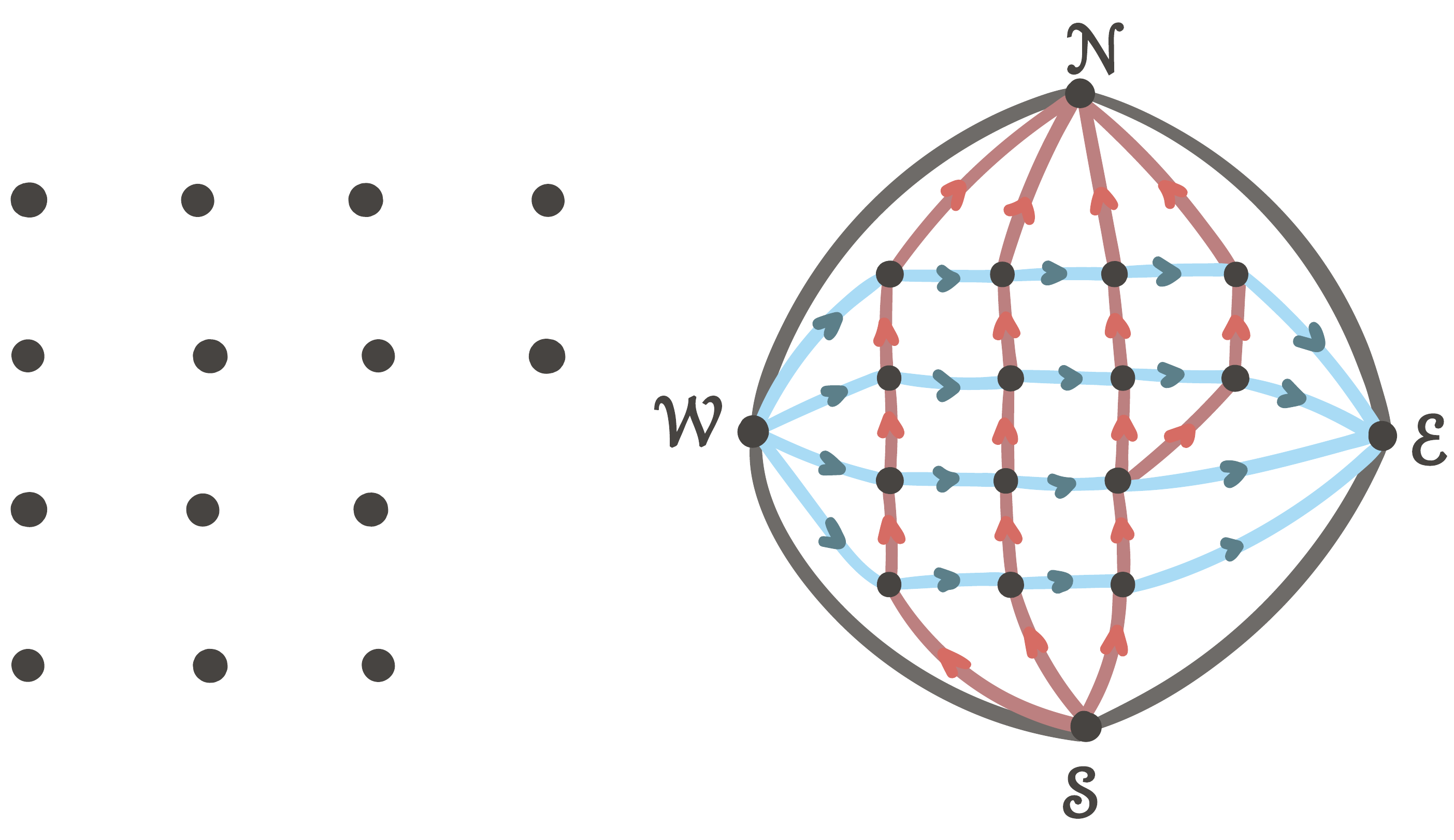}
\end{center}
\caption{Construction of a transversal structure with $n$ inner vertices and $n-\Theta(\sqrt{n})$ quadrangular faces. Letting $h=\lfloor \sqrt{n}\rfloor$, we take the $h\times h$ grid $G_h$ (regular grid with $h^2$ points), and complete it by adding the $n-h^2$ first points of the hook $G_{h+1}\backslash G_{h}$ (traversed starting with the top row left-to-right and then the right column top-to-bottom). 
We then build a transversal structure with $n$ inner vertices at the $n$ points as indicated; its number of quadrangular inner faces is $n-2h+\delta$, with $\delta=1$ for $n=h^2$, 
$\delta=0$ for $n-h^2\in[1..h]$, and $\delta=-1$ for $n-h^2\in[h+1..2h]$.}
\label{fig:grille}
\end{figure}

\begin{remark}
Our estimate implies that $t_n(0)/t_{n-1}(0)$ converges to $27/2$, which had been conjectured in~\cite{inoue2009counting} based on numerical computations.  
Another consequence of our estimate is that the coding procedure in~\cite{takahashi20094n} for triangulated transversal structures can be made asymptotically optimal,
 as it implies that the asymptotic growth rate of $t_n(0)$ is bounded by $27/2$. 
 
 Let us also mention that the recurrence for $t_n(0)$ obtained in~\cite{inoue2009counting} actually gives another quadrant walk model to obtain the asymptotic estimate for $t_n(0)$ 
 (via Denisov-Wachtel).  
Indeed, although not in a completely bijective way (simplifications occur from algebraic manipulations), the recurrence in~\cite{inoue2009counting} implies that $t_n(0)$ is the number 
of walks of length $n-1$ in the $1/8$th plane $\{0\leq y\leq x\}$, starting and ending at the origin, where the series for the set of steps is $S(x,y):=\frac1{1-y}(x+2\by+\bx\by)$. 
Equivalently, $t_n(0)$ is the number of walks of length $n-1$ in the quadrant, starting and ending at the origin, where the series for the 
set of steps is $S(x,y):=\frac1{1-\bx y}(x+2x\by+\by)$. As in our walk model, the symmetry $S(x,y)=S(\by,\bx)$ holds. Accordingly, we let $S(z;x,y):=S(x/z,yz)$,
and $S(z)=S(z;1,1)$.  We find that $S'(z)=0$ for $z=z_0:=2/3$, where we have $\gamma:=S(z_0)=27/2$ and $\xi:=-\frac{\partial_x\partial_yS(z_0;x,y)}{\partial_x \partial_xS(z_0;x,y)}\big|_{x=1,y=1}=7/8$.

Finally, we note that the recurrence obtained in~\cite{conant2014number} for the coefficients of $t_n(v)$ does not seem applicable to asymptotic enumeration, since it involves subtractions as a consequence of inclusion-exclusion. \hfill$\triangle$
\end{remark}

\begin{remark}
The case of plane bipolar posets counted by vertices, this time with $(p,q)$ corresponding to the outer type rather than the pole-type, can be treated from the weight-assignment $w(i,j)=\mathbf{1}_{i\neq 0,j\neq 0}$ (used for plane bipolar posets counted by edges) and aggregating the steps into groups formed by a SE step followed by a (possibly empty) sequence of non-SE steps. In that case the series for one aggregated step is 
\[
S(z;x,y)=\frac{x/y\ \!z^{-2}}{1-\frac{z^2y/x}{(1-z/x)(1-zy)}},
\]
which coincides with the expression $S(z;x,y)=\frac{x}{y}z^{-2}+\frac{1}{1-z/x-zy}$ (obtained from Proposition~\ref{prop:weight}) used in the proof of Proposition~\ref{prop:asympt_posets_vertices}. The asymptotic constants $\gamma=\tfrac{11+5\sqrt{5}}{2},\alpha=6$ are thus the same as those in Proposition~\ref{prop:asympt_posets_vertices}. \hfill$\triangle$
\end{remark}

We conclude this section with some observations and consequences of properties of the functions $\gamma(v)$ and $\xi(v)$. 
Let $t_{n,k}$ be the number of transversal structures with $n+4$ vertices and $k$ inner quadrangular faces. 
 Let $k(n)=\mathrm{max}(k,\ t_{n,k}\neq 0)$. Based on the Euler relation, for any transversal structure with $n+4$ vertices and $k$ inner quadrangular faces, 
we have $k=n+1-\Delta/2$, with $\Delta$ the number of triangular inner faces. Hence $k(n)\leq n-1$ (there are at least $4$ triangular faces, one incident to each outer edge). 
On the other hand, it is easy to obtain $n-k(n)=O(\sqrt{n})$, as shown in Figure~\ref{fig:grille}, hence $k(n)\sim n$. 

\begin{corollary}\label{coro:entr}
For $u_n$ any integer sequence such that $u_n\leq k(n)$ and $u_n=n-o(n)$, let $s_n=\sum_{k=u_n}^{k(n)}t_{n,k}$. Then $\lim_{n\to\infty}s_n^{1/n}=1$.  
\end{corollary}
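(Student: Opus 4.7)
The plan is to sandwich $s_n^{1/n}$ between $1$ and $\gamma(v)/v$ for arbitrary $v\geq 1$, and then to exploit the asymptotic $\gamma(v)\sim v$ as $v\to\infty$ that falls out of the closed-form expression in Proposition~\ref{prop:asympt_trans_structures}. For the lower bound, $t_{n,k(n)}\geq 1$ by the very definition of $k(n)$; provided $u_n\leq k(n)$ eventually (which is the natural nontriviality assumption under which $s_n$ is not identically zero), the summation range contains the term $k=k(n)$, so $s_n\geq 1$ for $n$ large, and $\liminf_n s_n^{1/n}\geq 1$.

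For the upper bound, I would compare $s_n$ with the $v$-weighted count $t_n(v)$ for fixed $v\geq 1$. Since $t_n(v)=\sum_k t_{n,k}v^k$ and every index appearing in $s_n$ satisfies $k\geq u_n$, we have
\[
  t_n(v)\ \geq\ \sum_{k=u_n}^{k(n)} t_{n,k}\,v^k\ \geq\ v^{u_n}\,s_n,
\]
so $s_n\leq v^{-u_n}\,t_n(v)$. Taking $n$-th roots and using $t_n(v)^{1/n}\to\gamma(v)$ from Proposition~\ref{prop:asympt_trans_structures}, together with $u_n/n\to 1$ (which is precisely the content of $u_n=n-o(n)$), yields
\[
  \limsup_n s_n^{1/n}\ \leq\ \frac{\gamma(v)}{v}\qquad\text{for every } v\geq 1.
\]

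It remains to let $v\to\infty$ in this inequality. From the explicit formula $\gamma(v)=\tfrac{1}{2(2+v)}\bigl(2v^2+18v+27+(9+4v)^{3/2}\bigr)$, an elementary expansion gives $\gamma(v)=v+4\sqrt{v}+O(1)$, so that $\gamma(v)/v\to 1$, in agreement with the asymptote displayed in Figure~\ref{fig:plots}. Taking the infimum over $v$ thus delivers $\limsup_n s_n^{1/n}\leq 1$, and combining with the lower bound yields $\lim_n s_n^{1/n}=1$. The only potentially delicate point is the order of limits in $n$ and $v$, but since the bound $s_n\leq v^{-u_n}t_n(v)$ is valid for every fixed $v$ uniformly in $n$, one simply takes $\limsup_n$ first and $v\to\infty$ afterwards, so no uniformity issue arises; the proof is essentially a soft consequence of the weighted asymptotics already established.
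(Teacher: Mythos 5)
Your proof is correct and follows essentially the same route as the paper: bound $s_n\le v^{-u_n}t_n(v)$, use $t_n(v)^{1/n}\to\gamma(v)$ and $u_n/n\to 1$ to get $\limsup s_n^{1/n}\le\gamma(v)/v$, then let $v\to\infty$ using $\gamma(v)\sim v$, with $s_n\ge 1$ giving the matching lower bound. You also correctly spotted that the hypothesis should read $u_n\le k(n)$ (the printed $\ge$ is a typo, as the sum would otherwise be empty), and your expansion $\gamma(v)=v+4\sqrt{v}+O(1)$ is accurate.
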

\begin{proof}
Since $s_n\geq 1$, it suffices to show that $c:=\mathrm{lim\ sup}\ s_n^{1/n}$ satisfies $c\leq 1$. We have $t_n(v)\geq s_n v^{u_n}$. On the other hand,
 $u_n= n-o(n)$, hence $c\cdot v\leq \gamma(v)$. The expression of $\gamma(v)$ in Theorem~\ref{theo:asympt_trans_structures} ensures that $\gamma(v)\sim v$ as $v\to\infty$. Hence $c\leq 1$.  
\end{proof}

Corollary~\ref{coro:entr} agrees with the fact that in the regime where $k=n-o(n)$, there is a strong rigidity in transversal structures: they take the form of a regular grid (as in Figure~\ref{fig:grille}) with $O(n-k)=o(n)$ ``defaults" (triangular faces, or vertices of degree larger than $4$). 
Since the regime $k=n-o(n)$ has dominating asymptotic contribution to $t_n(v)$ as $v$ gets large, 
 $v$-weighted transversal structures can be proposed as a model that interpolates between a random lattice (case $v=\Theta(1)$) and a
regular lattice (case $v\to\infty$), for instance the (conjectural) local limit $L^{\infty}(v)$ is expected to converge to the regular lattice $\mathbb{Z}^2$ as $v\to\infty$. 
If we look at the subexponential term $n^{-\alpha(v)}$, since $\xi(v)$ is increasing to $1$ as $v$ increases from $0$ to $\infty$,  
$\alpha(v)=1+\pi/\mathrm{arccos}(\xi(v))$ is increasing to $+\infty$, and the corresponding central charge $c=c(v)$ (determined by $12\,(2-\alpha(v))=c-1-\sqrt{(1-c)(25-c)}$) is decreasing to $-\infty$.  
This is again consistent with the assumption that a random $v$-weighted transversal structure  should approach 
the behaviour of a regular lattice as $v$ gets large; e.g. the Hausdorff dimension is known~\cite{ding2020fractal} to increase to $2$ when $c$ decreases to $-\infty$.  

\begin{remark}
A different model of random planar lattices approaching a regular lattice (based on an auxiliary parameter) has been proposed in~\cite{kazakov1996almost,kazakov1996exact}, where the authors consider Eulerian quadrangulations
 weighted by $\beta^{n_2-4}$, with $n_2$ the number of vertices of degree $2$. When $\beta=0$, one gets a regular lattice behaviour (all vertices have degree $4$, except for four vertices of degree $2$). In~\cite{kazakov1996exact} it is demonstrated that, for any $\beta>0$, the asymptotic regime of the model is $n^{-5/2}$ (regime of ``pure gravity"). This contrasts with our model, where the polynomial exponent $\alpha$ in the asymptotic estimate varies continuously with the auxiliary parameter $v$ (in our model, when $\beta:=1/v$ tends to $0$, the exponent $\alpha$ diverges to $+\infty$, which as explained above is consistent with a regular lattice behaviour). \hfill$\triangle$    
\end{remark}

\section{Bijection between plane permutations and plane bipolar posets}\label{sec:bijec}
We give here a bijective proof of the fact, established in Proposition~\ref{prop:plane_perm}, that plane permutations of size $n$ are equinumerous to plane bipolar posets with $n+2$ vertices. 
The construction can be seen as the analog for plane bipolar posets of the bijection introduced in~\cite{bonichon2010baxter} between Baxter permutations and plane bipolar orientations. 

\subsection{Presentation of the construction and a first proof of bijectivity}
Recall that a permutation $\pi\in\frak{S}_n$ is called a plane permutation if it avoids the vincular pattern $2\underbracket[.5pt][1pt]{14}3$ (i.e., no pattern $2143$ has $1$ and $4$ adjacent).  
We adopt the standard diagrammatic representation of $\pi$, i.e., we identify $\pi$ to the set $\{(i,\pi(i)),\ i\in[1..n]\}$, whose elements are called the points of $\pi$. 
The dominance order on the points of $\pi$ is the poset where $p=(x,y)\leq p'=(x',y')$ iff $x\leq x'$ and $y\leq y'$. 
The dominance diagram of $\pi$ is obtained by drawing a segment $(p,p')$ for each pair of points such that $p'$ covers $p$ in the poset. 
It is known~\cite{bousquet2007forest} that a permutation is plane iff its dominance diagram is crossing-free.   
The completion $\opi$ of $\pi$ is the permutation on $[0,..n+1]$ where $\opi(0)=0$, $\opi(n+1)=n+1$ and $\opi(i)=\pi(i)$ for $i\in[1..n]$. Clearly, if $\pi$ is plane, then so is $\opi$. 
We denote by $\cP_n$ the set of plane permutations of size $n$, and by $\cB_n$ the set of plane bipolar posets on $n+2$ vertices.  
For $\pi\in\cP_n$, we let $\phi(\pi)$ be the dominance diagram of $\opi$, and let $\Phi(\pi)\in\cB_n$ be the underlying plane bipolar poset on $n+2$ vertices, see Figure~\ref{fig:bijection_dominance_drawing} for an example. 

\begin{figure}
\begin{center}
\includegraphics[width=14cm]{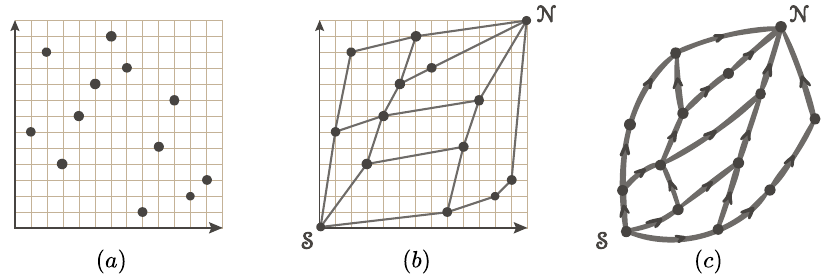}
\end{center}
\caption{(a) A plane permutation $\pi = (6, 11, 4, 7, 9, 12, 10, 1, 5, 8, 2, 3)$. (b) The embedded plane bipolar poset $\phi(\pi)$. (c) The underlying plane bipolar poset $\Phi(\pi)$.}
\label{fig:bijection_dominance_drawing}
\end{figure}

\begin{remark}
Note that the neighbors of the source $S$ of $\Phi(\pi)$ correspond to the left-to-right minima of $\pi$, the neighbors of the sink $N$ of $\Phi(\pi)$ correspond to the right-to-left maxima of $\pi$, the non-pole vertices on the right boundary of $\Phi(\pi)$ correspond to the right-to-left minima of $\pi$, and the non-pole vertices on the left boundary of $\Phi(\pi)$ correspond to the left-to-right maxima of $\pi$. \hfill$\triangle$
\end{remark}

We now describe the inverse construction $\Psi$ (illustrated in Figure~\ref{fig:def_bijection_psi}), 
\begin{figure}
	\begin{center}
	\includegraphics[width=12cm]{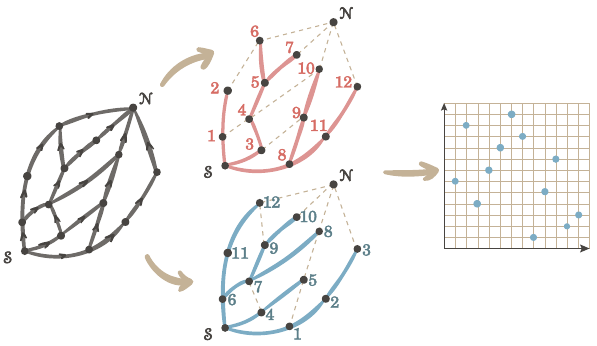}
	\end{center}
	\caption{The mapping~$\Psi$, from a plane bipolar poset to a plane permutation.}
	\label{fig:def_bijection_psi}
\end{figure}
 which is actually just the mapping formulation of a dominance drawing algorithm due to Di Battista, Tamassia and Tollis~\cite{di1992area}.   
Let $B$ be a plane bipolar poset, with $I$ its set of non-pole vertices, $S$ its source and $N$ its sink. 
The \emph{left tree} $T_\ell$ (resp. \emph{right tree} $T_r$) of $B$ is the spanning tree of $I\cup \{S\}$ where for every $v\in I$ its parent-edge is its leftmost (resp. rightmost) ingoing edge.  
For $v\in I$, we let $x(v)$ be the rank (in $[1..n]$) of $v$ for the order of first visit in clockwise order around $T_r$.  Similarly, we let $y(v)$ be the rank (in $[1..n]$) of first visit of $v$   during a counterclockwise tour around $T_\ell$. Let $\Psi(B)$ be the permutation whose point-diagram is $\{(x(v),y(v)),\ v\in I\}$, see Figure~\ref{fig:bijection_psi}.  
 Then the following property holds: 
 
\begin{figure}
\begin{center}
\includegraphics[width=10cm]{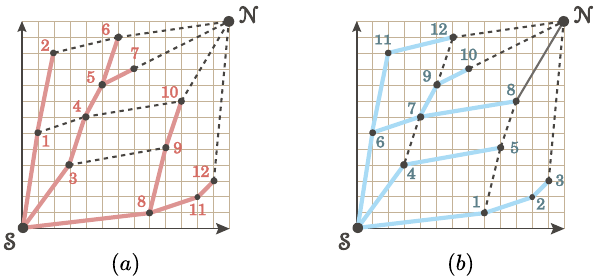}
\end{center}
\caption{The property $\Psi(\Phi(\pi))=\pi$ visualized on the embedded plane bipolar poset $\phi(\pi)$. (a) The non-pole vertices ordered by first visit in clockwise order around the right-tree $T_r$ (in red) are in order of increasing abscissas. 
(b) Likewise the non-pole vertices ordered by first visit in counterclockwise order around the left-tree $T_\ell$ (in blue) are in order of increasing ordinates.}
\label{fig:bijection_psi}
\end{figure}

\begin{claim}[\cite{di1992area}]\label{claim:PhiPsi}
For $B$ a plane bipolar poset, $\Psi(B)$ is a plane permutation and $\phi(\Psi(B))$ gives a planar drawing of $B$. 
 Hence,~$\Phi(\Psi(B))=B$. 
\end{claim}

\begin{remark}\label{rk:left-to-right}
As noted in~\cite{di1992area}, this result also follows from the study of planar lattices~\cite{kelly1975planar,kelly1982dimension}. Letting $\leq$ be the underlying partial order on the vertices of $B$, the planar embedding of $B$ yields a so-called \emph{left-to-right order} denoted $\lambda$: for $u,v\in V$ two distinct non-pole vertices, we 
let $u\ \lambda\ v$ if  $u$ is on the left of any path from the source to the sink and passing by $v$ (in which case $v$ is on the right of any path from the source to the sink and passing by $u$). The order $\lambda$ is a conjugate order for $\leq$, i.e., each pair of distinct elements of $V$  is comparable for exactly one of the two orders $\leq,\lambda$. 
 Letting $\lambda^d$ be the reverse order of $\lambda$, both 
  $\mathrm{Order}_x=\ \leq\ \cup\ \lambda$ and $\mathrm{Order}_y=\ \leq\ \cup\ \lambda^d$ are total orders on $V$, and their intersection is $\leq$, so that the poset represented by $B$  has Dushnik-Miller dimension at most $2$. 
  
  A simple argument then ensures that placing every vertex $v$ with abscissa (resp. ordinate) given by its rank in $\mathrm{Order}_x$ (resp. $\mathrm{Order}_y$), placing
  $S$ and $N$ at the lower-left and upper-right corner, and drawing edges as segments, yields a planar drawing of $B$. Moreover (see~\cite[Lemma~2]{di1992area}), this procedure coincides with the vertex placement given by $\Psi$,
  i.e.,  $\mathrm{Order}_x$ (resp. $\mathrm{Order}_y$) coincides with the order of first visit in clockwise order around $T_r$ (resp. in counterclockwise order around $T_\ell$). \hfill$\triangle$
\end{remark}

\begin{claim}\label{claim:PsiPhi}
For every $\pi\in \cP_n$, we have $\Psi(\Phi(\pi))=\pi$ (see Figure~\ref{fig:bijection_psi}). 
\end{claim}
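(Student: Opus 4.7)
The plan is to work entirely in the natural geometric embedding $\phi(\pi)$ of $B=\Phi(\pi)$, where each non-pole vertex sits at its actual point position $(i,\pi(i))$, and to show that the two tree traversals defining $\Psi(B)$ recover these point coordinates. Concretely I want the clockwise first-visit tour of $T_r$ starting at $S$ to visit the non-pole vertices in order of increasing $x$-coordinate, and symmetrically for the counterclockwise first-visit tour of $T_\ell$ and the $y$-coordinate; these two facts immediately give $x(v)=i$ and $y(v)=\pi(i)$ for the vertex $v$ at the point $(i,\pi(i))$, hence $\Psi(\Phi(\pi))=\pi$. By the $x\leftrightarrow y$ symmetry it suffices to handle the $T_r$ claim.

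First I would record two local geometric observations in the straight-line dominance drawing. A direct slope comparison at $v=(i,\pi(i))$ shows that the first ingoing edge clockwise from the right lateral corner is the one coming from the Hasse cover of $v$ with largest $x$-coordinate; hence the $T_r$-parent of $v$ is exactly its maximum-$x$ cover. Dually, any two $T_r$-children $c,c'$ of a common vertex $u$ are incomparable in the poset (else $u$ would not be the maximum-$x$ cover of the smaller one), and the angular comparison of the outgoing edges at $u$ shows that the clockwise order of the $T_r$-children of $u$ is the order of increasing $x$-coordinate; in particular, if $c$ precedes $c'$ clockwise then $x(c)<x(c')$ and $y(c)>y(c')$.

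The key technical step is a \emph{subtree separation} lemma: if $c$ precedes $c'$ clockwise among the $T_r$-children of $u$, then every descendant $v$ of $c$ in $T_r$ satisfies $x(v)<x(c')$. Assuming instead $x(v)\ge x(c')$, walk up the $T_r$-parent chain $v=w_0,w_1,\dots,w_m=c$; since $x$-coordinates strictly decrease from $x(v)\ge x(c')$ down to $x(c)<x(c')$, there is a first index $t$ with $x(w_t)\ge x(c')>x(w_{t+1})$. Split into two cases: if $c'\preceq v$, then $c\preceq w_t$ together with $y(c)>y(c')$ forces $y(w_t)>y(c')$, so combined with $x(w_t)\ge x(c')$ we get $c'\prec w_t$; but then the last step of a Hasse chain from $c'$ to $w_t$ is a cover of $w_t$ with $x$-coordinate $\ge x(c')$, contradicting $x(w_{t+1})<x(c')$. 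If instead $c'\not\preceq v$, then $x(c')\le x(v)$ forces $y(c')>y(v)\ge y(c)$, so with $x(c')>x(c)$ we obtain $c\prec c'$, contradicting the incomparability of $c$ and $c'$.

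With subtree separation an induction on subtree depth finishes the argument: the clockwise tour visits the root of each subtree first and then visits the subtrees of its $T_r$-children in clockwise order, which by the lemma is the order of strictly increasing and disjoint intervals of $x$-coordinates, while within each subtree the claim is inductive. Applying this at $S$ gives that the tour visits the non-pole vertices in increasing $x$-coordinate order and so $x(v)=i$, and the symmetric argument on $T_\ell$ gives $y(v)=\pi(i)$. The main obstacle is the subtree separation lemma itself, as it is the step that genuinely uses $\pi$ being plane: one must combine the purely local maximum-$x$-cover characterisation of $T_r$-parents with the incomparability of $T_r$-siblings to rule out any $x$-coordinate ``crossover'' between distinct subtrees.
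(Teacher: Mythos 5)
Your proof is correct, but it is organized quite differently from the paper's. The paper shows directly that $v_{i+1}$ is the vertex discovered immediately after $v_i$ in the clockwise tour of $T_r$, splitting on whether $i$ is an ascent or a descent of $\pi$; the descent case is handled by analyzing the rightmost path from $S$ to $v_i$, showing that $v_i$ is a leaf of $T_r$ and locating the $T_r$-parent of $v_{i+1}$ on that path (a byproduct being the correspondence between descents of $\pi$ and leaves of $T_r$, which the paper records in a subsequent remark). You instead prove a global structural invariant: after characterizing the $T_r$-parent of a vertex as its cover of maximum abscissa and observing that $T_r$-siblings are incomparable and appear clockwise in increasing abscissa, your subtree-separation lemma shows that the subtrees hanging off clockwise-consecutive children occupy abscissa ranges that do not interleave, and the conclusion follows by induction since each subtree's root has the minimum abscissa in its subtree. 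I checked your separation argument and it is sound; note only that your case split is redundant (the argument of your first case --- $c\preceq w_t$ gives $y(w_t)\geq y(c)>y(c')$, hence $c'\prec w_t$, hence a cover of $w_t$ of abscissa at least $x(c')$, contradicting the choice of $t$ --- never uses the hypothesis $c'\preceq v$ and so already disposes of both cases), and that the two preliminary geometric facts you assert by ``slope comparison'' (rightmost ingoing edge $=$ edge from the maximum-abscissa cover; clockwise sibling order $=$ increasing abscissa) depend on the orientation convention but agree with the paper's own usage in its ascent case and in Figure~\ref{fig:bijection_psi}. Your route gives a cleaner global picture of $T_r$; the paper's route is more local and additionally extracts the descent/leaf statistic.
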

\begin{proof}
Let $B=\Phi(\pi)$, and consider the drawing $\phi(\pi)$ of $B$. Given Remark~\ref{rk:left-to-right}, it is enough to show that for two permutation points $p,q$ ordered so that $p$ has smaller abscissa (resp. ordinate), the corresponding vertices $v_p,v_q$ of $B$ satisfy $v_p\leq v_q$ or $v_p\ \lambda\ v_q$ (resp. satisfy $v_p\leq v_q$ or $v_p\ \lambda^d\ v_q$). 
Let us check the property regarding the order on abscissas (the argument for ordinates is similar).   
If $p$ has smaller ordinate than $q$, then $v_p\leq v_q$. 
If $p$ has larger ordinate than $q$, then $p$ and $q$ are not comparable for $\leq$ (since it is the dominance order). In that case, $q$ lies in the south-east quadrant of $p$,
so that it is necessarily on the right of any path from source to sink passing by $p$ (such a path being a broken line of segments of positive slope), hence $v_p\ \lambda\ v_q$. 
\end{proof}


From Claim~\ref{claim:PhiPsi} and Claim~\ref{claim:PsiPhi} we obtain:
\begin{proposition}\label{prop:bij}
The mapping $\Phi$ is a bijection from $\cP_n$ to $\cB_n$. Its inverse is $\Psi$. 
\end{proposition}



\subsection{Generating trees for plane permutations and plane bipolar posets}

In this section, we give an alternative proof of Proposition~\ref{prop:bij} via isomorphic generating trees, similarly as the proof method in~\cite{bonichon2010baxter}.   
This also yields an alternative proof of the fact that the vertex placement by $\Psi$ yields a planar dominance drawing of any plane bipolar poset (and thus also 
an alternative proof of the fact that planar lattices have Dushnik-Miller dimension at most $2$).


We recall first the generating tree for plane permutations as introduced in~\cite{bouvel2018semi}.  
Let $n\geq 1$ and $\pi'\in \cP_{n+1}$. The \emph{parent} of $\pi'$ is the permutation $\pi$ of size $n$ obtained by deleting the rightmost element and renormalizing, i.e., it is the permutation $\pi$ such that $\pi(i)=\pi'(i)-\mathbf{1}_{\pi'(i)>\pi'(n+1)}$ for $i\in[1..n]$. Clearly, $\pi\in\cP_n$. 
Conversely, for $\pi\in \cP_{n}$ 
and $a\in[1..n+1]$, let $\pi\cdot a$ be the permutation $\pi'$ of size $n+1$ such that $\pi'(n+1)=a$, and  $\pi'(i)=\pi(i)+\mathbf{1}_{\pi(i)\geq a}$ for $i\in[1..n]$. Then $a$ is called \emph{active} for $\pi$ if $\pi\cdot a$ is in $\cP_{n+1}$, in which case the point $p$ of $\opi$ of ordinate $a-1$ is also called active and we use the notation $\pi_p$ for $\pi\cdot a$. A point $q$ in $\opi$ clearly corresponds to a point in 
$\overline{\pi_p}$, and by a slight abuse of notation this point is also referred to as $q$. The following characterizes the non-active points and will be used later on.
 
\begin{claim}\label{claim:charact_active}
For $\pi\in \cP_n$, the point $S=(0,0)$ is always active, and moreover a point $p\in\pi$ is active 
iff it does not occur as the left element of a pattern $213$ in $\pi$. 
\end{claim}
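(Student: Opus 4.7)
The plan is to reformulate the activity of $p$ in terms of patterns of $\pi$ by analyzing when the insertion $\pi\cdot a$ introduces a new vincular pattern $2\underbracket[.5pt][1pt]{14}3$. Since $\pi$ itself avoids $2\underbracket[.5pt][1pt]{14}3$, any such new occurrence in $\pi\cdot a$ must involve the freshly inserted value $a$ at position $n+1$. As $n+1$ is the rightmost position, this value can only play the role of the final ``$3$'' in the pattern, so the obstruction takes the form of a triple $i_1<i_2<i_3\le n$ with $i_3=i_2+1$ and $\pi'(i_2)<\pi'(i_1)<a<\pi'(i_3)$. Translating this through $a=\pi(i_p)+1$ and the shift $\pi'(i)=\pi(i)+\mathbf{1}_{\pi(i)\ge a}$, the obstruction becomes: there exist indices $i_1<i_2<i_2+1\le n$ with $\pi(i_2)<\pi(i_1)\le \pi(i_p)<\pi(i_2+1)$. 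With this normal form in hand, the case $S=(0,0)$ is immediate: here $a=1$, so no value of $\pi'$ can be strictly below $a$ and no obstructing triple can exist, so $S$ is active.

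For $p\in\pi$, I then prove the equivalence between the existence of such an obstruction and ``$p$ is the left element of a $213$ pattern of $\pi$.'' The easy direction is a sandwich argument: given witnesses $j<k$ with $i_p<j<k$ and $\pi(j)<\pi(i_p)<\pi(k)$, the sequence $\pi(j),\pi(j+1),\dots,\pi(k)$ starts below $\pi(i_p)$ and ends above it, so it must cross $\pi(i_p)$ between two consecutive indices $m<m+1$ with $\pi(m)<\pi(i_p)<\pi(m+1)$. Then $(i_1,i_2,i_3)=(i_p,m,m+1)$ satisfies the obstruction conditions.

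The converse direction calls for a case analysis on the position of $i_p$ among $i_1,i_2,i_3$. When $i_1=i_p$, the triple $(i_p,i_2,i_3)$ itself displays $p$ as the left element of a $213$; when $i_1>i_p$, the triple $(i_p,i_1,i_3)$ does, using $\pi(i_1)<\pi(i_p)$ (the strict inequality following from $i_1\neq i_p$) together with $\pi(i_3)>\pi(i_p)$; and when $i_1<i_p<i_2$, the triple $(i_p,i_2,i_3)$ does, using $\pi(i_2)<\pi(i_1)<\pi(i_p)<\pi(i_3)$. The remaining configurations $i_p\in\{i_2,i_3\}$ or $i_2<i_p<i_3$ are ruled out by the strict inequalities and the adjacency $i_3=i_2+1$. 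The one configuration that does not immediately produce a $213$ witness is $i_1<i_2<i_3<i_p$, and this is exactly where planarity of $\pi$ enters: the four indices $(i_1,i_2,i_3,i_p)$ together with $\pi(i_2)<\pi(i_1)<\pi(i_p)<\pi(i_3)$ realize a $2\underbracket[.5pt][1pt]{14}3$ pattern of $\pi$, contradicting $\pi\in\cP_n$.

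The main obstacle is not a deep insight but the careful bookkeeping of the case analysis together with the $\le$/$<$ shifts induced by the insertion map. The only genuinely creative step is the sandwich argument that upgrades a non-consecutive $213$ witness into the consecutive pair $(i_2,i_2+1)$ required by the vincular constraint; the converse is then essentially forced by the planarity of $\pi$.
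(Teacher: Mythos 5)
Your proof is correct; I checked the normal form of the obstruction (the $\le$ versus $<$ translation through the shift $\pi'(i)=\pi(i)+\mathbf{1}_{\pi(i)\ge a}$ is right), the sandwich argument producing the consecutive pair $(m,m+1)$ with $i_p<m$, and all branches of the case analysis, including the degenerate cases $i_p\in\{i_2,i_3\}$ and the crucial case $i_1<i_2<i_3<i_p$ where planarity of $\pi$ is invoked. The route is genuinely different from the paper's. The paper works in the inverse permutation: it uses the fact that the class of plane permutations is closed under inversion (clear from the planar-dominance-diagram characterization, though only implicit in the text), observes that in $\pi_p^{-1}$ the inserted point has maximal value and hence must be the ``$4$'' of any new vincular occurrence, forcing $p$ to be the adjacent ``$1$'', and reads off the residual three-point pattern. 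That argument is shorter but leans on the inverse symmetry and leaves the case bookkeeping to the reader. Your version stays entirely inside $\pi\cdot a$, isolates the obstruction as an explicit triple condition, and resolves every placement of $i_p$ by hand; it is longer but self-contained and makes visible exactly where the hypothesis $\pi\in\cP_n$ is needed (only to exclude the configuration $i_1<i_2<i_3<i_p$). Both are valid proofs of the claim.
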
 
\begin{proof}
The first statement follows from the fact that the value $a=1$ is always active. 
 Regarding the second statement, if $p$ occurs as the left element of a pattern $213$ in $\pi$, then this pattern becomes a pattern $2\underbracket[.5pt][1pt]{14}3$ in 
$\pi_p^{-1}$, so $\pi_p$ (as $\pi_p^{-1}$) is not a plane permutation. Conversely, if $p$ is not active, then $\pi_p$ is not plane, 
and neither is $\pi_p^{-1}$. Hence the addition of the new point $p'$ at the right end creates a pattern $2\underbracket[.5pt][1pt]{14}3$ in $\pi_p^{-1}$, and  $p$ is the $1$ in that pattern (i.e., the point with smallest abscissa among the $4$ points). In $\pi$ the pattern formed by the $3$ points of abscissa $\leq n$ is thus $231$ with $p$ as the left element.   
\end{proof}

Let $\pi\in P_n$. Note that the rightmost point $(n,\pi(n))$ of $\pi$ is always active. The active points of $\opi$ with ordinate weakly larger (resp. strictly smaller) than $\pi(n)$  
are called \emph{upper-active} (resp. \emph{lower-active}).     
We let $p_1,\ldots,p_{h+k}$ be the active points of $\opi$ ordered by decreasing ordinate, with $p_1,\ldots,p_h$ the upper-active ones (with $p_{h}$ the rightmost point of $\pi$),
and $p_{h+1},\ldots,p_{h+k}$ the lower-active ones (ending with $p_{h+k}=(0,0)$). 
Let $j\in[1..h+k]$ and let $\pi'=\pi_{p_j}$, which is to be the $j$th child of $\pi$ in the generating tree of plane permutations. In $\pi'$ let $p'$ be the new added point at the right end. Then as shown in~\cite{bouvel2018semi} (and a consequence of Claim~\ref{claim:charact_active}),  the upper-active points of $\overline{\pi'}$ are $p_1,\ldots,p_{j-1},p'$ (for any $j\in[1..h+k]$), while the lower-active points are $p_{h},\ldots,p_{h+k}$ if $j\leq h$, and $p_{j},\ldots,p_{h+k}$ if $j>h$. In other words~\cite[Prop.5]{bouvel2018semi}, upon assigning the label pair $(h,k)$ to a plane permutation, the generating tree of $\cP=\cup_n\cP_n$ is isomorphic to the tree generated by the succession rule
\begin{equation}\label{eq:succession_rule}
\Omega=\left\{\begin{array}{ll}
(1,1)\\
(h,k) \rightsquigarrow \hspace{-3mm}& (1,k+1), \dots ,(h-1,k+1), (h,k+1)\\
& (h+1,k), \dots , (h+k,1). \end{array}\right.
\end{equation}

We now describe a generating tree for plane bipolar posets counted by vertices. Let $n\geq 1$, and let $B'\in \cB_{n+1}$. We first define how the \emph{parent} $B$ of $B'$ (which has one vertex less) is obtained. 
Let $e'=(u',N)$ be the last edge on the right boundary of $B'$ (i.e., the rightmost ingoing edge of $N$); $e'$ is called the top-right edge of $B'$, and its origin is called the top-right vertex of $B'$.  
Let $B'/e'$ be the plane bipolar orientation obtained by contracting $e'$. Note that all the inner faces have the same type in $B'/e'$ as in $B'$, except if the face $f$ on the left of $e'$ is an inner face, in which case its type $(i,j)$ in $B'$ becomes $(i,j-1)$ in $B'/e'$. In other words $B'/e'$ is a plane bipolar poset if $f$ is the left outer face or is an inner face of type $(i,j)$ with $j\geq 2$; we let $B:=B'/e'$ in this  case (Figure~\ref{fig:parent}(i)). Note that $B$ has one edge less than $B'$. Otherwise $f$ is an inner face whose right boundary is reduced to a single edge $\epsilon$ in $B'/e'$, and we let $B$ be the plane bipolar poset $(B'/e')\backslash \epsilon$. These cases are shown in Figure~\ref{fig:parent} (ii) (when $\epsilon$ has an inner face on its right) and Figure~\ref{fig:parent}(iii) (when $\epsilon$ has the right outer face on its right). In these last two cases, $B$ has two edges less than $B'$. 

\begin{figure}
\begin{center}
\includegraphics[width=14cm]{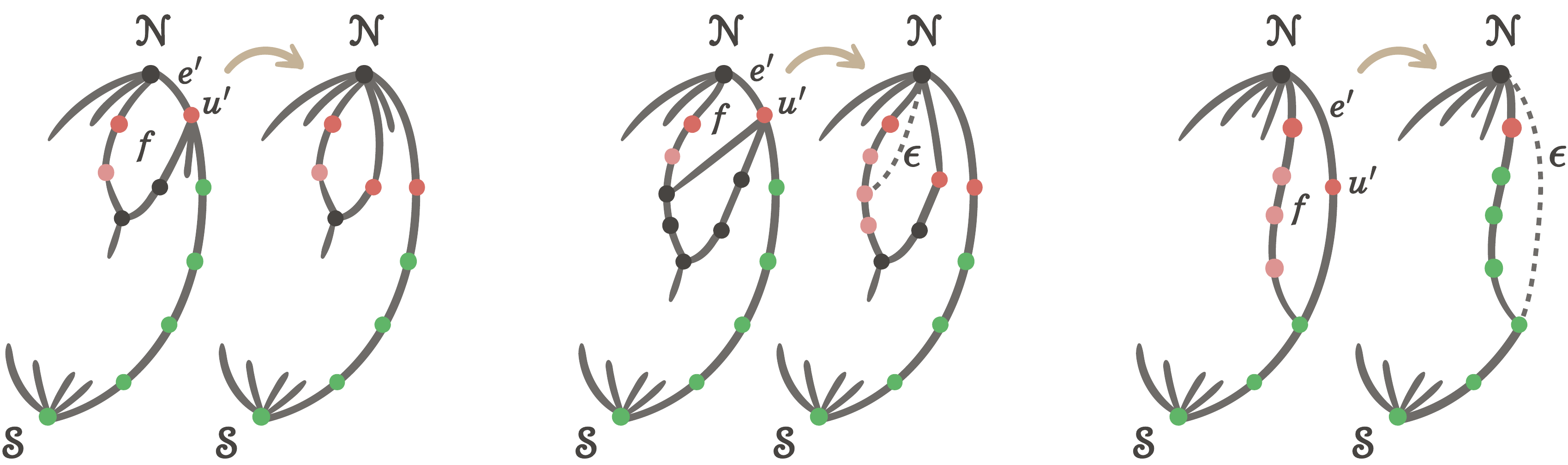}
\end{center}
\caption{The 3 possible cases for the parent of a plane bipolar poset. }
\label{fig:parent}
\end{figure} 
\begin{figure}
\begin{center}
\includegraphics[width=12cm]{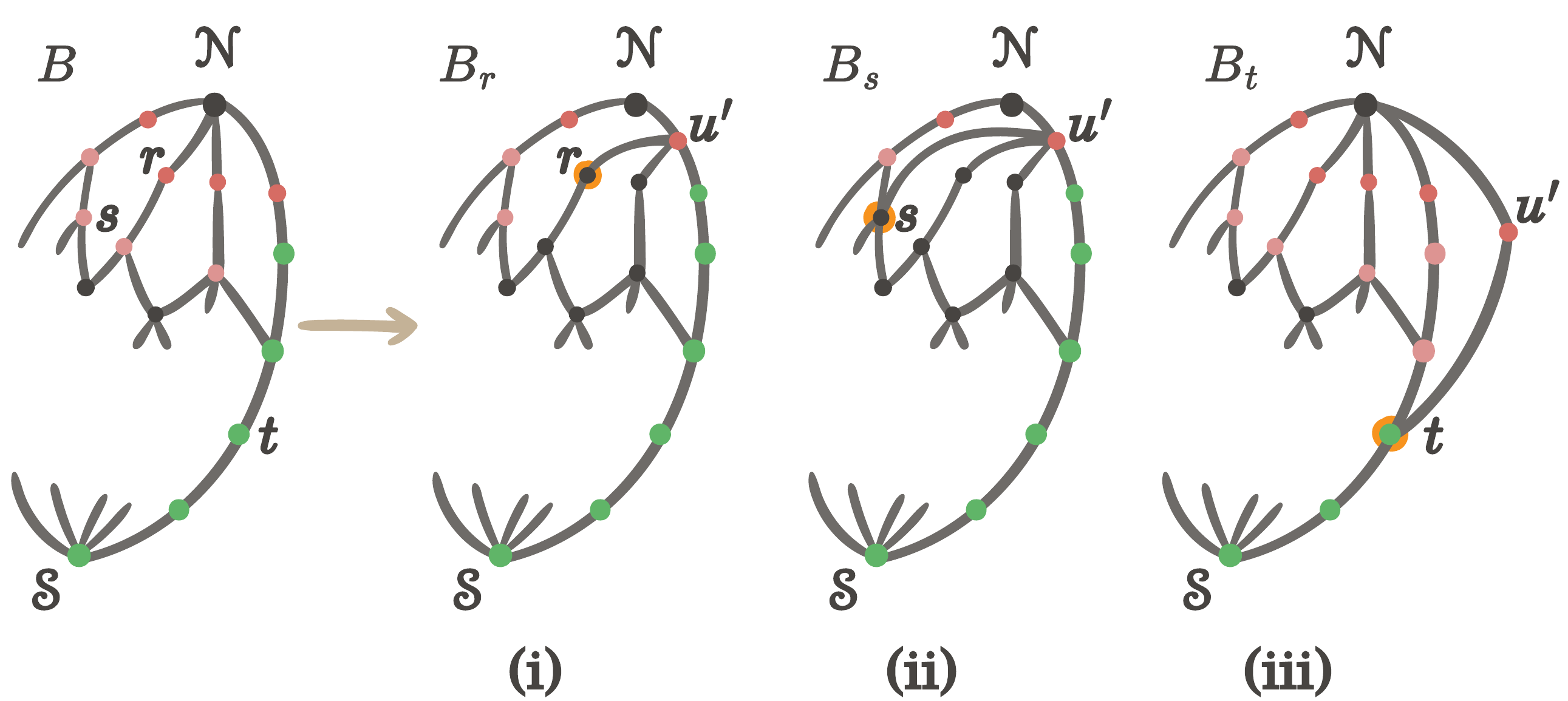}
\end{center}
\caption{The 3 types of children $B_v$ of a plane bipolar poset $B$: (i) when $v$ (here $r$) is a quasi-max, (ii) when $v$ (here $s$) is upper-active and not quasi-max, (iii) when $v$ (here $t$) is lower-active.}
\label{fig:children}
\end{figure}

Conversely, to describe the set of children of a given $B\in\cB_n$ we need a bit of terminology.  The neighbors $q_1,\ldots,q_s,q_{s+1}$ of $N$ ---ordered from left to right, with $q_{s+1}$ the top-right vertex--- are called the \emph{quasi-max} vertices of $B$, the \emph{upper faces} of $B$ are the inner faces $f_1,\ldots,f_s$ (ordered from left to right, possibly $s=0$ and this set is empty) incident to $N$, so that $f_i$ is the face on the right of the edge $(q_i,N)$ for $i\in[1..s]$.   
The \emph{upper-active vertices} of $B$ are those on the left boundary of the upper faces (excluding $N$ and the bottom vertex in each such face), plus the top-right vertex $u=q_{s+1}$. 
The \emph{lower-active} vertices of $B$ are those on the right boundary except $\{u,N\}$ (but including $S$).

As we describe now, in the generating tree there is one child $B_v$ for each active vertex $v$ of $B$ (i.e., for each vertex $v$ of $B$ that is either lower-active or upper-active).  
For $v$ a quasi-max vertex $q_i$ of $B$ (with $i\in[1..s+1]$), $B_v$ is obtained as follows (see Figure~\ref{fig:children}(i)): create a new vertex $u'$ (the new top-right vertex) whose unique out-edge points to $N$, and redirect the out-edge of each quasi-max $(q_i,\ldots,q_{s+1})$ to point to $u'$ so that, from left to right, the in-neighbors of $N$ are $q_1,\ldots,q_{i-1},u'$, and the in-neighbors of $u'$ are $q_{i},\ldots,q_{s+1}$.  
For $v$ an upper-active vertex of $B$ that is not a quasi-max, let $f_i$ ($i\in[1..s]$) be the inner face having $v$ on its left boundary. Then  $B_v$ is obtained as follows: create a new vertex $u'$ (the new top-right vertex) whose unique out-edge points to $N$, connect $v$ to $u'$ and redirect the out-edge of each quasi-max $(q_{i+1},\ldots,q_{s})$ to point to $u'$ so that, from left to right, the in-neighbors of $N$ are $q_1,\ldots,q_i,u'$, and the in-neighbors of $u'$ are $v,q_{i+1},\ldots,q_{s+1}$.  
Finally, for $v$ a lower-active vertex of $B$, $B_v$ is obtained from $B$ by adding a path of length $2$ from $v$ to $N$ on the right side. 
Note that $B_v$ has one more edge than $B$ when $v$ is a quasi-max, and two more edges otherwise. 
The following statement is readily checked case-by-case (looking at Figure~\ref{fig:parent} and Figure~\ref{fig:children}). 

\begin{claim}\label{claim:gen_tree_bipolar_poset}
For $B\in\cB_n$ and $v$ an active vertex of $B$, the parent of $B_v$ is $B$. Conversely, for $B'\in\cB_{n+1}$, let $B$ be the parent of $B'$
and let $v$ be the vertex of $B$ corresponding to the leftmost in-neighbor of the top-right vertex of $B'$. Then $v$ is active in $B$, and  $B'=B_v$. 
\end{claim}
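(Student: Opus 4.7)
The plan is to verify both parts of the claim by case analysis on the type of the active vertex $v$ (equivalently, on the shape of the face to the left of the top-right edge of $B'$), matching the three cases of Figure~\ref{fig:children} with the three cases of Figure~\ref{fig:parent}.

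For the forward direction, I fix $B\in \cB_n$ and an active vertex $v$, and identify the face $f$ on the left of the top-right edge $e'=(u',N)$ in $B_v$. If $v=q_i$ is a quasi-max with $i\geq 2$, then $f$ is the upper face $f_{i-1}$ of $B$ with its right boundary stretched by the new edge $(q_i,u')$, so its type becomes $(i_{f_{i-1}}, j_{f_{i-1}}+1)$ with $j$-entry $\geq 2$; contracting $(u',N)$ restores the original type of $f_{i-1}$, so we recover $B$ via Case~(i) of the parent operation. If instead $i=1$, then $f$ is the left outer face of $B_v$, still matching Case~(i). If $v$ is upper-active but not quasi-max, lying on the left boundary of some upper face $f_i$ strictly between its bottom and $q_i$, then in $B_v$ the face $f$ is enclosed by the top part of the left boundary of $f_i$ (from $v$ to $N$ through $q_i$, of length $\geq 2$) and the new length-$2$ right boundary $v\to u'\to N$; thus $f$ has type $(i_f,1)$ with $i_f\geq 1$, placing us in Case~(ii) of the parent operation, where contracting $(u',N)$ makes $\epsilon=(v,N)$ transitive, the face on the right of $\epsilon$ in $B_v/e'$ is the lower (still inner) portion of $f_i$, and removing $\epsilon$ merges the two halves back into $f_i$. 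Finally, if $v$ is lower-active, the added path $v\to w\to N$ creates a face $f$ of type $(i_f,1)$ with $i_f\geq 1$ (its left boundary being the old right boundary of $B$ from $v$ to $N$, of length $\geq 2$ since $v\neq u$); this time the face on the right of $\epsilon=(v,w)$ is the right outer face, matching Case~(iii) of the parent operation, and removing $\epsilon$ restores $B$.

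For the converse, given $B'\in \cB_{n+1}$, the face on the left of $(u',N)$ falls into exactly one of the configurations of Figure~\ref{fig:parent}, determining which case of the parent operation applies. In each case, the leftmost in-neighbor $v$ of $u'$ in $B'$ corresponds (by the analysis above) to a specific active vertex in the parent $B$: a quasi-max $q_i$ in Case~(i), an upper-active non-quasi-max vertex on the left boundary of some $f_i$ in Case~(ii), and a lower-active vertex in Case~(iii). The explicit inverse relationship established case-by-case then yields $B_v=B'$.

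The main obstacle is the bookkeeping of the planar embedding in each sub-case: one must verify that the creation of $u'$ (or $w$), the redirection of the affected out-edges of quasi-maxes, and the insertion of the new edges are consistent with a valid plane bipolar poset, and that the parent operation undoes these modifications precisely, with the ``degenerate face'' condition $j_f=1$ (appearing in children Cases~(ii) and~(iii)) exactly flagging the need for the additional edge-removal step $B'/e'\backslash\epsilon$ in parent Cases~(ii) and~(iii).
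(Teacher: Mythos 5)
Your case analysis is correct and is precisely the verification the paper leaves implicit: the paper's entire proof is the one-line remark that the claim is ``readily checked case-by-case'' from Figures~\ref{fig:parent} and~\ref{fig:children}, and you have carried out exactly that check, correctly matching each child case to a parent case via the type of the face to the left of the top-right edge (with the entry $j=1$ flagging the extra edge deletion). Only a cosmetic slip: in the lower-active case the edge $\epsilon$ is $(v,N)$ in $B_v/e'$, i.e.\ the image of $(v,w)$ after contracting the top-right edge.
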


Claim~\ref{claim:gen_tree_bipolar_poset} ensures that, for $B\in \cB_n$, the children of $B$ in the generating trees (i.e., the plane bipolar posets whose parent is $B$) are exactly the posets $B_v\in\cB_{n+1}$ with $v$ running over all active vertices of $B$.
We let $h$ (resp. $k$) be the number of upper-active (resp. lower-active) vertices of $B$, and we list the active vertices as the concatenation of $L(f_1),\ldots,L(f_s),L_{\mathrm{lower}}$, where $L(f_i)$ is the downward list of vertices on the left boundary of $f_i$ (excluding the top and bottom vertex of $f_i$), and $L_{\mathrm{lower}}$ is the list of lower-active vertices ordered downward along the right outer boundary of $B$. Let $v_1,\ldots,v_{h+k}$ be the obtained list, called the \emph{downward ordering} of active vertices (see the right-part of Figure~\ref{fig:active_vert} for an example). 
Note that $v_1,\ldots,v_h$ are the upper-active vertices and $v_{h+1},\ldots,v_{h+k}$ are the lower-active ones.   

Then one easily checks (looking at Figure~\ref{fig:children}) that the upper-active vertices of $B_{v_j}$ ---with $u'$ the new created top-right vertex--- are $v_1,\ldots,v_{j-1},u'$  (for any $j\in[1..h+k]$), while the lower-active vertices are $v_{h},\ldots,v_{h+k}$ if $j\leq h$, or are $v_{j},\ldots,v_{h+k}$ if $j>h$. Hence, upon assigning the label pair $(h,k)$ to every plane bipolar poset, the generating tree of $\cB=\cup_n\cB_n$ is isomorphic to the tree generated by the succession rule~\eqref{eq:succession_rule}. To summarize we obtain:

\begin{proposition}
The class $\cB=\cup_n\cB_n$ of plane bipolar posets is in bijection with the class $\cP=\cup_n\cP_n$ of plane permutations, via the common 
succession rule~\eqref{eq:succession_rule}. The induced (recursively specified) bijection between $\cP_n$ and $\cB_n$ is called the \emph{canonical bijection}. 
\end{proposition}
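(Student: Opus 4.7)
The plan is to verify that the generating tree on $\cB$ described above, with the labeling $(h,k)$ assigned to each plane bipolar poset, is isomorphic to the generating tree on $\cP$ encoded by~\eqref{eq:succession_rule}. Granting this, the bijection between $\cP_n$ and $\cB_n$ is read off level by level.

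First I would verify Claim~\ref{claim:gen_tree_bipolar_poset}, that the child construction $B\mapsto B_v$ and the parent construction $B'\mapsto B$ are inverse. This is a case-by-case check on the type of the active vertex $v$: if $v$ is a quasi-max, contracting the new top-right edge $(u',N)$ in $B_v$ falls into case (i) of Figure~\ref{fig:parent}; if $v$ is upper-active but not quasi-max, the contraction leaves behind a bigon enclosing an inner face whose removal recovers $B$, matching case (ii); if $v$ is lower-active, the contraction leaves behind an outer bigon whose removal recovers $B$, matching case (iii). Conversely, starting from $B'\in\cB_{n+1}$ with parent $B$, the leftmost in-neighbor of the top-right vertex of $B'$ is the unique active vertex $v$ of $B$ reproducing $B'$ through the children construction.

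Second, I would verify that for $B\in\cB_n$ with label $(h,k)$ and active vertices $v_1,\ldots,v_{h+k}$ in downward order, the $(h,k)$-labels of the children $B_{v_1},\ldots,B_{v_{h+k}}$ list exactly the right-hand side of~\eqref{eq:succession_rule}. By inspecting Figure~\ref{fig:children}, the upper-active vertices of $B_{v_j}$ are $v_1,\ldots,v_{j-1},u'$ for every $j\in[1..h+k]$, so $h'=j$. For $j\leq h$, the insertion of $u'$ and the new edges does not alter the portion of the right outer boundary below $v_h$, so the lower-active list of $B_{v_j}$ is $v_h,v_{h+1},\ldots,v_{h+k}$, giving $k'=k+1$; for $j>h$, the creation of the length-two path from $v_j$ to $N$ removes $v_{h+1},\ldots,v_{j-1}$ from the lower-active vertices and leaves $v_j,\ldots,v_{h+k}$, giving $k'=h+k-j+1$. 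These are precisely the labels $(1,k+1),\ldots,(h,k+1),(h+1,k),\ldots,(h+k,1)$ of~\eqref{eq:succession_rule}.

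Finally, the unique element of $\cB_1$ (the path $S\to v\to N$) has upper-active set $\{v\}$ and lower-active set $\{S\}$, hence label $(1,1)$, matching the root of the plane permutation generating tree. The main obstacle is the second step: one has to track carefully what happens to the \emph{downward ordering} of the active vertices after inserting the new top-right vertex $u'$, and check that the ordering induced on the surviving active vertices agrees with their position in $B$. Once this is confirmed, the two generating trees coincide as labeled trees, which yields a recursively defined bijection $\cP_n\to\cB_n$.
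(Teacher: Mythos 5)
Your proposal is correct and follows essentially the same route as the paper: the statement is obtained there as a summary of Claim~\ref{claim:gen_tree_bipolar_poset} together with the verification that the children $B_{v_1},\ldots,B_{v_{h+k}}$ carry the labels $(1,k+1),\ldots,(h,k+1),(h+1,k),\ldots,(h+k,1)$, plus the $(1,1)$ root check. Your label computations ($h'=j$ in all cases, $k'=k+1$ for $j\leq h$ since the old top-right vertex $v_h$ becomes lower-active, and $k'=h+k-j+1$ for $j>h$) match the paper's argument exactly.
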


Our goal is now to show that the mapping $\Phi$ coincides with the canonical bijection. In order to do so, 
we first show that for $\pi\in\cP_n$ the 
active points of $\opi$ match the active vertices of $\Phi(\pi)$ (see Figure~\ref{fig:active_vert} for an example):

\begin{figure}
\begin{center}
\includegraphics[width=10cm]{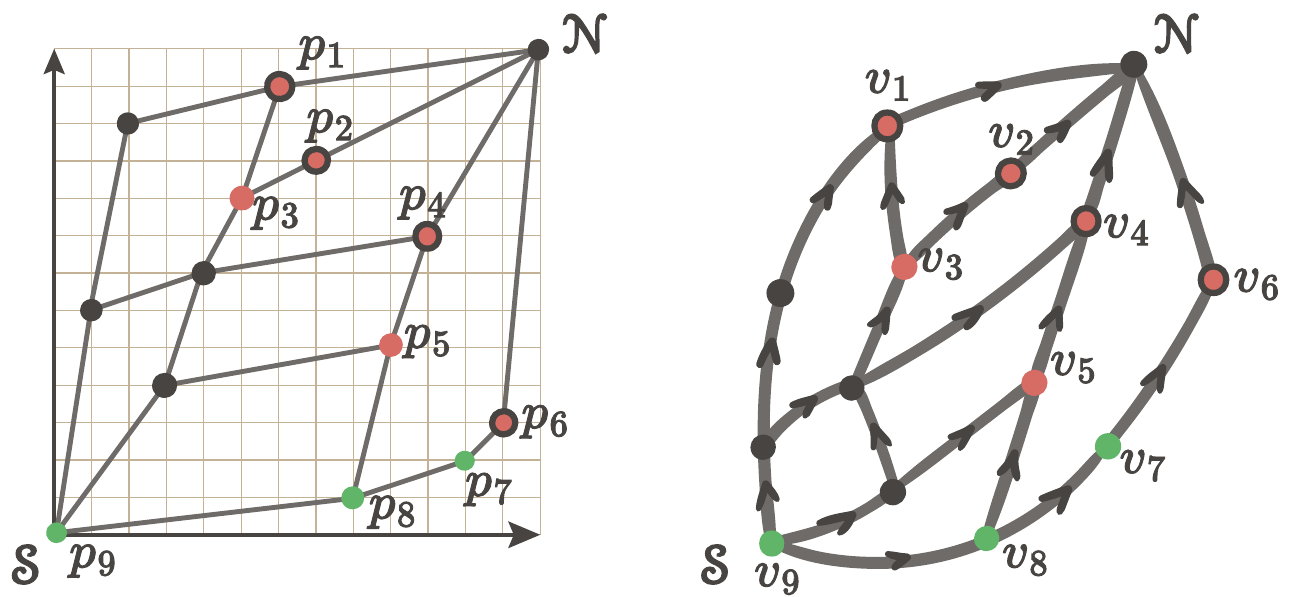}
\end{center}
\caption{For $\pi\in\cP_n$, the active points $p_1,\ldots,p_{h+k}$ of $\opi$ listed downward (i.e., by decreasing ordinate)  match the active vertices $v_1,\ldots,v_{h+k}$ of $\Phi(\pi)$ listed downward. (Upper-active points/vertices are shown red, while lower-active points/vertices are shown green.)}
\label{fig:active_vert}
\end{figure}

\begin{claim}\label{claim:active}
Let $\pi\in\cP_n$. Then the active vertices of $\phi(\pi)$ coincide with the active points of $\opi$. Moreover the downward ordering 
of the active vertices coincides with the downward ordering (ordering by decreasing ordinate) of the active points. 
\end{claim}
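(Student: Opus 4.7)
The plan is to identify the active vertices of $\phi(\pi)$ and the active points of $\opi$ explicitly, match them case by case, and then deduce the downward-ordering claim from the matching. Active vertices of $\phi(\pi)$ split naturally as: the lower-active ones are $S$ together with the right-to-left minima of $\pi$ distinct from $u=(n,\pi(n))$; the upper-active ones are $u$, the quasi-max vertices $q_1,\ldots,q_s$ (each sitting at the top of the left boundary of $f_i$ just below $N$), and the strictly interior vertices of the left-boundary paths of the upper faces. The active points of $\opi$, by Claim~\ref{claim:charact_active}, are $(0,0)$ together with those $p\in\pi$ for which, among the points of $\pi$ strictly to the right of $p$, all of ordinate $>y(p)$ precede all of ordinate $<y(p)$ in abscissa order.

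The easy cases handle themselves: $(0,0)=S$ is active on both sides; $u$ is trivially active (no point to its right in $\pi$) and is upper-active by definition; a right-to-left minimum $p\neq u$ has only above-$p$ points to its right, hence is active, and is lower-active as a non-$u$ vertex of the right boundary; a right-to-left maximum $p\neq u$ equals some $q_i$ with $i\in[1..s]$, has only below-$p$ points to its right, hence is active, and is upper-active as the top of the left boundary of $f_i$.

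The core case concerns $p\in\pi$ with both above- and below-$p$ points to the right. I would prove the equivalence: $p$ is active iff $p$ is a strict interior vertex of the left boundary of some upper face $f_i$. For $(\Rightarrow)$, let $q^{*}$ be the rightmost above-$p$ point; activeness forces all points of $\pi$ with $x(p)<x(v)\leq x(q^{*})$ to be above $p$, and all points with $x(v)>x(q^{*})$ to be below $p$, so that $q^{*}$ is a right-to-left maximum of $\pi$, and in fact $q^{*}=q_i$ for some $i\in[1..s]$ (else $q^{*}=u$ and no below-$p$ point could exist to its right). I would then show that the walk obtained by iterating ``rightmost cover in $\phi(\pi)$'' starting from $p$ stays in the strip $\{x\leq x(q_i)\}$ and terminates at $q_i$: at each intermediate vertex $w$ of the walk, any cover $c$ with $x(c)>x(q_i)$ would have $y(c)>y(w)\geq y(p)$, making $c$ an above-$p$ point right of $q_i$ and contradicting activeness. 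This cover chain is precisely the left boundary of $f_i$ from $p$ up to $q_i$. For $(\Leftarrow)$, if $p$ lies strictly inside the left boundary of $f_i$, the rightmost cover walk from $p$ reaches $q_i$ and $q_i$ is the rightmost above-$p$ point; if $p$ were not active, there would exist a below-$p$ point $b$ with $x(p)<x(b)<x(q_i)$. I would rule this out by a planarity argument: such $b$ would force a crossing in the dominance embedding between the Hasse edge on the left boundary of $f_i$ out of $p$ and a Hasse edge along a dominance path from $b$ to $q_i$ (equivalently, $b$ would lie geometrically inside the face region of $f_i$, contradicting the absence of interior vertices in a face), each contradicting the crossing-free property of the dominance diagram of a plane permutation.

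The main obstacle is this last planarity step, in particular the subcase $y(b)<y(q_{i+1})$ where $b$ could conceivably lie on the right boundary of $f_i$; here one must check that the incomparability of $b$ and $p$ forces $b$ into the face interior rather than onto the right-boundary chain. Once the set equality is established, the ordering claim is immediate: inside each $L(f_i)$ the ordinates strictly decrease along the Hasse chain from $q_i$ downward; the transition from $L(f_i)$ to $L(f_{i+1})$ uses that every intermediate left-boundary vertex $v$ of $f_i$ satisfies $y(v)>y(q_{i+1})$ (since $v\not\preceq q_{i+1}$); and the transitions through $u$ and into $L_{\mathrm{lower}}$ are clear. Since all active points of $\opi$ have pairwise distinct ordinates, both listings sort the same set by decreasing $y$, so they agree.
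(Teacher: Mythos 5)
Your set-level identification of the active vertices and active points is correct, and your reduction of the problem to the case of a point $p$ with both above-$p$ and below-$p$ points to its right is a reasonable reorganization; the ordering argument at the end also matches the paper's. However, both directions of your core equivalence have a genuine gap at the same place, namely where planarity must enter. In $(\Rightarrow)$, the sentence ``this cover chain is precisely the left boundary of $f_i$'' does not follow from the strip confinement. For $p$ to be upper-active you need the rightmost-cover walk $p=w_0\to w_1\to\cdots\to q_i\to N$ to have no ingoing Hasse edge attached on its \emph{right} side, i.e.\ each $w_{j-1}$ must be the largest-abscissa point covered by $w_j$. Your coordinate bounds do not exclude a point $p'$ covered by some $w_j$ with $x(w_{j-1})<x(p')<x(w_j)$ and $y(p)<y(p')<y(w_{j-1})$; such a $p'$ satisfies $p\prec p'$, lies inside your strip, is above $p$, and is perfectly compatible with the activeness of $p$ as a point --- yet its presence would make the face to the right of the chain top out at $w_j\neq N$, so $p$ would not be upper-active. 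Excluding it requires the planarity fact that every point dominating $p$ lies weakly to the left of the rightmost path from $p$, which you neither state nor prove. The same unproved fact underlies your unexamined assertion in $(\Leftarrow)$ that $q_i$ is the rightmost above-$p$ point, and the subcase you yourself flag as ``the main obstacle'' (a below-$p$ point $b$ that is not interior to $f_i$) is exactly the hard part and is left unresolved: $b$ need not lie on the right boundary of $f_i$ at all, it can sit in a face far below it, so neither of your two suggested contradictions applies as stated.

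The paper closes these holes with two devices you are missing. First, it works throughout with the single characterization ``$v$ is active iff the rightmost path from $v$ has no ingoing edge on its right side before reaching $N$,'' rather than with the explicit face-by-face description. Second, and crucially, for the direction ``non-active point $\Rightarrow$ non-active vertex'' it upgrades the $213$ occurrence to a $2\underbracket[.5pt][1pt]{13}$ occurrence, so that the ``$1$'' and the ``$3$'' occupy consecutive positions and therefore form a single Hasse \emph{edge} $e=(p',p'')$; since $p''$ dominates $p$ but $p'$ does not, planarity forces $e$ to meet the rightmost path from $p$ precisely at $p''$ and from the right, which is exactly the forbidden configuration. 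For the converse it uses the trichotomy on a hypothetical right-side ingoing edge $(p',p'')$: $p\prec p'$ is excluded by rightmost-path planarity, $p'\prec p$ is excluded because Hasse edges are non-transitive, so $p,p'$ are incomparable and yield a $213$ with $p$ on the left. You would need to import both of these arguments (or prove the rightmost-path separation lemma) to make your plan complete.
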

\begin{proof}
Consider a non-active point $p\in\pi$. By Claim~\ref{claim:charact_active} it occurs as the left element of a pattern $213$ in $\pi$, and it is easy to see that it must also occur as the left element of a pattern $2\underbracket[.5pt][1pt]{13}$ in $\pi$. Let $p',p''$ be the points corresponding to $1,3$ in the pattern, which form a rise in $\pi$, and thus form an edge $e$ in $\phi(p)$. Moreover, since $p$ is dominated by $p''$ but not by $p'$, 
the rightmost path $\gamma$ from $p$ (i.e., the path taking the rightmost outgoing edge at every step) has to hit $p''$, and it has to reach $p''$  with $e$ on the right side. Note that if the vertex $v$ corresponding to $p$ was active, then the rightmost path from $v$ would not have any ingoing edge on its right side before reaching $N$. Hence $v$ can not be active.

Conversely, let $p$ be an active point of $\pi$, and in $\phi(\pi)$ let $\gamma$ be the rightmost path from $p$. Assume that after leaving $p$ but before reaching $N$ it passes by a point $p''\neq N$ (hence $p''\in\pi$) with an ingoing edge $e=(p',p'')$ on the right side. Since we consider the rightmost path from $p$, the point $p'$ can not dominate $p$. Since $\phi(\pi)$ is an embedded poset (no transitive edge), the point $p'$ can not be dominated by $p$ either. Hence 
the points $p,p',p''$ form a pattern $213$, with $p$ or $p'$ as the left element and with $p''$ as the right element. Geometrically one checks that if the left element was $p'$ then $e$ would have to reach $\gamma$ from the left, a contradiction. Hence the left element is $p$, contradicting the fact that $p$ is active. Hence $\gamma$ has no ingoing edge on its right side before reaching $N$, and this ensures that the corresponding vertex is active in the bipolar poset (the rightmost path follows the left side of a face incident to $N$). 

It remains to check that the downward ordering of active vertices corresponds to the ordering by decreasing ordinates. 
The quasi-max $q_1,\ldots,q_{s+1}$ of the poset, ordered from left to right, correspond to the right-to-left max of $\pi$, ordered by increasing abscissa and decreasing ordinate (see Figure~\ref{fig:active_vert}, where these are surrounded). Letting $f_1,\ldots,f_s$ be the upper faces of the bipolar poset, ordered from left to right, for $i\in [1..s]$ a vertex on the left boundary of $f_i$ is dominated by $q_i$ but not by $q_{i+1}$, hence all these vertices have ordinate between those of $q_{i+1}$ and $q_i$. And the lower-active vertices are dominated by $q_{s+1}$, hence have ordinate smaller than the ordinate of $q_{s+1}$. These observations ensure that the two orderings coincide. 
\end{proof}

\begin{figure}
\begin{center}
\includegraphics[width=10cm]{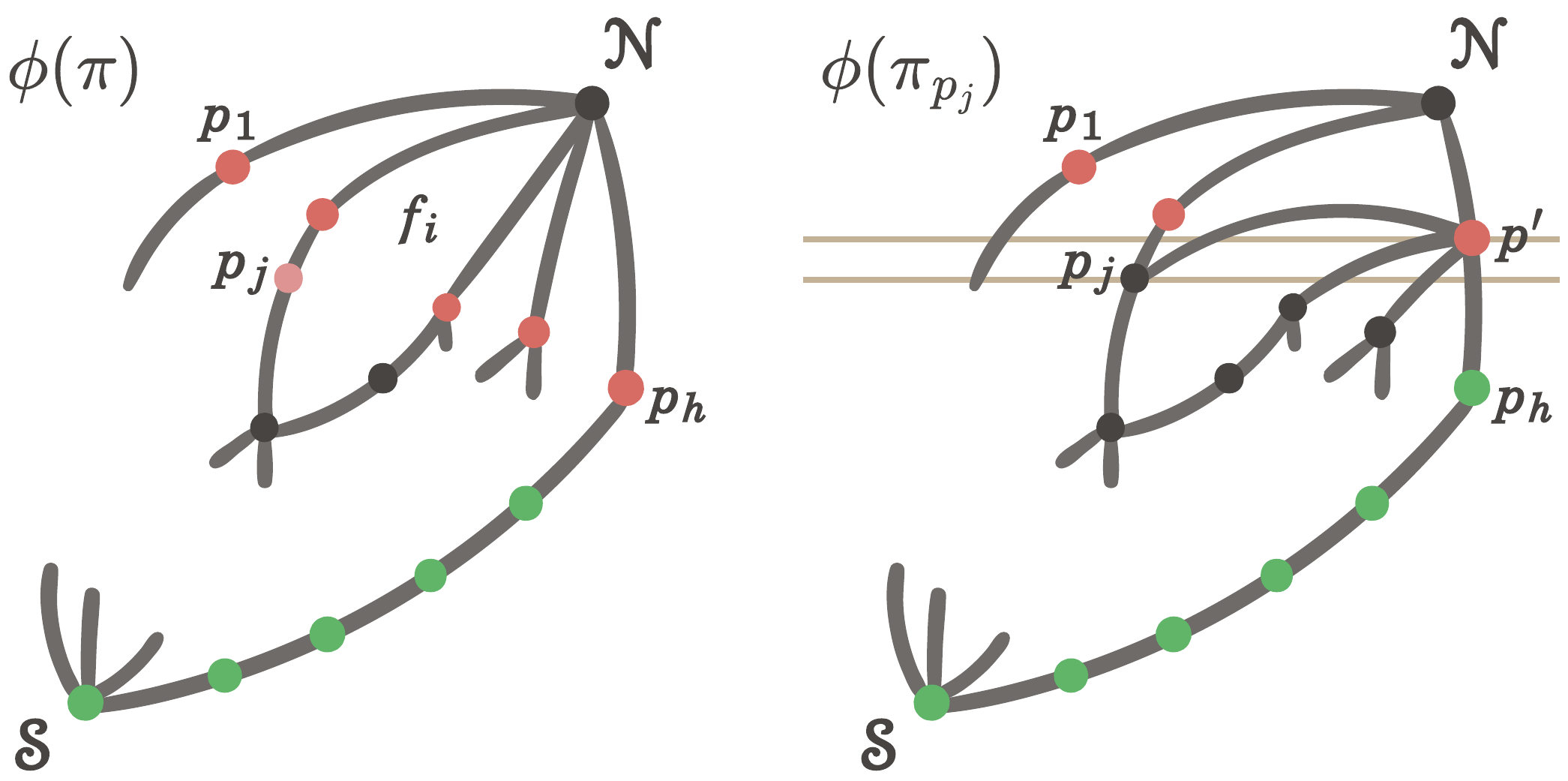}
\end{center}
\caption{The situation in the proof of Theorem~\ref{theo:Phi_canonical}.}
\label{fig:Phi_canonical}
\end{figure}

\begin{theorem}\label{theo:Phi_canonical}
The mapping $\Phi$ realizes the canonical bijection between $\cP=\cup_n\cP_n$ and $\cB=\cup_n\cB_n$.
\end{theorem}
\begin{proof}
We assume that the statement holds at size $n$ and want to prove it at size $n+1$. 
Let $\pi\in\cB_n$ having $h$ upper-active points and $k$ lower-active points, and let $p_1,\ldots,p_{h+k}$ be the active vertices of $\opi$ listed 
downward. By the inductive assumption, $B:=\Phi(\pi)$ is the image of $\pi$ by the canonical bijection (i.e., $\pi$ and $B$ are at the same place in the respective generating trees). 
By Claim~\ref{claim:active}, $p_1,\ldots,p_{h+k}$ correspond to the active vertices $v_1,\ldots,v_{h+k}$ of $B$, listed  downward. It remains to check that, for $j\in[1..h+k]$ we have $\Phi(\pi_{p_j})=B_{v_j}$. 
We treat here the case where $v_j$ is upper active but not quasi-max (the other cases can be treated similarly). Let $q_1,\ldots,q_{s+1}$ be the quasi-max vertices of $B$ ordered from left to right (corresponding to the right-to-left maxima of $\pi$ ordered downward), and let $f_1,\ldots,f_s$ be the upper faces of $B$ (ordered from left to right). 
Let $f_i$ be the one having $v_j$ on its left boundary. Let $p'$ be the added point in $\pi_{p_j}$ 
(just above $p_j$, at the right end, see Figure~\ref{fig:Phi_canonical}). To have $B_{v_j}=\Phi(\pi_{p_j})$ we just have to check that the in-neighbours of $p'$ in $\phi(\pi_{p_j})$ are $p_j,q_{i+1},\ldots,q_{s+1}$. Denoting by $y'$ the ordinate of $p_j$, this amounts to check that $p_j,q_{i+1},\ldots,q_{s+1}$ are the right-to-left maxima of $\pi|_{y\leq y'}$, which easily follows from Claim~\ref{claim:active} and the fact that $q_{i+1},\ldots,q_{s+1}$ are already right-to-left maxima of $\pi$, see Figure~\ref{fig:Phi_canonical}.   
\end{proof}

\section{Conclusion and open questions}
We have obtained exact and asymptotic enumeration results for two types of oriented planar maps: plane bipolar posets (counted by edges or by vertices), 
and transversal structures, where we can control the number of quadrangular faces (degenerate vertices in the dual rectangular tiling). 
We conclude with a list of related open questions:
\begin{itemize}
\item
Is there a direct bijection between plane bipolar posets with $n$ edges and quadrant excursions of length $n-1$ with step-set $\{0,E,S,NW,SE\}$? (These sets are 
equinumerous, by Proposition~\ref{prop:small_step_plane_posets_edges}.)
\item
Some ``Baxter-like" summation formulas are known~\cite{bouvel2018semi} for the number of plane permutations of size $n$. Is there a bijective encoding (e.g. by a system of non-intersecting lattice walks) of plane permutations or size $n$, or of plane bipolar posets with $n+2$ vertices, that yields such a formula?
\item
Is there a characterization of 4-outer maps that admit a transversal structure? As we have seen in Lemma~\ref{res:T:acyclic_poset}, 
a necessary condition is that inner face degrees are in $\{3,4\}$, and it is also quite easy to show the necessity of being 4-connected 
upon adding a vertex $v_{\infty}$ connected to the 4 outer vertices. 
We recall that a precise characterization is known~\cite{he1993finding} 
when all inner faces have degree $3$ (the condition of existence is that any triangle bounds a face).
\item
Can the recurrence for triangulated transversal structures given in~\cite{inoue2009counting} be extended to have a weight $v$ per quadrangular inner face, 
and yield another derivation of the asymptotic estimate in Theorem~\ref{theo:asympt_trans_structures}? 
\item
For fixed $v>0$, is there a local limit (as $n\to\infty$) for random transversal structures with $n+4$ vertices and weight $v$ per quadrangular inner face (where a 
random vertex is chosen as the center of the considered neighbourhoods)?
\end{itemize}


\section*{Acknowledgements} The authors thank the two anonymous referees for helpful comments to improve the presentation.   
\'E.F. and G.S. are partially supported by the project ANR-16-CE40-0009-01 (GATO) and the projet ANR-20-CE48-0018 (3DMaps), \'E.F. is also partially supported by the project ANR19-CE48-011-01 (COMBIN\'E).  

\bibliographystyle{plain}
\bibliography{ec}

\begin{thebibliography}{10}

\bibitem{BERNARDI200955}
Olivier Bernardi and Nicolas Bonichon.
\newblock Intervals in {C}atalan lattices and realizers of triangulations.
\newblock {\em Journal of Combinatorial Theory, Series A}, 116(1):55--75, 2009.

\bibitem{bona15}
Mikl{\'o}s {B{\'o}na (editor)}.
\newblock {\em Handbook of enumerative combinatorics}, volume~87.
\newblock CRC Press, 2015.

\bibitem{bonichon2010baxter}
Nicolas Bonichon, Mireille Bousquet-M{\'e}lou, and {\'E}ric Fusy.
\newblock Baxter permutations and plane bipolar orientations.
\newblock {\em S{\'e}minaire Lotharingien de Combinatoire}, 61:B61Ah, 2010.

\bibitem{bonichon2007convex}
Nicolas Bonichon, Stefan Felsner, and Mohamed Mosbah.
\newblock Convex drawings of 3-connected plane graphs.
\newblock {\em Algorithmica}, 47(4):399--420, 2007.

\bibitem{borga2021permuton}
Jacopo Borga.
\newblock The permuton limit of strong-{B}axter and semi-{B}axter permutations
  is the skew {B}rownian permuton.
\newblock {\em Electronic Journal of Probability}, 27:1--53, 2022.

\bibitem{borga2021skew}
Jacopo Borga.
\newblock The skew {B}rownian permuton: a new universality class for random
  constrained permutations.
\newblock {\em Proceedings of the London Mathematical Society},
  126(6):1842--1883, 2023.

\bibitem{borga2020scaling}
Jacopo Borga and Micka\"el Maazoun.
\newblock Scaling and local limits of {B}axter permutations and bipolar
  orientations through coalescent-walk processes.
\newblock {\em Annals of Probability}, 50:1359--1417, 2022.

\bibitem{bostan2012}
Alin Bostan, Kilian Raschel, and Bruno Salvy.
\newblock {Non-D-finite excursions in the quarter plane}.
\newblock {\em {Journal of Combinatorial Theory, Series A}}, 121:45--63,
  January 2014.

\bibitem{Bous11}
Mireille Bousquet-M\'elou.
\newblock Counting planar maps, coloured or uncoloured.
\newblock Invited survey for the 23rd Bristish Combinatorial Conference,
  Exeter, July 2011. London Math. Soc. Lecture Note Ser. 392.

\bibitem{bousquet2007forest}
Mireille Bousquet-M{\'e}lou and Steve Butler.
\newblock Forest-like permutations.
\newblock {\em Annals of Combinatorics}, 11(3-4):335--354, 2007.

\bibitem{bousquet2020plane}
Mireille Bousquet-M{\'e}lou, {\'E}ric Fusy, and Kilian Raschel.
\newblock Plane bipolar orientations and quadrant walks.
\newblock {\em S\'eminaire Lotharingien de Combinatoire}, B81, 2020.

\bibitem{bouvel2018semi}
Mathilde Bouvel, Veronica Guerrini, Andrew Rechnitzer, and Simone Rinaldi.
\newblock Semi-{B}axter and strong-{B}axter: two relatives of the {B}axter
  sequence.
\newblock {\em SIAM Journal on Discrete Mathematics}, 32(4):2795--2819, 2018.

\bibitem{conant2014number}
Jim Conant and Tim Michaels.
\newblock On the number of tilings of a square by rectangles.
\newblock {\em Annals of Combinatorics}, 18:21--34, 2014.

\bibitem{de1995bipolar}
Hubert De~Fraysseix, Patrice Ossona~de Mendez, and Pierre Rosenstiehl.
\newblock Bipolar orientations revisited.
\newblock {\em Discrete Applied Mathematics}, 56(2-3):157--179, 1995.

\bibitem{denisov2015random}
Denis Denisov and Vitali Wachtel.
\newblock Random walks in cones.
\newblock {\em Annals of Probability}, 43(3):992--1044, 2015.

\bibitem{di1992area}
Giuseppe Di~Battista, Roberto Tamassia, and Ioannis~G Tollis.
\newblock Area requirement and symmetry display of planar upward drawings.
\newblock {\em Discret. Comput. Geom.}, 7(4):381--401, 1992.

\bibitem{ding2020fractal}
Jian Ding and Ewain Gwynne.
\newblock The fractal dimension of {L}iouville quantum gravity: universality,
  monotonicity, and bounds.
\newblock {\em Communications in Mathematical Physics}, 374(3):1877--1934,
  2020.

\bibitem{drmota2022universal}
Michael Drmota, Marc Noy, and Guan-Ru Yu.
\newblock Universal singular exponents in catalytic variable equations.
\newblock {\em Journal of Combinatorial Theory, Series A}, 185:105522, 2022.

\bibitem{elveyprice:hal-01738160}
Andrew Elvey~Price and Mireille Bousquet-M{\'e}lou.
\newblock {The generating function of planar Eulerian orientations}.
\newblock {\em {Journal of Combinatorial Theory, Series A}}, 172:105183, 2020.
\newblock 37 pp.

\bibitem{eppsteinbook}
David Eppstein.
\newblock Regular labelings and geometric structures.
\newblock In {\em Algorithms and Computation}. Springer Berlin Heidelberg,
  2010.

\bibitem{ey16}
Bertrand Eynard.
\newblock Counting surfaces.
\newblock {\em Progress in Mathematical Physics}, 70, 2016.

\bibitem{felsner2004lattice}
Stefan Felsner.
\newblock Lattice structures from planar graphs.
\newblock {\em Electronic journal of combinatorics}, pages R15--R15, 2004.

\bibitem{felsner2010asymptotic}
Stefan Felsner, \'Eric Fusy, and Marc Noy.
\newblock Asymptotic enumeration of orientations.
\newblock {\em Discrete Mathematics and Theoretical Computer Science},
  12(2):249--262, 2010.

\bibitem{fusy2009transversal}
{\'E}ric Fusy.
\newblock Transversal structures on triangulations: A combinatorial study and
  straight-line drawings.
\newblock {\em Discrete Mathematics}, 309(7):1870--1894, 2009.

\bibitem{fusy2010new}
{\'E}ric Fusy.
\newblock New bijective links on planar maps via orientations.
\newblock {\em European Journal of Combinatorics}, 31(1):145--160, 2010.

\bibitem{he1993finding}
Xin He.
\newblock On finding the rectangular duals of planar triangular graphs.
\newblock {\em SIAM Journal on Computing}, 22(6):1218--1226, 1993.

\bibitem{inoue2009counting}
Youhei Inoue, Toshihiko Takahashi, and Ryo Fujimaki.
\newblock Counting rectangular drawings or floorplans in polynomial time.
\newblock {\em IEICE TRANSACTIONS on Fundamentals of Electronics,
  Communications and Computer Sciences}, 92(4):1115--1120, 2009.

\bibitem{kant1997regular}
Goos Kant and Xin He.
\newblock Regular edge labeling of 4-connected plane graphs and its
  applications in graph drawing problems.
\newblock {\em Theoretical Computer Science}, 172(1-2):175--193, 1997.

\bibitem{kazakov1996almost}
Vladimir~A. Kazakov, Matthias Staudacher, and Thomas Wynter.
\newblock Almost flat planar diagrams.
\newblock {\em Communications in mathematical physics}, 179(1):235--256, 1996.

\bibitem{kazakov1996exact}
Vladimir~A. Kazakov, Matthias Staudacher, and Thomas Wynter.
\newblock Exact solution of discrete two-dimensional {$R^2$} gravity.
\newblock {\em Nuclear Physics B}, 471(1-2):309--333, 1996.

\bibitem{kelly1975planar}
David Kelly and Ivan Rival.
\newblock Planar lattices.
\newblock {\em Canadian Journal of Mathematics}, 27(3):636--665, 1975.

\bibitem{kelly1982dimension}
David Kelly and William~T Trotter.
\newblock Dimension theory for ordered sets.
\newblock In {\em Ordered sets}, pages 171--211. Springer, 1982.

\bibitem{kenyon2019bipolar}
Richard Kenyon, Jason Miller, Scott Sheffield, and David~B. Wilson.
\newblock Bipolar orientations on planar maps and $\mathrm{SLE}_{12}$.
\newblock {\em Annals of Probability}, 47(3):1240--1269, 2019.

\bibitem{li2017schnyder}
Yiting Li, Xin Sun, and Samuel~S. Watson.
\newblock Schnyder woods, $\mathrm{SLE}_{16}$, and {L}iouville quantum gravity.
\newblock {\em arXiv preprint arXiv:1705.03573}, 2017.

\bibitem{loll2015locally}
Renate Loll and Ben Ruijl.
\newblock Locally causal dynamical triangulations in two dimensions.
\newblock {\em Physical Review D}, 92(8):084002, 2015.

\bibitem{Mi14}
Gr\'egory Miermont.
\newblock Aspects of random maps.
\newblock {\em Saint-Flour lecture notes}, 2014.

\bibitem{reading2012generic}
Nathan Reading.
\newblock Generic rectangulations.
\newblock {\em European Journal of Combinatorics}, 33(4):610--623, 2012.

\bibitem{saito2012two}
Masashi Saito and Shin-ichi Nakano.
\newblock Two compact codes for rectangular drawings with degree four vertices.
\newblock In {\em Proc. 11th Forum on Information Technology (FIT 2012)},
  volume~1, pages 1--8. Citeseer, 2012.

\bibitem{schaeffer-book}
Gilles Schaeffer.
\newblock Planar maps.
\newblock In {\em Handbook of Enumerative Combinatorics}. CRC Press, 2015.

\bibitem{schnyder1990embedding}
Walter Schnyder.
\newblock Embedding planar graphs on the grid.
\newblock In {\em Proceedings of the first annual ACM-SIAM symposium on
  Discrete algorithms}, pages 138--148, 1990.

\bibitem{oeis}
Neil J.~A. Sloane and The OEIS~Foundation Inc.
\newblock The on-line encyclopedia of integer sequences, 2023.
\newblock Published electronically at \texttt{http://oeis.org}.

\bibitem{takahashi2014compact}
Toshihiko Takahashi.
\newblock A compact code for rectangular drawings with degree four vertices.
\newblock {\em Journal of information processing}, 22(4):634--637, 2014.

\bibitem{takahashi20094n}
Toshihiko Takahashi, Ryo Fujimaki, and Youhei Inoue.
\newblock A (4n-4)-bit representation of a rectangular drawing or floorplan.
\newblock In {\em International Computing and Combinatorics Conference}, pages
  47--55. Springer, 2009.

\bibitem{Tutte63}
William~T. Tutte.
\newblock A census of planar maps.
\newblock {\em Canadian Journal of Mathematics}, 15:249--271, 1963.

\end{thebibliography}




\end{document}